\newcommand\reallywidehat[1]{%
	\savestack{\tmpbox}{\stretchto{%
			\scaleto{%
				\scalerel*[\widthof{\ensuremath{#1}}]{\kern-.6pt\bigwedge\kern-.6pt}%
				{\rule[-\textheight/2]{1ex}{\textheight}}%WIDTH-LIMITED BIG WEDGE
			}{\textheight}%
		}{0.5ex}}%
	\stackon[1pt]{#1}{\tmpbox}%
}
\DeclarePairedDelimiter{\floor}{\lfloor}{\rfloor}
\newlength{\parenheight}
\newlength{\parendepth}
\newlength{\parendrop}
\newcommand{\paren}[4]{%
	\settoheight{\parenheight}{\(#4 #2\)}%
	\settodepth{\parendepth}{\(#4 #2\)}
	\addtolength{\parendepth}{.5ex}
	\addtolength{\parenheight}{-.5ex}
	\addtolength{\parenheight}{\parendepth}
	\addtolength{\parendepth}{-.5\parenheight}
	\setlength{\parendrop}{-.5\parenheight}
	\addtolength{\parendrop}{.5ex}
	\raisebox{-\parendepth}{\(#4
		\left#1%
		\rule[\parendrop]{0pt}{\parenheight}%
		\right.\)}
	#2
	\raisebox{-\parendepth}{\(#4
		\left.%
		\rule[\parendrop]{0pt}{\parenheight}%
		\right#3\)}
}
\def\myleft#1#2\myright#3{%
	\mathchoice{%
		\paren{#1}{#2}{#3}{\displaystyle}%
	}{%
		\paren{#1}{#2}{#3}{\textstyle}%
	}{%
		\paren{#1}{#2}{#3}{\scriptstyle}%
	}{%
		\paren{#1}{#2}{#3}{\scriptscriptstyle}%
	}%
}
\numberwithin{equation}{section}
\newtheorem{theorem}{Theorem}
\numberwithin{theorem}{section}
\newtheorem{lemma}[theorem]{Lemma}
\newtheorem{proposition}[theorem]{Proposition}
\newtheorem{corollary}[theorem]{Corollary}
\newtheorem*{conjecture}{Conjecture}
\theoremstyle{remark}
\newtheorem*{remark}{Remark}
\theoremstyle{definition}
\newtheorem{definition}[theorem]{Definition}
\setlist[enumerate]{leftmargin=*, listparindent=\parindent, parsep=0pt, font=\upshape, label=\alph*)} % Remove left margin from enumerate
\setlist[itemize]{leftmargin=*} % Remove left margin from itemize
\newcommand{\Z}{\mathbb{Z}}
\newcommand{\N}{\mathbb{N}}
\newcommand{\R}{\mathbb{R}}
\newcommand{\C}{\mathbb{C}}
\newcommand{\im}{\operatorname{Im}}
\newcommand{\sgn}{\operatorname{sgn}}
\newcommand{\z}{\mathfrak{z}}
\newcommand{\x}{\mathbbm x}
\newcommand{\y}{\mathbbm y}
\renewcommand{\H}{\mathbb{H}}
\newcommand{\SL}{\text{\rm SL}}
\newcommand{\vast}{\bBigg@{3.5}}
\newcommand{\Vast}{\bBigg@{4.5}}
\newcommand{\VVast}{\bBigg@{6}}
\renewcommand{\b}[1]{\boldsymbol{#1}}
\newcommand{\lf}{\left\lfloor}
\newcommand{\rf}{\right\rfloor}
\def\H{\mathbb{H}}
\begin{document}
%%%%%%%%%%%%%%%%%%%%%%%%%%%%%%%%%%%%%%%%%%%%%%%%%%%%%%%%%%%%%%%%%%%%%%%%
%%%%%%%%%%%%%%%%%%%%%%%%%%     Macros      %%%%%%%%%%%%%%%%%%%%%%%%%%%%%
%%%%%%%%%%%%%%%%%%%%%%%%%%%%%%%%%%%%%%%%%%%%%%%%%%%%%%%%%%%%%%%%%%%%%%%%

%%%%%%%%%%%%%%%%%%%%%%%%%%%%%%%%%%%%%%%%%%%%%%%%%%%%%%%%%%%%%%%%%%%%%%%%
%%%%%%%%%%%%%%%%%%%%%%%    Text of paper    %%%%%%%%%%%%%%%%%%%%%%%%%%%%
%%%%%%%%%%%%%%%%%%%%%%%%%%%%%%%%%%%%%%%%%%%%%%%%%%%%%%%%%%%%%%%%%%%%%%%%
%The following title can be changed according to needs.%
\title[Generating functions of planar polygons]{Generating functions of planar polygons from homological mirror symmetry of elliptic curves}
\author{Kathrin Bringmann}
\author{Jonas Kaszian}
\author{Jie Zhou}

\address{Kathrin Bringmann, University of Cologne, Department of Mathematics and Computer Science, Weyertal 86-90, 50931 Cologne, Germany}
\email{kbringma@math.uni-koeln.de}

\address{Jonas Kaszian, Max-Planck-Institut f\"ur Mathematik, Vivatsgasse 7, 53111 Bonn, Germany}
\email{jonask@mpim-bonn.mpg.de}

\address{Jie Zhou, Yau Mathematical Sciences Center, Tsinghua University, Beijing 100084, P.\,R. China}
\email{jzhou2018@mail.tsinghua.edu.cn}

\keywords{elliptic curves, generating functions, homological mirror symmetry, Jacobi forms, mock theta functions}
\subjclass[2010]{11F12, 11F37, 11F50, 14N35, 53D37}
\maketitle

%\begin{abstract}

%\end{abstract}

\setcounter{tocdepth}{4}

\begin{abstract}
We study generating functions
of certain shapes of planar polygons arising from homological mirror symmetry of elliptic curves.
We express these generating functions in terms of rational functions of the Jacobi theta function and Zwegers' mock theta function and determine their (mock) Jacobi properties. We also analyze their special values and singularities, which are of geometric interest as well.
\end{abstract}

\section{Introduction and statement of results}

Elliptic curves provide a fertile ground for
the study of the homological mirror symmetry conjecture \cite{Kon:1994}, which relates interesting
algebraic structures occurring in the symplectic geometry and complex geometry of different manifolds.
They are very simple manifolds that nevertheless exhibit surprisingly rich connections to many fields including Hodge theory, modular forms, and mathematical physics.

Of central importance in this subject are
the generating functions arising from the open Gromov-Witten theory of elliptic curves.
They give the structure constants for the  $A_{\infty}$-structure 
(i.e., the homotopy version of associative algebra structure) in the Fukaya category (whose objects are Lagrangian submanifolds carrying vector bundles over them, and whose morphisms concern
relations among the vector bundles). On the one hand, having a clear understanding of these functions is very useful to verify ideas and conjectures in homological mirror symmetry for elliptic curves and even for more general manifolds.
On the other hand, these functions frequently exhibit transformation properties of mock modular forms and Jacobi forms that are interesting to study on their own. Specifically, they provide natural examples of mock modular forms of higher depth. {\it Mock modular forms} are holomorphic parts of so-called {\it harmonic Maass forms}, which are non-holomorphic generalizations of modular forms. Higher depths forms require additional differential operators.
The generating functions arising in this context are very concrete objects and can be expressed using elementary geometric objects.
By definition they enumerate holomorphic disks on elliptic curves bounded by a given set of Lagrangians, with appropriate weights specified for example by
the area of the holomorphic disks.
Due to the simplicity of the universal cover of the elliptic curve, the
Lagrangians are represented by straight lines on the universal cover,
holomorphic disks are then represented by polygons whose edges lie on these straight lines. This allows the reduction of the
enumeration of these geometrical objects to a combinatorial problem.
The resulting generating functions may then be written down and turn out to be indefinite theta functions
\cite{Pol:2000, Pol:20011, Pol:20012, Pol:2005}, see also \cite{BHLW,HLN}.
In particular, it was found in \cite{Pol:20011} that
the enumeration of triangles yields Jacobi theta functions.
The enumeration of parallelograms \cite{Pol:20011, Pol:20012, Pol:2005} gives the G\"ottsche-Zagier series \cite{Gottsche:1998}, while that of
more general shapes of 4-gons give
the Appell-Lerch sums studied by Kronecker that describe sections of rank two vector bundles on the elliptic curve as shown by \cite{Pol:20013}.
Interestingly, while the former only involves the usual Jacobi theta functions, the latter are related to the
mock theta functions.

Recently there also have been some works considering the genus zero open Gromov-Witten invariants of the quotient of elliptic curves called elliptic orbifolds
\cite{BKR,BRZ,CHKL, CHL,Lau:2015}.
A detailed study of the mock modularity of some generating functions arising from this context was performed in \cite{BKR, BRZ, Lau:2015}.
We remark that the objects studied in the present work differ from those in the above mentioned papers in that the occurring generating functions are different: the former mainly works with fixed Lagrangians, while in the present work
deformations of the Lagrangians are considered as set up originally in \cite{Pol:20011}.

In this paper, we follow the lines in \cite{Pol:20011, Pol:20012, Pol:2005} and study the generating functions arising from the enumeration of particular shapes of 4-gons and 5-gons. The main result of this paper is the following (see \eqref{f3} and \eqref{f4} for the generating functions and Theorem \ref{theoremPart1} and Theorem \ref{theoremPart2} for the mock Jacobi properties).
\begin{theorem}\label{mainthm}
The functions $f_3$ and $f_4$ satisfy mock Jacobi properties.
\end{theorem}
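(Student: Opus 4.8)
The plan is to deduce Theorem~\ref{mainthm} from the two more precise statements, Theorem~\ref{theoremPart1} and Theorem~\ref{theoremPart2}, which treat $f_3$ and $f_4$ separately; essentially all of the work goes into producing closed-form expressions for the generating functions \eqref{f3} and \eqref{f4} in terms of classical (mock) Jacobi objects, after which the transformation laws are inherited.

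First I would set up the combinatorial model. Each admissible holomorphic disk --- a $4$-gon in one case, a $5$-gon in the other --- whose edges lie on the straight-line lifts of the chosen Lagrangians is recorded by a lattice point in a cone whose walls are fixed by the slopes of those lines, weighted by $q$ to the (signed) area and by elliptic variables tracking the boundary intersection data. Summing the geometric series in the unbounded directions of the cone turns the bare lattice-point count into an indefinite theta series --- of signature $(1,1)$ for the simpler function and a higher-rank indefinite sum for the other --- and carrying this out carefully, in particular keeping track of the contributions of lattice points lying on the walls, is precisely the derivation of the explicit series \eqref{f3} and \eqref{f4}.

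Next I would rewrite these series as rational functions of the Jacobi theta function $\vartheta$ and Zwegers' $\mu$-function. The mechanism is the standard one: an indefinite sum attached to a $(1,1)$-cone, split along its diagonal and then resolved by partial fractions in one summation variable, telescopes into a combination of Appell--Lerch sums $\mu$ and ordinary quotients of $\vartheta$; for the $5$-gon function one iterates this reduction, obtaining a nested, depth-two expression. Once the closed forms of Theorem~\ref{theoremPart1} and Theorem~\ref{theoremPart2} are in hand, the mock Jacobi properties follow by transport of structure: since $\vartheta$ is a Jacobi form and Zwegers' completion $\widehat\mu$ is a non-holomorphic Jacobi form of weight $\tfrac12$, a rational expression in them is a mock Jacobi form whose completion is obtained by the substitution $\mu \mapsto \widehat\mu$ (in the depth-two case the completion instead involves a double Eichler-type integral, and one reads off the shadow as essentially a product of a unary theta series with a mock modular form), with the elliptic transformation behavior and multiplier system descending from those of $\vartheta$ and $\mu$.

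The main obstacle I anticipate is the interplay between the completion procedure and the \emph{poles}: rational functions of theta functions are meromorphic in the elliptic variables, so one must check that the non-holomorphic corrections create no spurious singularities and that the wall/boundary terms dropped during the cone decomposition are genuinely accounted for. A second delicate point is the depth-two function: one must confirm that the twice-iterated indefinite sum really assembles into a depth-two mock Jacobi form, which amounts to controlling the error terms produced at each of the two partial-fraction steps and checking that the resulting shadow has the expected factorized shape rather than something more complicated.
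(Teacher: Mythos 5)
Your plan follows essentially the same route as the paper: $f_3$ is resolved into a single Appell function $A$ (Proposition \ref{f3Appell}) and $f_4$, after the symmetrization $f_4(\b z)=g_4(\b z)+g_4(-z_1,-z_2,z_3)$, into the rank-three sum $F^*$ written as products of $\mu$-functions and theta quotients (Proposition \ref{lemma} and Lemma \ref{lem51}), after which the completions $\widehat A$, $\widehat\mu$ are substituted in and the Jacobi transformation laws of Theorems \ref{theoremPart1} and \ref{theoremPart2} are inherited, exactly as you propose. The only cosmetic differences are that the paper phrases the depth-two completion simply as replacing each $\mu$ by $\widehat\mu$ in the product formula (so that $f_4$ is a \emph{sum of products} of mock Jacobi forms) rather than via an explicit double Eichler integral and shadow computation, and the boundary/wall issues you flag are handled there by the $H$ versus $H^*$ conventions and the genericity conditions $y_j\notin\Z v$.
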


A careful analysis of the modular behavior of the generating functions reveals the global properties of the
Gromov-Witten theory on the geometric side.
Moreover, the study of special values and singularities can be used to detect
what happens in the geometric context, which are otherwise very hard to approach (for example, when the Lagrangians do not intersect transversally).
While the study of these very special shapes are already interesting,
we hope to extend our investigation to include more general shapes of 5-gons and 6-gons in future work.

The paper is organized as follows. In Section 2 we provide some preliminary results and conventions on Jacobi theta functions and mock theta functions of Zwegers.
In Section 3 we review the geometric construction of the generating functions.
We then study the generating functions case by case in Sections 4 to 7.
We conclude with some discussions and a conjecture in the final section.
\section*{Acknowledgments} The research of the first author is supported by the Alfried Krupp Prize for Young University Teachers of the Krupp foundation
and the research of all three authors was supported by the Deutsche Forschungsgemeinschaft (German Research Foundation) under the Collaborative Research Centre / Transregio (CRC/TRR 191) on Symplectic Structures in Geometry, Algebra and Dynamics. The authors thank Chris Jennings-Shaffer for helpful comments on an earlier version of this paper and the referees for their helpful comments.

	\section{Preliminaries}
In this section we recall some modular forms and generalizations thereof, which we require for this paper. Note that we frequently suppress $\tau$ in the notation of functions $f:\C^N\times \mathbb{H}\to \C, (\b{z},\tau)\mapsto f(\b{z})=f(\b{z};\tau)$ if it is viewed as fixed. We write real and imaginary parts as $\tau=u+iv\in \C$, $\b{z}=\b{x}+i\b{y}\in \C^N$ and frequently use $q:=e^{2\pi i \tau}$, $\zeta:=e^{2\pi i z}$, and $\zeta_j:=e^{2\pi i z_j}$ for $j\in\N$. The \emph{Dedekind eta function}
\begin{align*}
\eta(\tau):=q^{\frac1{24}} \prod_{n=1}^{\infty} \left(1-q^n\right)
\end{align*}
is a modular form of weight $\frac12$ with multiplier
\begin{align*}
\nu_{\eta}\left(\begin{smallmatrix}
a & b \\ c & d
\end{smallmatrix}\right):=\begin{cases}
\left(\frac{d}{\lvert c\rvert }\right) e^{\frac{\pi i }{12}\left(\left(a+d\right)c-bd\left(c^2-1\right)-3c\right)} &\quad \text{if }c \text{ is odd},\\
\left(\frac{c}{d }\right) e^{\frac{\pi i }{12}\left(ac\left(1-d^2\right)+d\left(b-c+3\right)-3\right)} &\quad \text{if }c \text{ is even},
\end{cases}
\end{align*} 
which means that for $\left(\begin{smallmatrix}
a&b\\c&d
\end{smallmatrix}\right)\in\SL_2(\Z)$ we have
\begin{equation}\label{etamod}
\eta\left(\tfrac{a\tau+b}{c\tau+d}\right)=\nu_{\eta}\left(\begin{smallmatrix}
a&b\\c&d
\end{smallmatrix}\right)(c\tau+d)^{\frac 12} \eta(\tau).
\end{equation}
The \emph{Jacobi theta function} is defined as 
\begin{align*}
\vartheta(z;\tau) := \sum_{n \in\frac12+\Z} q^{\frac{n^2}{2}} e^{2\pi in\left(z+\frac12\right)}= -iq^{\frac 18} \zeta^{-\frac 12}\prod_{n\geq 1} \left(1-q^n\right)\left(1-\zeta q^{n-1}\right)\left(1-\zeta^{-1}q^n\right)
\end{align*}
We require the following properties of $\vartheta$.
\begin{lemma}\label{thetalemma}
	\begin{enumerate}[leftmargin=*, label={\rm (\arabic*)}]
		\item We have
		\begin{align*}
		\vartheta(-z)=-\vartheta(z).
		\end{align*}
		\item For $\ell,m\in\Z$, we have
		\begin{align*}
		\vartheta\left(z+\ell\tau+m\right)=(-1)^{\ell+m} q^{-\frac{\ell^2}{2}}\zeta^{-\ell}\vartheta\left(z\right).
		\end{align*}
		
		\item We have	
		\begin{equation*}\label{Etquot2}
		\frac{\eta^3}{\vartheta\left(\tfrac12\right)\vartheta\left(\tfrac{\tau}{2}\right)} =-\tfrac{i}{2}q^{\frac14}\vartheta\left(\tfrac{\tau}{2}-\tfrac12\right).
		\end{equation*}			
		
		\item We have for $\left(\begin{smallmatrix}
		a&b\\c&d
		\end{smallmatrix}\right)\in\SL_2(\Z)$
		\begin{equation*}
		\vartheta\left(\tfrac{z}{c\tau+d};\tfrac{a\tau+b}{c\tau+d}\right)=\nu_{\eta}^{3}\left(\begin{smallmatrix}
		a&b\\c&d
		\end{smallmatrix}\right)(c\tau+d)^{\frac 12} e^{\frac{\pi i c z^2}{c\tau+d}}\vartheta(z;\tau).
		\end{equation*}
	\end{enumerate}	
\end{lemma}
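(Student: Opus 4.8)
The plan is to establish the four items in turn, the first three by formal manipulations of the series or of the infinite product, and the last by reduction to the generators of $\SL_2(\Z)$.

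For part (1), I would substitute $n\mapsto -n$ in the series defining $\vartheta(-z)$; since $-\tfrac12-\Z=\tfrac12+\Z$ the range of summation is unchanged, and the surviving phase $e^{-2\pi i n}=-1$, valid for every $n\in\tfrac12+\Z$, yields the overall minus sign. For part (2), insert $z\mapsto z+\ell\tau+m$ into the series: the integral shift multiplies the $n$-th term by $e^{2\pi i nm}=(-1)^m$ (again because $n\in\tfrac12+\Z$), while for the shift by $\ell\tau$ one completes the square, $q^{\frac{n^2}2}q^{n\ell}=q^{-\frac{\ell^2}2}q^{\frac{(n+\ell)^2}2}$, and then reindexes $n\mapsto n-\ell$, which is legitimate since $\ell\in\Z$; this produces the factor $q^{-\frac{\ell^2}2}e^{-2\pi i\ell(z+\frac12)}=(-1)^{\ell}q^{-\frac{\ell^2}2}\zeta^{-\ell}$. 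Multiplying the two contributions gives the stated identity.

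For part (3), I would plug $z=\tfrac12$, $z=\tfrac\tau2$ and $z=\tfrac\tau2-\tfrac12$ into the product expansion of $\vartheta$, noting that the elementary prefactor $-iq^{\frac18}\zeta^{-\frac12}$ takes the values $-q^{\frac18}$, $-iq^{-\frac18}$ and $q^{-\frac18}$ respectively and that $\prod_{n\geq1}(1+q^{n-1})=2\prod_{n\geq1}(1+q^n)$, together with $\eta^3=q^{\frac18}\prod_{n\geq1}(1-q^n)^3$. Writing everything out and cancelling, the asserted identity reduces to $\prod_{n\geq1}(1+q^n)^2(1-q^{2n-1})^2=1$ (after using $(1-q^{n-\frac12})(1+q^{n-\frac12})=1-q^{2n-1}$), which is the square of Euler's identity $\prod_{n\geq1}(1+q^n)=\prod_{n\geq1}(1-q^{2n-1})^{-1}$.

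For part (4), I would reduce to the generators $T=\left(\begin{smallmatrix}1&1\\0&1\end{smallmatrix}\right)$ and $S=\left(\begin{smallmatrix}0&-1\\1&0\end{smallmatrix}\right)$ of $\SL_2(\Z)$: since the assignment $\gamma\mapsto\nu_\eta^3(\gamma)(c\tau+d)^{\frac12}e^{\pi i cz^2/(c\tau+d)}$ is the standard automorphy factor of $\eta^3$ and hence consistent under composition, it suffices to verify the transformation for $T$ and $S$. For $T$ the series gives $q^{\frac{n^2}2}\mapsto e^{\pi i n^2}q^{\frac{n^2}2}=e^{\frac{\pi i}4}q^{\frac{n^2}2}$ for $n\in\tfrac12+\Z$, matching $\nu_\eta^3(T)=e^{\frac{\pi i}4}$. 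The only genuine computation is the $S$-transformation: applying the Poisson summation formula to $\vartheta(z/\tau;-1/\tau)$ produces $\nu_\eta^3(S)\,\tau^{\frac12}e^{\frac{\pi iz^2}\tau}\vartheta(z;\tau)$, where the constant $\nu_\eta^3(S)=e^{-\frac{3\pi i}4}$ appears through the branch of $(-i\tau)^{\frac12}$ for $\tau\in\H$. Alternatively, all four statements are classical and may simply be quoted, for instance from Zwegers' thesis. I expect this $S$-transformation to be the only delicate point — it is entirely standard, but requires care with the square-root branch so that the eta-multiplier $\nu_\eta^3$ emerges exactly, which is precisely why passing to the generators, where that branch is unambiguous, keeps the argument clean.
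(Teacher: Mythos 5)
Your proofs of all four parts are correct: the reindexings in (1) and (2), the reduction of (3) to the square of Euler's identity $\prod_{n\geq1}(1+q^n)(1-q^{2n-1})=1$, and the generator argument for (4) with the correct values $\nu_\eta^3(T)=e^{\frac{\pi i}{4}}$ and $\nu_\eta^3(S)=e^{-\frac{3\pi i}{4}}$ all check out. The paper gives no proof of this lemma at all --- it quotes these as classical facts, exactly as your closing remark anticipates (they appear in Zwegers' thesis) --- so your write-up simply supplies the standard details the paper omits.
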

\begin{remark}
	Lemma \ref{thetalemma} (2), (4) imply that $\vartheta$ transforms like a Jacobi form of weight $\frac 12$ and index $\frac 12$ for $\SL_2(\Z)$ with multiplier $\nu_{\eta}^3$.
\end{remark}	
Furthermore, we use the following higher-dimensional generalization of Jacobi forms.
\begin{definition}
	Let $f:\C^r\times \H\to \C$ be a meromorphic function with possible poles in $\b{z}\in \C^r$. We call $f$ a \emph{meromorphic Jacobi form} of weight $k$ and index $M\in \frac12\Z^{r\times r}$ for the subgroup $\Gamma\subset\SL_2(\Z)$ if it satisfies for some $a>0$ the growth condition 
	\begin{align*}
	f(\b{z};\tau)e^{-\frac{4\pi}{v}\b{y}^TM\b{y}}\in O\left(e^{av}\right) \quad\text{as }v\to \infty,
	\end{align*}
	for $\b{z}\in \C^r,\b{\ell},\b{m}\in \Z^r$ the elliptic transformation 
	\begin{align*}
	f(\b{z}+\b{\ell}\tau+\b{m})= e^{-4\pi i \b{z}^TM \b{\ell}} q^{-\b{\ell}^TM\b{\ell}} f(\b{z})
	\end{align*}
	and for $\left(\begin{smallmatrix}
	a & b \\ c &d 
	\end{smallmatrix}\right) \in \Gamma$ the modular transformation
	\begin{align*}
	f\left(\tfrac{\b{z}}{c\tau+d};\tfrac{a\tau+b}{c\tau+d}\right)=(c\tau+d)^k e^{\frac{2\pi i c}{c\tau+d}\b{z}^TM \b{z}} f(\b{z};\tau).
	\end{align*}
	Both transformation identities can be modified with some multiplier. If $f$ is holomorphic on all of $\C^r\times \H$ and 
	$f(\b{z};\tau)e^{-\frac{4\pi}{v}\b{y}^TM\b{y}}$ is bounded as $v\to \infty$
	, we call it \emph{holomorphic Jacobi form}.
\end{definition}
We call a meromorphic function $f:\C^r\times \H\to \C$ (with possible poles in the $\b{z}$-variable) a \emph{mock Jacobi form} of weight $k$ and index $M\in \frac12\Z^{r\times r}$ for the subgroup $\Gamma\subset \SL_2(\Z)$ if it can be completed in the sense of \cite{BR, DMZ} to a function that transforms as a Jacobi form of the same weight, index, and subgroup (and possibly multiplier).

Next recall Lemma 2.3 of \cite{BRZ}, which states the following.
\begin{lemma}\label{2.3}
	We have, for $0<y_{1},y_{2}<v$
	\begin{equation*}
	%\label{Feq}
	\sum_{n\in \mathbb{Z}} \frac{\zeta_{1}^{n}}{1-\zeta_{2} q^{n}}=-i \eta^{3}\frac{\vartheta(z_{1}+z_{2}) }{\vartheta(z_{1}) \vartheta(z_{2})}.
	\end{equation*}	
	
\end{lemma}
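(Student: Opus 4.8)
The plan is to regard both sides as meromorphic functions of $z_1$, with $z_2$ and $\tau$ fixed and generic, and to invoke the classification of elliptic functions of the third kind. The first step is to rewrite the left-hand side so that it extends to all $z_1$. Splitting the sum into the ranges $n\ge 0$ and $n\le -1$ and expanding each summand as a geometric series — which is legitimate precisely because $0<y_1,y_2<v$ makes every series that occurs absolutely convergent — gives
\begin{align*}
\sum_{n\in\Z}\frac{\zeta_1^n}{1-\zeta_2 q^n}=\sum_{n,m\ge 0}\zeta_1^n\zeta_2^m q^{nm}-\sum_{n,m\ge 1}\zeta_1^{-n}\zeta_2^{-m}q^{nm}.
\end{align*}
Carrying out the $n$-summation in each double sum and simplifying the resulting geometric series turns the right-hand side into the pleasantly symmetric bilateral series
\begin{align*}
\phi(z_1,z_2):=\sum_{m\in\Z}\frac{\zeta_2^m}{1-\zeta_1 q^m}.
\end{align*}
The essential gain is that this series converges absolutely whenever $0<y_2<v$, for \emph{every} $z_1$ avoiding $\Z\tau+\Z$; hence $\phi$ is meromorphic in $z_1\in\C$, has exactly a simple pole at each point of $\Z\tau+\Z$, and agrees with the original series on the strip $0<y_1<v$.

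Next I would compare $\phi$ with $R(z_1,z_2):=-i\eta^3\vartheta(z_1+z_2)/(\vartheta(z_1)\vartheta(z_2))$ as functions of $z_1$. Clearly $\phi(z_1+1,z_2)=\phi(z_1,z_2)$, and replacing $\zeta_1\mapsto\zeta_1 q$ and shifting the summation index by one gives $\phi(z_1+\tau,z_2)=\zeta_2^{-1}\phi(z_1,z_2)$. By Lemma \ref{thetalemma} (2) the function $R$ satisfies exactly the same two transformation laws in $z_1$. Since $\vartheta(z_1)$ has a simple zero at each lattice point, $\vartheta(z_1)\phi(z_1,z_2)$ is entire in $z_1$, and it transforms under $z_1\mapsto z_1+1$ and $z_1\mapsto z_1+\tau$ exactly as $\vartheta(z_1+z_2)$ does; therefore the quotient $\vartheta(z_1)\phi(z_1,z_2)/\vartheta(z_1+z_2)$ is genuinely elliptic in $z_1$ with at most one simple pole per period parallelogram. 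An elliptic function cannot have a single simple pole, so this quotient is constant in $z_1$; that is, $\phi(z_1,z_2)=c(z_2)\,\vartheta(z_1+z_2)/\vartheta(z_1)$ for some $c(z_2)$ independent of $z_1$.

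It remains to identify $c(z_2)$ by matching residues at $z_1=0$. Near $z_1=0$ only the $m=0$ term of $\phi$ is singular, and $1/(1-\zeta_1)$ there has residue $-1/(2\pi i)$; on the other hand the product expansion of $\vartheta$ together with the definition of $\eta$ gives $\vartheta'(0)=-2\pi\eta^3$, so $c(z_2)\,\vartheta(z_1+z_2)/\vartheta(z_1)$ has residue $c(z_2)\vartheta(z_2)/(-2\pi\eta^3)$ at $z_1=0$. Equating the two yields $c(z_2)=-i\eta^3/\vartheta(z_2)$, hence $\phi=R$; restricting to $0<y_1<v$, where $\phi$ coincides with the original series, gives the claim (alternatively one proves the identity for generic $z_2$ and concludes by continuity). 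I expect the only delicate point to be the bookkeeping in the first step — justifying the rearrangements and the passage from the conditionally-defined original sum to the everywhere-in-$z_1$-convergent series $\phi$ — after which the elliptic-function argument is entirely routine.
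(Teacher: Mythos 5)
Your argument is correct, but there is nothing in the paper to compare it against: the paper does not prove this identity, it simply imports it as Lemma 2.3 of \cite{BRZ}. Your proof is the classical Kronecker-series argument and all the steps check out: the geometric-series expansion is justified by $0<y_1,y_2<v$, the resulting double sum is visibly symmetric in $(z_1,z_2)$, the function $\phi$ does converge for all $z_1\notin\Z\tau+\Z$ once $0<y_2<v$, both sides have multiplier $1$ under $z_1\mapsto z_1+1$ and $\zeta_2^{-1}$ under $z_1\mapsto z_1+\tau$ (consistent with Lemma \ref{thetalemma} (2)), the quotient $\vartheta(z_1)\phi/\vartheta(z_1+z_2)$ is elliptic with at most one simple pole per cell and hence constant, and the normalizations $\mathrm{Res}_{z_1=0}\,\tfrac{1}{1-\zeta_1}=-\tfrac{1}{2\pi i}$ and $\vartheta'(0)=-2\pi\eta^3$ give exactly $c(z_2)=-i\eta^3/\vartheta(z_2)$. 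One small simplification you could make: the symmetrization to $\phi$ is not actually needed, because the original series $\sum_{n}\zeta_1^n/(1-\zeta_2q^n)$ already converges for \emph{all} $z_2\notin\Z\tau+\Z$ as soon as $0<y_1<v$ (the tails are controlled by $e^{-2\pi ny_1}$ and $e^{2\pi n(y_1-v)}$ respectively), so you can run the same elliptic-function argument directly in the variable $z_2$; this avoids the only genuinely delicate bookkeeping step you flagged.
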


Furthermore, we require the Appell functions 
\begin{equation*}
A\left(z_1,z_2;\tau\right):= e^{\pi i z_1} \sum_{n\in\Z} \frac{(-1)^n q^{\frac{n(n+1)}{2}}e^{2\pi i n z_2}}{1-e^{2\pi i z_1}q^{n}},\quad
\mu\left(z_1,z_2;\tau\right):= \frac{A(z_1,z_2;\tau)}{\vartheta(z_2;\tau)} .
\end{equation*}
We recall some properties of $A$ and $\mu$ that can be easily deduced from Proposition 1.4 of \cite{Zw}. In part (4) we moreover state a consequence of Lemma \ref{lemShift} (2) for $z_0=-z-\tfrac12$, $z_1=z$, and $z_2=z-\frac{\tau}{2}+\frac12$.

\begin{lemma}\label{lemShift}
	Let $z,z_0,z_1,z_2\in \C\setminus (\Z\tau +\Z)$ and $\ell\in\Z$.
	\begin{enumerate}[leftmargin=*, label={\rm (\arabic*)}]
		\item 	We have
		\begin{equation*}
		\mu\left(z_1+\tau,z_2+\tau\right)=\mu\left(z_1,z_2\right),\quad
		A\left(z_1+\ell\tau,z_2+\ell\tau\right)=
		%\vartheta\left(z_2+\ell\tau\right)\mu\left(z_1+\ell\tau,z_2+\ell\tau\right)=
		(-1)^\ell q^{-\frac{\ell^2}{2}}\zeta^{-\ell}_2 A\left(z_1,z_2\right).
		\end{equation*}	
		\item Assuming that $z_1+z_0,z_2+z_0\not\in\Z\tau+\Z$ we have
		\begin{equation*}
		\mu(z_1+z_0,z_2+z_0) = \mu(z_1,z_2) +   \frac{i \eta^3\vartheta(z_1+z_2+z_0)\vartheta(z_0)}{\vartheta(z_1)\vartheta(z_2) \vartheta(z_1+z_0)\vartheta(z_2+z_0)}.
		\end{equation*}
		\item We have
		\begin{align*}
		&\mu(z_1,z_2) + q^{-\frac12}\zeta^{-1}_1\zeta_2\mu(z_1+\tau,z_2) = -i q^{-\frac18}\zeta_1^{-\frac12}\zeta_2^{\frac12},\\
		&\mu(-z_1,-z_2) = \mu(z_2,z_1)=-\mu(z_1+1,z_2)=\mu(z_1,z_2),\qquad \mu\left(\tfrac12,\tfrac{\tau}{2}\right) = -\tfrac12q^{\frac18}.
		\end{align*}
		\item We have
		\begin{align*}
		%{\label{idA}}
		A\left(z,z-\tfrac{\tau}{2}+\tfrac12\right)=-\tfrac12 q^\frac18 \vartheta\left(z-\tfrac{\tau}{2}+\tfrac12\right)+\tfrac12q^{\frac14}\vartheta\left(\tfrac{\tau}{2}-\tfrac12\right)\frac{\vartheta\left(z-\tfrac{\tau}{2}\right)\vartheta\left(z+\tfrac12\right)}{\vartheta(z)}.
		\end{align*}
	\end{enumerate}
\end{lemma}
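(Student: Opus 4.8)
The plan is to obtain all four parts from the defining series $A(z_1,z_2)=e^{\pi i z_1}\sum_{n\in\Z}\frac{(-1)^n q^{n(n+1)/2}\zeta_2^n}{1-\zeta_1 q^n}$ together with the normalization $\mu=A/\vartheta(z_2)$, using Proposition 1.4 of \cite{Zw} for the basic transformation behavior of $\mu$ and the theta identities collected in Lemma \ref{thetalemma}. Parts (1), (2), (3) are the standard elliptic properties of Zwegers' $\mu$-function, lightly repackaged in the present normalization, and part (4) is a direct substitution into part (2) followed by theta bookkeeping.

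For part (1), the identity $\mu(z_1+\tau,z_2+\tau)=\mu(z_1,z_2)$ is one of the elliptic transformations in Proposition 1.4 of \cite{Zw}. The companion statement for $A$ then follows by writing $A(z_1,z_2)=\mu(z_1,z_2)\,\vartheta(z_2)$, iterating the $\mu$-periodicity $\ell$ times, and invoking the quasi-periodicity $\vartheta(z_2+\ell\tau)=(-1)^\ell q^{-\ell^2/2}\zeta_2^{-\ell}\vartheta(z_2)$ from Lemma \ref{thetalemma}(2). Likewise, the first displayed line of part (3) is the $\tau$-shift of $\mu$ in its first argument, and the symmetries $\mu(-z_1,-z_2)=\mu(z_2,z_1)=-\mu(z_1+1,z_2)=\mu(z_1,z_2)$ are the remaining items of Proposition 1.4 of \cite{Zw}. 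The special value $\mu(\tfrac12,\tfrac{\tau}{2})=-\tfrac12 q^{1/8}$ can be read off either by evaluating the defining series at $z_1=\tfrac12$, $z_2=\tfrac\tau2$, or by combining the symmetries just listed with the first-argument shift relation.

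Part (2) is the substantive step. I would fix $z_1,z_2$ and view $F(z_0):=\mu(z_1+z_0,z_2+z_0)-\mu(z_1,z_2)$ and the proposed right-hand side $h(z_0)$ as meromorphic functions of $z_0$. Since $z_0\mapsto z_0+1$ and $z_0\mapsto z_0+\tau$ shift both arguments of $\mu$ equally, the relation $\mu(u+\tau,v+\tau)=\mu(u,v)$ and the consequence $\mu(u+1,v+1)=\mu(u,v)$ (the latter from the sign rule $\mu(z_1+1,z_2)=-\mu(z_1,z_2)$ and the symmetry in part (3)) show that $F$ is invariant under $\Z\tau+\Z$; the quasi-periodicities of $\vartheta$ from Lemma \ref{thetalemma}(2) show that numerator and denominator of $h$ pick up identical automorphy factors, so $h$ is $\Z\tau+\Z$-invariant as well. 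Thus $F-h$ is elliptic in $z_0$ with possible poles only at $z_0\equiv -z_1,-z_2$ modulo $\Z\tau+\Z$; matching the principal parts there (using the explicit residues of $A$ at $\zeta_1 q^n=1$ and the simple zeros of $\vartheta$) shows $F-h$ is holomorphic, hence constant, and evaluation at $z_0=0$, where both sides vanish, forces the constant to be $0$. Alternatively one quotes this shift formula directly from \cite{Zw}; the residue cancellation is the only place where genuine work is needed.

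Finally, part (4) is obtained by specializing part (2) to $z_1=z$, $z_2=z-\tfrac\tau2+\tfrac12$, $z_0=-z-\tfrac12$, for which $z_1+z_0=-\tfrac12$, $z_2+z_0=-\tfrac\tau2$, and $z_1+z_2+z_0=z-\tfrac\tau2$. The left-hand side of (2) becomes $\mu(-\tfrac12,-\tfrac\tau2)=\mu(\tfrac12,\tfrac\tau2)=-\tfrac12 q^{1/8}$ by part (3). Solving for $A(z,z-\tfrac\tau2+\tfrac12)=\mu(z,z-\tfrac\tau2+\tfrac12)\,\vartheta(z-\tfrac\tau2+\tfrac12)$ and simplifying the theta quotient with the oddness $\vartheta(-w)=-\vartheta(w)$ of Lemma \ref{thetalemma}(1) (so that $\vartheta(-z-\tfrac12)=-\vartheta(z+\tfrac12)$ and $\vartheta(-\tfrac12)\vartheta(-\tfrac\tau2)=\vartheta(\tfrac12)\vartheta(\tfrac\tau2)$), and then replacing $\eta^3/(\vartheta(\tfrac12)\vartheta(\tfrac\tau2))$ by $-\tfrac{i}{2}q^{1/4}\vartheta(\tfrac\tau2-\tfrac12)$ via Lemma \ref{thetalemma}(3), collapses the expression to the claimed identity. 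The main obstacle throughout is part (2): once the shift formula is in hand, parts (1), (3), and (4) are routine bookkeeping with Lemma \ref{thetalemma}.
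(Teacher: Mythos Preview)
Your proposal is correct and matches the paper's approach: the paper does not give a formal proof but simply remarks that parts (1)--(3) are easily deduced from Proposition~1.4 of \cite{Zw}, and that part (4) is the consequence of part (2) under the very substitution $z_0=-z-\tfrac12$, $z_1=z$, $z_2=z-\tfrac{\tau}{2}+\tfrac12$ you use. Your additional elliptic-function sketch for part (2) is more than the paper provides, but it is the standard argument behind the cited result and does not constitute a different route.
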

The function $A$ also has a modular completion i.e., adding a (simpler) non-holomorphic piece yields a function $\widehat{A}$ which transforms like a Jacobi form. To be precise, set
\begin{equation*}
\widehat{A}\left(z_1, z_2; \tau\right):= A\left(z_1, z_2; \tau\right)
+\tfrac{i}{2}
\vartheta\left(z_2; \tau\right)R\left(z_1-z_2; \tau\right)
\end{equation*}
with $R(z; \tau):=\sum_{n\in\frac12+\Z}(\sgn(n)-E((n+\frac{y}{v})\sqrt{2v}))
(-1)^{n-\frac12} q^{-\frac{n^2}{2}} e^{-2\pi inz}$. Here $E(x):= 2\int_{0}^{x} e^{-\pi t^2} dt$ denotes the usual {\it error function}. We also define $\widehat{\mu}(z_1,z_2;\tau):=\frac{\widehat{A}(z_1,z_2;\tau)}{\vartheta(z_2;\tau)}$.

The function $\widehat{A}$ transforms as a Jacobi form of weight one and index $\frac 12\left(\begin{smallmatrix}
-1 & 1\\ 1 & 0
\end{smallmatrix}\right)$ as proven in \cite{Zw}.

\begin{lemma}
	\label{2.4altern}
	\begin{enumerate}[leftmargin=* , label={\rm (\arabic*)}]
	\item \label{Appellt} We have, for $\b{\ell},\b{m}\in\Z^2$,
	\begin{align*}
	\widehat{A} \left( \b{z}+\b{\ell}\tau+\b{m} \right) =(-1)^{ \ell_1 + m_1} e^{2\pi i \left(  \ell_1 -\ell_2 \right)z_1 } e^{-2\pi i \ell_1 z_2} q^{\frac{ \ell_1^2}2 - \ell_1 \ell_2} \widehat{A} \left(\b{z} \right).
	\end{align*}
	\item \label{Amod} We have, for $\left(\begin{smallmatrix}a&b\\c&d\end{smallmatrix}\right)\in\SL_2(\Z)$,
	\begin{equation*}
	\widehat{A} \left( \tfrac{z_1}{c \tau +d}, \tfrac{z_2}{c\tau +d}; \tfrac{a\tau+b}{c\tau+d} \right) = (c\tau+d) e^{\frac{\pi i c\left( - z_1^2 +2z_1z_2\right)}{c\tau+d}} \widehat{A} \left(\b{z}; \tau\right).
	\end{equation*}
	\end{enumerate}
\end{lemma}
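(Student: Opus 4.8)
The plan is to reduce both transformation laws to Zwegers' corresponding statements for $\widehat{\mu}$ together with the behaviour of $\vartheta$ recorded in Lemma~\ref{thetalemma}, exploiting the factorization $\widehat{A}(z_1,z_2;\tau)=\vartheta(z_2;\tau)\,\widehat{\mu}(z_1,z_2;\tau)$, which is immediate from the definitions of $\widehat{A}$ and $\widehat{\mu}$. In the present normalization $\widehat{\mu}$ is a (mock) Jacobi form of weight $\tfrac12$ and index $\tfrac12\left(\begin{smallmatrix}-1&1\\1&-1\end{smallmatrix}\right)$ with multiplier $\nu_{\eta}^{-3}$, while $\vartheta(z_2)$ has weight $\tfrac12$, index $\tfrac12\left(\begin{smallmatrix}0&0\\0&1\end{smallmatrix}\right)$ in $(z_1,z_2)$, and multiplier $\nu_{\eta}^{3}$. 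Since weights add, indices add to $\tfrac12\left(\begin{smallmatrix}-1&1\\1&0\end{smallmatrix}\right)$, and the multipliers are inverse to one another, the product $\widehat{A}$ should have weight $1$, the stated index, and trivial multiplier. This matches the prefactors $e^{2\pi i(\ell_1-\ell_2)z_1-2\pi i\ell_1 z_2}q^{\frac{\ell_1^2}{2}-\ell_1\ell_2}$ and $e^{\pi i c(-z_1^2+2z_1z_2)/(c\tau+d)}$ in the statement, so the remainder of the proof is bookkeeping; I now indicate how each piece is obtained.

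For part~(1) it suffices to treat the four generators $z_1\mapsto z_1+1$, $z_2\mapsto z_2+1$, $z_1\mapsto z_1+\tau$, $z_2\mapsto z_2+\tau$ and compose. The integer shifts are elementary: reading off the $q$-series gives $A(z_1+1,z_2)=-A(z_1,z_2)$ and $A(z_1,z_2+1)=A(z_1,z_2)$, while $R(w+1)=-R(w)$ follows since $e^{-2\pi i n}=-1$ for $n\in\tfrac12+\Z$ and the imaginary part of the argument is unchanged; combined with $\vartheta(z_2+1)=-\vartheta(z_2)$ from Lemma~\ref{thetalemma}(2) these yield the claimed signs. For the $\tau$-shift in $z_1$, the mock defect on the right-hand side of the recurrence $\mu(z_1,z_2)+q^{-\frac12}\zeta_1^{-1}\zeta_2\,\mu(z_1+\tau,z_2)=-iq^{-\frac18}\zeta_1^{-\frac12}\zeta_2^{\frac12}$ of Lemma~\ref{lemShift}(3) is cancelled precisely by the matching $\tau$-shift relation $R(w)+e^{-2\pi i w-\pi i\tau}R(w+\tau)=2e^{-\pi i w-\pi i\tau/4}$ for $R$ with $w=z_1-z_2$; this is exactly the identity forcing $\widehat{\mu}(z_1,z_2)+q^{-\frac12}\zeta_1^{-1}\zeta_2\,\widehat{\mu}(z_1+\tau,z_2)=0$. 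Multiplying by $\vartheta(z_2)$, invariant under $z_1\mapsto z_1+\tau$, and using Lemma~\ref{thetalemma}(2) for $z_2\mapsto z_2+\tau$ gives the single-generator versions, and composing reproduces the formula for general $\b{\ell},\b{m}\in\Z^2$.

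For part~(2) I invoke Zwegers' modular transformations of $\widehat{\mu}$: under $\left(\begin{smallmatrix}a&b\\c&d\end{smallmatrix}\right)\in\SL_2(\Z)$ the completed function satisfies $\widehat{\mu}\!\left(\tfrac{z_1}{c\tau+d},\tfrac{z_2}{c\tau+d};\tfrac{a\tau+b}{c\tau+d}\right)=\nu_{\eta}^{-3}\!\left(\begin{smallmatrix}a&b\\c&d\end{smallmatrix}\right)(c\tau+d)^{\frac12}e^{-\frac{\pi i c(z_1-z_2)^2}{c\tau+d}}\widehat{\mu}(z_1,z_2;\tau)$. Multiplying by the transformation of $\vartheta(z_2)$ from Lemma~\ref{thetalemma}(4), the factors $(c\tau+d)^{\frac12}$ combine to $(c\tau+d)$, the multipliers $\nu_{\eta}^{-3}$ and $\nu_{\eta}^{3}$ cancel, and the exponents add as $-\tfrac{\pi i c(z_1-z_2)^2}{c\tau+d}+\tfrac{\pi i c z_2^2}{c\tau+d}=\tfrac{\pi i c(-z_1^2+2z_1z_2)}{c\tau+d}$, which is precisely the prefactor claimed. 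As every element of $\SL_2(\Z)$ is generated by $S$ and $T$ and both transformation laws are compatible with composition (the index exponential satisfying the required cocycle relation), this yields part~(2) for all of $\SL_2(\Z)$; the $T$-case simply combines the factors $e^{\pi i/4}$ from $\vartheta$ and $e^{-\pi i/4}$ from $\widehat\mu$ to leave $\widehat{A}$ invariant.

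The one genuinely nontrivial ingredient, and hence the main obstacle, is the $S$-transformation of the non-holomorphic completion $R$: establishing that adding $\tfrac{i}{2}\vartheta(z_2)R(z_1-z_2)$ repairs the modular anomaly of the mock object $A$ reduces to the modular behaviour of the Mordell-type integral underlying $R$, and this is the analytic heart of \cite{Zw}. Once that input is granted, the elliptic relations for $R$ and $\mu$ from Lemma~\ref{lemShift}, the theta transformations from Lemma~\ref{thetalemma}, and the factorization $\widehat{A}=\vartheta(z_2)\widehat{\mu}$ reduce everything else to the elementary bookkeeping indicated above.
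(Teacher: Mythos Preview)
Your argument is correct. The paper does not give its own proof of this lemma: it simply states, immediately before the lemma, that ``The function $\widehat{A}$ transforms as a Jacobi form of weight one and index $\tfrac12\left(\begin{smallmatrix}-1&1\\1&0\end{smallmatrix}\right)$ as proven in \cite{Zw}'' and records the two formulas. Your proposal unpacks exactly this citation via the factorization $\widehat{A}=\vartheta(z_2)\,\widehat{\mu}$ and the known transformation laws for $\widehat{\mu}$ and $\vartheta$ from \cite{Zw} and Lemma~\ref{thetalemma}, so the two approaches coincide in substance; you are just being more explicit about where in Zwegers' thesis the input lives and how the multipliers and index matrices combine.
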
	
\begin{remark}
	 The function $\widehat{\mu}$ transforms like a Jacobi form of weight $\frac12$ and index $\frac 12\left(\begin{smallmatrix}
	-1 & 1\\ 1 & -1
	\end{smallmatrix}\right)$ (with multiplier).
\end{remark}

Furthermore, we let
\begin{multline}
F(\b z;\tau)\\
:= q^{-\frac18}\zeta_1^{-\frac12} \zeta_2^{\frac12} \zeta_3^{\frac12} \left(\sum_{\b n\in\N_0\times \N^2} + \sum_{\b n\in\N_0\times(-\N)^2}\right) (-1)^{n_1} q^{\frac{n_1(n_1+1)}{2} + n_1n_2 +n_1n_3 + n_2 n_3} \zeta_1^{n_1} \zeta_2^{n_2} \zeta_3^{n_3}.\label{defineF}
\end{multline}
Here and throughout we write components of vectors $\b w\in\C^N$ as $w_1,\ldots,w_N$ and $\zeta_j:=e^{2\pi iz_j}$.

Theorem 1.3 of \cite{BRZ} rewrites $F$ in terms of $\mu$ and $\vartheta$.
\begin{lemma}\label{2.4}
	We have for $0<y_2,y_3<v$
	\begin{align*}
	%\label{Fid}
	F(\b z)=i\vartheta(z_1)\mu(z_1,z_2)\mu(z_1,z_3)-\frac{\eta^3\vartheta(z_2+z_3)}{\vartheta(z_2)\vartheta(z_3)}\mu(z_1,z_2+z_3).
	\end{align*}
\end{lemma}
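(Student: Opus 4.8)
The plan is to prove the identity by comparing the $q$-series expansions of both sides and matching coefficients, exploiting the fact that $\mu$ already has a known series representation. First I would expand the right-hand side using the definitions. Recall $A(z_1,z_2) = e^{\pi i z_1}\sum_{n\in\Z} \frac{(-1)^n q^{n(n+1)/2} e^{2\pi i n z_2}}{1-e^{2\pi i z_1}q^n}$ and $\mu = A/\vartheta$. Thus $\vartheta(z_1)\mu(z_1,z_2)\mu(z_1,z_3) = \frac{A(z_1,z_2)A(z_1,z_3)}{\vartheta(z_2)\vartheta(z_3)}$, so after multiplying through by $\vartheta(z_2)\vartheta(z_3)$ the claimed identity becomes
\begin{align*}
\vartheta(z_2)\vartheta(z_3) F(\b z) = i A(z_1,z_2) A(z_1,z_3) - \eta^3 \vartheta(z_2+z_3)\, \frac{\vartheta(z_2)\vartheta(z_3)}{\vartheta(z_2)\vartheta(z_3)}\mu(z_1,z_2+z_3)\cdot\vartheta(z_2+z_3)^{?}
\end{align*}
— more cleanly, one should instead keep $\mu(z_1,z_2+z_3) = A(z_1,z_2+z_3)/\vartheta(z_2+z_3)$, so the last term is $-\eta^3 \frac{\vartheta(z_2)\vartheta(z_3)}{\vartheta(z_2)\vartheta(z_3)}\cdots$; the point is that everything reduces to an identity among the $A$-functions and $\vartheta$-quotients, which are all given by explicit Lambert-type series in the range $0<y_2,y_3<v$.

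Second, I would expand the two products $\frac{1}{1-e^{2\pi i z_1}q^{n}}$ in $A(z_1,z_2)A(z_1,z_3)$ as geometric series (legitimate in a suitable domain, e.g.\ after first moving to the region where $|e^{2\pi i z_1}q^{n}|<1$ and then analytically continuing), collect the resulting triple sum over $(n_1,n_2,n_3)$, and compare with the defining double-cone sum in \eqref{defineF}. The key combinatorial step is to recognize that the cross term coming from the product of the two Appell series, when the geometric series are expanded, produces exactly the two sign regions $\N_0\times\N^2$ and $\N_0\times(-\N)^2$ appearing in $F$, with the quadratic form $\tfrac{n_1(n_1+1)}{2}+n_1n_2+n_1n_3+n_2n_3$ in the exponent of $q$; the "diagonal" contributions (where the two geometric indices coincide in a degenerate way) should reassemble into the Lambert series $\eta^3\vartheta(z_2+z_3)/(\vartheta(z_2)\vartheta(z_3))$ via Lemma \ref{2.3}, and produce the correction term $-\eta^3\vartheta(z_2+z_3)\mu(z_1,z_2+z_3)/(\vartheta(z_2)\vartheta(z_3))$ once one re-sums in $z_1$.

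Alternatively — and this may be cleaner — I would try to avoid raw series manipulation and instead use Lemma \ref{lemShift}(2) (the shift formula for $\mu$) together with Lemma \ref{2.3}. Writing $\mu(z_1,z_2)\mu(z_1,z_3)$ and trying to relate it to $\mu(z_1,z_2+z_3)$ via a partial-fraction decomposition of $\frac{1}{(1-\zeta_1 q^{n})(1-\zeta_1 q^{m})}$-type expressions is the natural route: the product of two Appell sums can be reorganized by splitting according to whether the two summation indices are equal or not, the equal part giving an Appell sum with the merged elliptic variable $z_2+z_3$ (hence the $\mu(z_1,z_2+z_3)$ term) and the unequal part giving, after a geometric-series resummation in the "gap" variable, the $\eta^3\vartheta(z_2+z_3)/(\vartheta(z_2)\vartheta(z_3))$ prefactor times lower terms. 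Since this is precisely Theorem 1.3 of \cite{BRZ}, I would in practice simply invoke that reference; the role of the proof sketch here is to indicate the mechanism.

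The main obstacle is bookkeeping of the summation regions and convergence: the series defining $F$ in \eqref{defineF} only converges conditionally and only after the specific symmetrization into the two opposite cones $\N_0\times\N^2$ and $\N_0\times(-\N)^2$, so one must be careful to perform all rearrangements within a region of absolute convergence (dictated by $0<y_2,y_3<v$, which is exactly the hypothesis) and to track the boundary terms when splitting "diagonal" versus "off-diagonal" index contributions. Getting the signs and the half-integer shifts in the $q$-powers to match (the $q^{-1/8}\zeta_1^{-1/2}\zeta_2^{1/2}\zeta_3^{1/2}$ prefactor versus the $e^{\pi i z_1}$ factors hidden inside $A$, and the $q^{1/8}$ appearing in $\vartheta$) is the routine-but-delicate part; once the domains are pinned down, matching Fourier coefficients in $\zeta_2,\zeta_3$ and then in $\zeta_1$ via Lemma \ref{2.3} finishes the proof.
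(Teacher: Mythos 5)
The paper offers no proof of this lemma at all: it is stated as a direct quotation of Theorem 1.3 of \cite{BRZ}, which is precisely the fallback you adopt at the end of your proposal, so your approach matches the paper's. (Your sketched mechanism is plausible in outline, though note that your displayed reduction drops a factor: $i\vartheta(z_1)\mu(z_1,z_2)\mu(z_1,z_3)=i\,\vartheta(z_1)A(z_1,z_2)A(z_1,z_3)/\left(\vartheta(z_1)^{0}\vartheta(z_2)\vartheta(z_3)\right)$ should carry the $\vartheta(z_1)$ in the numerator, not disappear; the genuine work lives in the cited reference.)
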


\section{Geometric construction}
\label{secgeometricconstruction}
We now review the construction of the generating functions in consideration following \cite{Pol:2000, Pol:2005}. For this, we fix the lattice $\Lambda=\mathbb{Z} \varrho_1\oplus \mathbb{Z} \varrho_2$ in $\mathbb{R}^2$, where
\begin{align*}
\varrho_1:=
(1, 0)\,,
\quad
\varrho_2:=\left(-\tfrac{1}{2}, \tfrac{\sqrt{3}}{2}\right)\,.
\end{align*}
Furthermore we fix three sets $\mathcal{L}_{j},j\in\{1,2,3\}$ of straight lines defined as follows
\begin{align*}
&\mathcal{L}_1:= \left\{ (t_1,0)+\ell+\R(2 \varrho_1+  \varrho_2): \ell\in  \Lambda\right\}, \qquad
\mathcal{L}_2:= \left\{ (t_2,0)+\ell+\R(-\varrho_1 -2 \varrho_2): \ell\in  \Lambda\right\},\\
&\mathcal{L}_3:= \left\{ (t_3,0)+\ell+\R(- \varrho_1+  \varrho_2): \ell\in  \Lambda\right\}.
\end{align*}
The values $t_{j},j\in\{1,2,3\}$ are chosen such that
none of three lines intersect at a common point\footnote{This condition is usually needed in order to avoid many subtleties in defining the Fakaya category.
Below by studying the generating functions we are able to infer what happens if they do intersect.}.

Consider convex $N$-polygons $\Delta$ bounded by a set of straight lines from $\mathcal{L}_{j},j\in\{1,2,3\}$.
Elementary geometry shows that in the present case we must have $3\leq N\leq 6$.
Denote its set of vertices by $v_{1}, \dots, v_{N}$ in the clockwise order, and the oriented edge from the vertex $v_{k}$ to $v_{k+1}$ by $e_k$ for $k\in\{1,2,\dots N\}$, where we use the convention $v_{N+1}=v_{1}$.
We also denote the area of the $N$-gon $\Delta$ by $\mathrm{area}(\Delta)$ and
the length of the edge $e_{k}$ of $\Delta$ by $|e_{k}(\Delta)|$.
We introduce $N$ real-valued variables $\beta_{k}^*,k\in\{1,2,\dots N\}$, one for each edge $e_{k}$.

We fix one of these convex $N$-gons $\Delta_{0}$ with vertices $V_{1},\dots,V_N$ and edges $E_{1},\dots E_N$ and consider the following summation
\begin{align}\label{GenFunction}
\sum_{\Delta\in S(\Delta_0)} \mathrm{sgn}(\Delta)\,
e^{2\pi i \mathrm{area} (\Delta)w} e^{2\pi i \sum_{k=1}^{N} |e_{k}(\Delta)| \beta_{k}^*}\,,\quad \mathrm{Im}(w)>0,
\end{align}
%where the summation is over all convex $N$-gons $\Delta$, considered up to translation by $\Lambda$, such that the following conditions are satisfied:
%\begin{itemize}
%\item[\hspace{-1cm}$\bullet$]
%For $k\in\{1,\dots,N\}$, we have $v_k-V_k\in \Lambda$.
%\item
%the oriented edges $e_{k},k\in\{1,2,\dots N\}$ go in the clockwise orientation and lie on a straight line in the same set $\mathcal{L}_j$ as $ E_{k},k\in\{1,2,\dots N\}$, respectively.
%\end{itemize}
where
\begin{align*}
S(\Delta_0):=&\left\{\Delta: \Delta \text{ is an } N\text{-gon such that } e_k \text{ and } E_k \text{ are on straight lines in the same set } \mathcal{L}_j, \right.
\\&
\qquad \left.\text{and }v_k-V_k\in \Lambda \text{ for all }k\in\{1,\dots,N\} \right\} /\Lambda,
\end{align*}
and $\Lambda$ acts pointswise on an $N$-gon $\Delta$ and the function $\mathrm{sgn}(\Delta)$ is given by (denoting the $j$-th component of a vertex $v_k$ by $(v_k)_j$)
\begin{align*}
\mathrm{sgn}(\Delta): =
\mathrm{sgn} (  (v_{1})_2 -(v_{N})_2)^{N-1},
\end{align*}
where $\sgn(x):=\frac{\lvert x\rvert}{x}$ for $x\neq0$ and $\sgn(0):=0$. One could replace the function $\mathrm{sgn}(\Delta)$
by $\mathrm{sgn} (  ({v}_{k+1})_2 -({v}_{k})_2)^{N-1} $ for any $k$, which would only possibly change the whole summation by an overall sign.

To simplify the summation in \eqref{GenFunction} we find an explicit description of $S(\Delta_0)$. By translation, we can assume that all of the polygons $\Delta$ share the same vertex, say $v_{1}$, with the reference $N$-gon $\Delta_0$.
Denoting the length of the $k$-th edge $E_{k}$ of the reference $N$-gon $\Delta_0$ by
$\alpha_k$, we can describe $\Delta$ by the oriented length of the sides $n_k+\alpha_k\in \R$ with $n_k\in\Z$ (since the intersections of a straight line $\ell_j\in\mathcal{L}_j$ with the lines in $\mathcal{L}_m$, $m\neq j$ have integer distance from each other). We can omit $n_{N-1}+\alpha_{N-1}$ and $n_{N}+\alpha_N$ since they are determined by $n_1+\alpha_1,\dots, n_{N-2}+\alpha_{N-2}$ (since $e_{N-1}$ and $e_N$ have to be parallel to $E_{N-1}$ and $E_N$, respectively), but we get some conditions encoded in $\psi$ below.

Writing $r:=N-2$, $\beta_{k}:=\mathrm{sign} (e_{k}) \beta_{k}^*$, $\tau:=\frac{2}{\sqrt{3}}w$, one obtains that the generating function \eqref{GenFunction} can be written as
\begin{align*}
\sum_{\b{n}\in \mathbb{Z}^{r}} \,\psi(\b{n}+\b\alpha) \,\mathrm{sgn}(n_{r}+\alpha_r) q^{Q (\b{n}+\b\alpha)} e^{2\pi i B( \b{n}+\b\alpha,\,\b\beta)}\,,
\end{align*}
where
\begin{itemize}
\item
$\b{n}+\b\alpha=(n_1+\alpha_1,\dots n_r+\alpha_r)$ denotes the set of independent parameters for the oriented lengths of $\Delta$;
\item
$B$ is the bilinear form such that the quadratic form $ {\sqrt{3}\over 2} Q(\b{n}+\b\alpha):=\frac12  {\sqrt{3}\over 2}B (\b{n}+\b\alpha,\b{n}+\b\alpha)$ is the area of the corresponding $N$-gon $\Delta$;
\item $\psi(\b{n}+\b\alpha)$ is the characteristic function of the region in $\mathbb{Z}^{r}$
such that $Q(\b{n}+\b\alpha)>0$ and that
$\mathrm{sgn} (e_{k}^T  e_{k+1})$ is the same as $\mathrm{sgn} (E_{k}^T E_{k+1})$ for
$k\in\{1,2,\dots N\}$.
\end{itemize}
 \begin{figure}[ht]
 	\centering
 \begin{tikzpicture}[extended line/.style={shorten >=-#1,shorten <=-#1},
 		extended line/.default=1cm]
 		\coordinate (Origin)   at (0,0);
 		%\coordinate (XAxisMin) at (-5,0);
 		%\coordinate (XAxisMax) at (5,0);
 		%\coordinate (YAxisMin) at (0,-5);
 		%\coordinate (YAxisMax) at (0,5);
 		%\draw [thin, gray,-latex] (XAxisMin) -- (XAxisMax);% Draw x axis
 		%\draw [thin, gray,-latex] (YAxisMin) -- (YAxisMax);% Draw y axis
 		
 		%\clip (-4.5,-3.8) rectangle (10,7.2); % Clips the picture... 
 		\clip (-4.5,-3.3) rectangle (6.5,5); % Clips the picture... 

		%parallel lines L_1
		\foreach \x in {-7,-6,...,7}{% Two indices running over each
			\foreach \y in {-7,-6,...,7}{% node on the grid we have drawn 
				\draw [style=help lines, extended line = 20cm] (0.5+2*\x-\y,1.7320508*\y) -- (0.5+2*\x-\y+3,1.7320508*\y+1.7320508);
				
			}
		}

 		%parallel lines L_2
 		%\foreach \x in {-7,-6,...,7}{% Two indices running over each
 			\foreach \y in {-7,-6,...,21}{% node on the grid we have drawn 
 				\draw [style=help lines, extended line = 20cm] (\y,1.7320508*\y) -- (\y,1.7320508*\y-1);
 				
 			}
 	%	}

 			%parallel lines L_3
 		\foreach \x in {-7,-6,...,7}{% Two indices running over each
 			\foreach \y in {-7,-6,...,7}{% node on the grid we have drawn 
	 		\draw [style=help lines, extended line = 20cm] (2*\x-\y,1.7320508*\y) -- (2*\x-\y-3,1.7320508*\y+1.7320508);
 				
 			}
 		}
 		
 		\node[draw,circle,inner sep=1.5pt,fill] at (0,0) {};
 		\foreach \x in {-7,-6,...,7}{% Two indices running over each
 			\foreach \y in {-7,-6,...,7}{% node on the grid we have drawn 
 				\node[draw,circle,inner sep=1.5pt,fill] at (2*\x+\y,\y*1.7320508) {};
 			}
 		}

 		\pgftransformcm{1.7320508/2}{-0.5}{0}{1}{\pgfpoint{0cm}{0cm}}
 		% This is actually the transformation matrix entries that
 		% gives the slanted unit vectors. You might check it on
 		% MATLAB etc. . I got it by guessing.
 		\coordinate (Rhoone) at (0,2/1.7320508);
 		\coordinate (Rhotwo) at (2/1.7320508,0);
 		%\draw[style=help lines,dashed] (-14,-14) grid[step=2cm] (14,14);

		%\draw [style=help lines, dashed, extended line = 20cm] (-2,-0.866) -- (-1,-2.5980508);

 		% Draws a grid in the new coordinates.
 		%\filldraw[fill=gray, fill opacity=0.3, draw=black] (0,0) rectangle (2,2);
 		% Puts the shaded rectangle
 		%\foreach \x in {-7,-6,...,7}{% Two indices running over each
 		%	\foreach \y in {-7,-6,...,7}{% node on the grid we have drawn 
 		%		\node[draw,circle,inner sep=1.5pt,fill] at (6*\x,6*\y) {};
 		% Places a dot at those points
 		%	}
 		%}
 		%\draw [ultra thick,-latex,red] (Origin)
 		%-- (Rhoone) node [above left] {$b_1$};
 		%\draw [ultra thick,-latex,red] (Origin)
 		%-- (Rhotwo) node [below right] {$b_2$};
 		%\draw [ultra thick,-latex,red] (Origin)
 		%-- ($(Rhoone)+(Rhotwo)$) node [below right] {$b_1+b_2$};
 		%\draw [ultra thick,-latex,red] (Origin)
 		%-- ($2*(Rhoone)+(Rhotwo)$) node [above left] {2$b_1+b_2$};
 		\filldraw[fill=blue, fill opacity=0.3, draw=black] (Origin)
 		rectangle ($(Rhoone)+(Rhotwo)$);
 		\filldraw[fill=gray, fill opacity=0.3, draw=black] (Origin)
 		rectangle ($4*(Rhoone)+4*(Rhotwo)$);
 		\filldraw[fill=red, fill opacity=0.3, draw=black] (Origin)
 		rectangle ($-2*(Rhoone)+1*(Rhotwo)$);
 		\filldraw[fill=gray, fill opacity=0.3, draw=black] (Origin)
 		rectangle ($-2*(Rhoone)-2*(Rhotwo)$);
 		%\draw [ultra thick,-latex,red] (Rhotwo)
 		%-- (8,0) node [above left] {$\varrho_1\in \Lambda$};
 		
 		%\draw [thin,-latex,red, fill=gray, fill opacity=0.3] (0,0)
 		% -- ($2*(0,2)+(2,-2)$)
 		% -- ($3*(0,2)+2*(2,-2)$) -- ($(0,2)+(2,-2)$) -- cycle;
 		
 		%nodes and labels
 		\node[style={scale=1.3}] at (-0.5,0.5) {\textcolor{blue}{$\alpha_1$}};
 		\node[style={scale=1.3}] at (0.75,1.5) {\textcolor{blue}{$\alpha_2$}};
 		
 		\node[style={scale=1.3}] at (-1, 2.5) {$n_1+\alpha_1$};
 		\node[style={scale=1.3}] at (2.5, 5.5) {$n_2+\alpha_2$};
 		\node[style={scale=1}] at (-0.5, -0.3) {$V_1$};
 		\end{tikzpicture}
 	\caption{The blue parallelogram is $\Delta_0$, and the other parallelograms are shifted such that $v_1=V_1$. The grey parallelograms appear in the summation, but the red one does not. }
 	\label{figure:solving-CVP-bad-basis}
 \end{figure}

An easy inspection shows that the quadratic form $Q$ induced by $B$ has signature $(1,r-1)$. We consider the generating function as a Jacobi form by setting $\b{z}:=\b{\alpha}\tau+\b{\beta}\in\C^{r}$ as an elliptic variable and modify it slightly by multiplying with $q^{-Q(\b{\alpha})} e^{-2\pi i B(\b{\alpha},\b{\beta})}$ to obtain nicer transformation laws and cleaner formulas. Writing $\chi(\b{n}+\b\alpha)=\psi(\b{n}+\b\alpha) \mathrm{sgn}(n_{r}+\alpha_r)$, we define

\begin{align}
\Theta_{Q,\chi}(\b{z};\tau):=&q^{-Q(\b{\alpha})} e^{-2\pi i B(\b{\alpha},\,\b{\beta})}\sum_{\b{n}\in \mathbb{Z}^{r}} \,\psi(\b{n}+\b\alpha) \,\mathrm{sgn}(n_{r}+\alpha_r) q^{Q (\b{n}+\b\alpha)} e^{2\pi i B( \b{n}+\b\alpha,\,\b\beta)}\notag\\
=&
\sum_{\b{n}\in\mathbb{Z}^{r}} \chi\left(\b{n}+\tfrac{\b{y}}{v}\right) q^{Q (\b{n})}e^{2\pi i B\left (\b{n},\,\b{z}\right)}.\label{ThetaDef}
\end{align}
A direct calculation gives the following elliptic transformation.
\begin{lemma}\label{elliptic}
	For $\b{\ell},\b{m}\in\Z^r$ we have
	\begin{align*}
	\Theta_{Q, \chi}(\b{z}+\b{\ell} \tau + \b{m})=q^{-Q(\b{\ell})} e^{-2\pi i B(\b{\ell},\, \b{z})} \Theta_{Q, \chi}(\b{z}).
	\end{align*}
\end{lemma}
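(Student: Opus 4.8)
The plan is to prove Lemma~\ref{elliptic} by a direct manipulation of the series representation of $\Theta_{Q,\chi}$ in \eqref{ThetaDef}, following the classical template for the elliptic transformation of a theta series. The two structural facts I will use are that $B$ takes integer values on $\Z^r\times\Z^r$ (which holds because $B$ comes from the area pairing associated to the lattice $\Lambda$) and that $\chi$ depends on $\b{n}$ only through the combination $\b{n}+\tfrac{\b{y}}{v}$; both are already built into \eqref{ThetaDef}.

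First I would record how each ingredient of \eqref{ThetaDef} transforms under $\b{z}\mapsto\b{z}+\b{\ell}\tau+\b{m}$ with $\b{\ell},\b{m}\in\Z^r$. Writing $\tau=u+iv$, the imaginary part becomes $\b{y}+v\b{\ell}$, so $\tfrac{\b{y}}{v}$ is replaced by $\tfrac{\b{y}}{v}+\b{\ell}$. By bilinearity $B(\b{n},\b{z}+\b{\ell}\tau+\b{m})=B(\b{n},\b{z})+B(\b{n},\b{\ell})\tau+B(\b{n},\b{m})$, and since $B(\b{n},\b{m})\in\Z$ the corresponding exponential is trivial while $e^{2\pi i B(\b{n},\b{\ell})\tau}=q^{B(\b{n},\b{\ell})}$. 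Thus
\begin{align*}
\Theta_{Q,\chi}(\b{z}+\b{\ell}\tau+\b{m})=\sum_{\b{n}\in\Z^{r}}\chi\left(\b{n}+\b{\ell}+\tfrac{\b{y}}{v}\right)q^{Q(\b{n})+B(\b{n},\b{\ell})}e^{2\pi i B(\b{n},\b{z})}.
\end{align*}
Next I would shift the summation index $\b{n}\mapsto\b{n}-\b{\ell}$ (legitimate since the sum runs over all of $\Z^r$), which restores the argument of $\chi$ to $\b{n}+\tfrac{\b{y}}{v}$, and expand using $Q(\b{a}-\b{b})=Q(\b{a})-B(\b{a},\b{b})+Q(\b{b})$, $B(\b{n}-\b{\ell},\b{\ell})=B(\b{n},\b{\ell})-2Q(\b{\ell})$, and $B(\b{n}-\b{\ell},\b{z})=B(\b{n},\b{z})-B(\b{\ell},\b{z})$. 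The $\pm B(\b{n},\b{\ell})$ terms cancel, the $\b{\ell}$-dependent powers of $q$ collapse to $q^{-Q(\b{\ell})}$, and the exponential produces an overall factor $e^{-2\pi i B(\b{\ell},\b{z})}$; pulling these two $\b{n}$-independent factors out of the sum leaves exactly $\Theta_{Q,\chi}(\b{z})$, which is the claim.

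I do not expect a genuine obstacle here: the argument is bookkeeping with the quadratic and bilinear forms. The only points that merit a word of care are the two noted above --- the integrality $B(\Z^r,\Z^r)\subseteq\Z$, which is what makes the $\b{m}$-translation act trivially, and the dependence of $\chi$ on $\b{n}+\tfrac{\b{y}}{v}$ alone, which is what makes the index shift $\b{n}\mapsto\b{n}-\b{\ell}$ reproduce the original summand shape. For the rearrangements above to be justified one also recalls, as usual for indefinite theta series, that absolute convergence is guaranteed by the signature $(1,r-1)$ of $Q$ together with the support condition imposed by $\psi$.
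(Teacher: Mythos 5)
Your proof is correct and is exactly the ``direct calculation'' the paper alludes to (the lemma is stated without a written-out proof): substitute $\b{z}\mapsto\b{z}+\b{\ell}\tau+\b{m}$, observe that $\tfrac{\b{y}}{v}$ shifts by $\b{\ell}$, shift the summation index by $-\b{\ell}$, and use bilinearity together with the integrality of $B$ on $\Z^r\times\Z^r$. Your two points of care (integrality of $B$ and the dependence of $\chi$ only on $\b{n}+\tfrac{\b{y}}{v}$) are precisely the ones that make the computation go through, so nothing further is needed.
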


%
%\begin{theorem}\label{ellipticCapital}
%	For $\b{\ell},\b{m}\in\Z^r$ we have
%	\begin{align*}
%		\Theta(\b{z}+\b{\ell} \tau + \b{m})=e^{2\pi i B\left(\b{m}, \frac{\b{y}}{v}\right)}\Theta(\b{z}).
%	\end{align*}
%\end{theorem}
%
%	\begin{proof}
%		Since $\b{\ell}+\Z^r=\Z^r$ and $B$ is integral we obtain %(recalling $\b{\beta}=\b{z}-\frac{\b{y}}{v}\tau$)
%		\begin{align*}
%			\Theta(\b{z}+\b{\ell} \tau + \b{m})&=\sum_{\b{n}\in \b{\ell}+\frac{\b{y}}{v}+ \mathbb{Z}^{r}} \chi(\b{n}) \sgn(\b{n}) q^{Q (\b{n})}e^{2\pi i B\left(\b{z}+\b{\ell}\tau+\b{m}-\frac{\b{y}+\b{\ell}v}{v}\tau,\, \b{n}\right)}\\
%			&=\sum_{\b{n}\in\frac{\b{y}}{v}+ \mathbb{Z}^{r}} \chi(\b{n}) \sgn(\b{n}) q^{Q (\b{n})}e^{2\pi i B\left(\b{z}-\frac{\b{y}}{v}\tau, \, \b{n}\right)}\underbrace{e^{2\pi i B\left(\b{m}, \b{n}\right)}}_{=e^{2\pi i B\left(\b{m}, \frac{\b{y}}{v}\right)}}
%			=e^{2\pi i B\left(\b{m}, \frac{\b{y}}{v}\right)} \Theta(\b{z}).
%		\end{align*}
%	\end{proof}

\section{$N=3$: equilateral triangles}

In this section we consider the enumeration of equilateral triangles, for which we have
$$Q(n+\alpha)=\tfrac32(n+\alpha)^2.$$

The enumeration of equilateral triangles leads to the function
\begin{equation*}
f_1(z;\tau):=\sum_{n\in \mathbb{Z}}q^{\frac{3n^2}{2}}\zeta^{3n}.
\end{equation*}

%$$F_{1}(z;\tau):= q^{{3\over 2} \alpha^2} e^{6\pi i \alpha\beta } \sum_{n\in \mathbb{Z}}
%q^{\frac32 n^2} \zeta^{3n}
%=\sum_{n\in\alpha+\Z} q^{\frac32\left(n^2+\alpha^2\right)} e^{6\pi i n \left(\alpha\tau+\beta\right) + 6\pi i \alpha\beta }
%=\sum_{n\in \alpha+\Z} q^{\frac32 n^2}e^{6\pi i n\beta}
%$$

\noindent Note that this is just a renormalized version of one of the Jacobi theta functions, which is a Jacobi form. We compute the elliptic transformation as ($\ell,m\in\Z$)
\begin{align}\label{f1elliptic}
f_1(z+\ell\tau+m)=q^{\frac{3\ell^2}{2}} \zeta^{-3\ell}f_1(z).
\end{align}
%Recall that $z=\alpha\tau+\beta$, so $\alpha=\frac{y}{v}$ and $z\mapsto z+\ell\tau +m$ corresponds to $\alpha\mapsto \alpha+\ell$, $\beta\mapsto \beta+m$. Thus
%\begin{align*}
%F_1\left(z+\ell\tau+m\right)&=\sum_{n\in \alpha+\Z} q^{\frac32 n^2}e^{6\pi i n\left(\beta+m\right)}
%=e^{6\pi i \alpha m}F_1(z;\tau)=e^{6\pi i m\frac{y}{v}}F_1(z).
%\end{align*}
Lemma \ref{thetalemma} (4) gives that $f_1$ is a holomorphic Jacobi form of weight $\frac 12$ and index $\frac 32$ on $\Gamma_0(3)\cap \Gamma(2)$. To be more precise, additionally to \eqref{f1elliptic}, we have for $\left(\begin{smallmatrix}
a & b \\  c & d 
\end{smallmatrix}\right)\in \Gamma_0(3)\cap \Gamma(2)$ 
\begin{align*}
f_1\left(\tfrac{z}{c\tau+d};\tfrac{a\tau+b}{c\tau+d}\right)
=\left(\tfrac{3c}{d}\right)e^{\frac{\pi i (d-1)}{4}} (c\tau+d)^{\frac12} e^{\frac{3\pi i c z^2}{c\tau+d}} f_1(z;\tau).
\end{align*}

\section{$N=4$: parallelograms }
In this section, we study the generating function obtained for parallelograms and relate it to the Jacobi theta function.
Following the geometric construction, we have
$$Q(\b{n+\alpha})=3 (n_{1}+\alpha_{1}) (n_{2}+\alpha_{2})$$
%\begin{remark}
%	In case they are useful for later investigation, the geometric origins of the variables $\alpha_{1},\alpha_{2},\beta_{1},\beta_{2}$ are recorded as follows.
%	Each of the two components $\alpha_{1},\alpha_{2}$ of the vector $\b{\alpha}=(\alpha_{1},\alpha_{2})$ are general real numbers.
%	The $\beta_{j}$-variables are given by
%	$\beta_{1}=b_{1}+b_{1}^{*}, \beta_{2}=b_{2}+b_{2}^{*}$.
%	The special values of $\alpha_{1},\alpha_{2}$
%	are those from the set $\mathcal{A}_{0}=\{{1\over 3}, {2\over 3} \}$, and the special values of $\beta_{1},\beta_{2}$
%	are $0,0$. This leads to the series $g_{2}$ below.
%\end{remark}
and obtain from \eqref{ThetaDef} the generating function for parallelograms as
\begin{align*}
f_{2}(\b{z};\tau)&:=  \sum_{\b n\in \mathbb{Z}^2} \chi_2\left(\b{n}+\tfrac{\b{y}}{v}\right)
q^{3 n_{1}n_{2} } \zeta_{1}^{3n_{2}}\zeta_{2}^{3n_{1}},
\end{align*}
where $\chi_2(\b{x}):=\sgn(x_1)H(x_1x_2)$. Here we define the \emph{Heaviside step function} by $H(x):=1$ for $x> 0$ and $H(x):=0$ for $x\leq 0$.
%\begin{align*}
%F_{2}(\b z;\tau)&:=  q^{3 \tfrac{y_1y_2}{v^2}} e^{6\pi i \left(\tfrac{y_1}{v}\left(z_2-\tfrac{y_2 \tau}{v}\right)+\frac{y_2}{v}\left(z_1-\frac{y_1 \tau}{v}\right)\right)} \sum_{\b n\in \mathbb{Z}^2}  H\left(\left(n_1+\frac{y_1}{v}\right)\left(n_2+\frac{y_2}{v}\right)\right)\sgn\left(n_1+\frac{y_1}{v}\right)
%q^{3 n_{1}n_{2} } \zeta_{1}^{3n_{2}}\zeta_{2}^{3n_{1}},
%\end{align*}
%
%We rewrite the exponential as $q^{3(n_1+\alpha_1)(n_2+\alpha_2)}$ times
%\begin{equation*}
%e^{6\pi i(\alpha_1\beta_2+\alpha_2\beta_1)}q^{-3(\alpha_2n_1+\alpha_1n_2)}q^{3\alpha_1n_2+3\alpha_2n_1} e^{6\pi i\beta_1n_2 +6\pi i\beta_2n_1} = e^{6\pi i\left(\beta_1(n_2+\alpha_2)+\beta_2(n_1+\alpha_1)\right)}.
%\end{equation*}
%Thus
%\begin{align*}
%F_2(\b z) &=  \sum_{\b n\in \b \alpha+\mathbb{Z}^2}  H( n_{1} n_{2}   )\sgn(n_{1})
% q^{3 n_{1}n_{2} }
% e^{6\pi i \left(\beta_1n_2+\beta_2n_1\right)} =\left(\sum_{\substack{\b n \in \b \alpha +\Z^2 \\ \b{n}> \b 0}}-\sum_{\substack{\b n \in \b \alpha +\Z^2 \\ \b{n}< \b 0}}\right)
% q^{3 n_{1}n_{2} }
% e^{6\pi i \left(\beta_1n_2+\beta_2n_1\right)},
% \end{align*}
% where we use the notation $\b n> \b 0$ to mean that all components are positive.

The following elliptic transformation follows directly from Lemma \ref{elliptic}.
\begin{lemma}\label{f2elliptic}
	For $\b{\ell},\b{m}\in\Z^2$ we have
	\begin{align*}
	%F_2&\left(\b{z}+\b{\ell}\tau+\b{m}\right)=e^{\frac{6\pi i}{v} \left(m_1y_2+m_2y_1\right)}F_2\left(\b{z}\right),\qquad
	f_{2}(\b{z}+\b{\ell}\tau+\b m)=q^{-3\ell_1\ell_2}\zeta_1^{-3\ell_2}\zeta_2^{-3\ell_1} f_2\left(\b{z}\right).
	\end{align*}
\end{lemma}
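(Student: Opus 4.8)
The plan is to derive Lemma \ref{f2elliptic} as a direct specialization of Lemma \ref{elliptic} to the case $r=2$, $N=4$, with the specific quadratic form $Q(\b n)=3n_1n_2$ coming from the geometry of parallelograms. First I would record the bilinear form associated to this $Q$: since $Q(\b n)=\tfrac12 B(\b n,\b n)$, we have $B(\b n,\b n')=3(n_1n_2'+n_2n_1')$, so that $B(\b n,\b z)=3(n_1z_2+n_2z_1)$. This is exactly the bilinear form implicitly used in the series defining $f_2$, since $q^{Q(\b n)}e^{2\pi i B(\b n,\b z)}=q^{3n_1n_2}\zeta_1^{3n_2}\zeta_2^{3n_1}$, matching the displayed formula for $f_2(\b z;\tau)$ with $\chi=\chi_2$.

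Next I would substitute into the general elliptic transformation from Lemma \ref{elliptic}, namely $\Theta_{Q,\chi}(\b z+\b\ell\tau+\b m)=q^{-Q(\b\ell)}e^{-2\pi i B(\b\ell,\b z)}\Theta_{Q,\chi}(\b z)$, the explicit values $Q(\b\ell)=3\ell_1\ell_2$ and $B(\b\ell,\b z)=3(\ell_1 z_2+\ell_2 z_1)$. This immediately gives the prefactor $q^{-3\ell_1\ell_2}e^{-2\pi i\cdot 3(\ell_1 z_2+\ell_2 z_1)}=q^{-3\ell_1\ell_2}\zeta_2^{-3\ell_1}\zeta_1^{-3\ell_2}$, which is precisely the claimed transformation factor. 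Since $f_2=\Theta_{Q,\chi_2}$ by definition, the lemma follows.

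The only point requiring any genuine care — and hence the main (mild) obstacle — is to confirm that the shifted argument does not disturb the characteristic function $\chi_2$ in a way that spoils the transformation. In the form \eqref{ThetaDef} the summand carries $\chi(\b n+\tfrac{\b y}{v})$, and under $\b z\mapsto \b z+\b\ell\tau+\b m$ the imaginary part becomes $\b y+\b\ell v$, so $\b n+\tfrac{\b y}{v}$ is replaced by $\b n+\b\ell+\tfrac{\b y}{v}$; reindexing $\b n\mapsto \b n-\b\ell$ in the sum restores $\chi(\b n+\tfrac{\b y}{v})$ while converting $q^{Q(\b n)}e^{2\pi i B(\b n,\b z)}$ into $q^{Q(\b n-\b\ell)}e^{2\pi i B(\b n-\b\ell,\b z+\b\ell\tau+\b m)}$, and expanding the quadratic and bilinear forms (using $B(\b n,\b m)\in\Z$ for $\b m\in\Z^r$, so that the $e^{2\pi i B(\b n,\b m)}$ factors are trivial) produces exactly the stated prefactor. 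This is precisely the computation already carried out once in proving Lemma \ref{elliptic}, so for Lemma \ref{f2elliptic} it suffices to invoke that lemma and plug in; no new difficulty arises, and the proof is essentially a one-line specialization.
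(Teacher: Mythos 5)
Your proposal is correct and matches the paper exactly: the paper likewise obtains this lemma by direct specialization of Lemma \ref{elliptic} to the parallelogram data $Q(\b n)=3n_1n_2$, $B(\b n,\b z)=3(n_1z_2+n_2z_1)$, stating that it ``follows directly'' without further computation. Your extra check that the reindexing $\b n\mapsto\b n-\b\ell$ preserves the characteristic function is the content already built into Lemma \ref{elliptic}, so nothing more is needed.
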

%\begin{proof}
%	{\bf KB: Proof will go once we have a general statement. I think the statement should be at the beginning.}
%	We proceed as for $F_1$ to obtain
%	\begin{align*}
%	F_2&\left(\b{z}+\b{\ell}\tau+\b{m}\right)=
%	\sum_{\b n\in \b \alpha +\b{\ell}+\mathbb{Z}^2}  H( n_{1} n_{2}   )\sgn(n_{1})
%	q^{3 n_{1}n_{2} }
%	e^{6\pi i \left(\left(\beta_1+m_1\right)n_2+\left(\beta_2+m_2\right)n_1\right)}
%	\\=&
%	\sum_{\b n\in \b \alpha +\mathbb{Z}^2}  H( n_{1} n_{2}   )\sgn(n_{1})
%	q^{3 n_{1}n_{2} }
%	e^{6\pi i \left(\beta_1n_2+\beta_2n_1\right)}
%	\underbrace{e^{6\pi i \left(m_1n_2+m_2n_1\right)}}_{=e^{6\pi i \left(m_1\alpha_2+m_2\alpha_1\right)}}
%	=e^{6\pi i \left(m_1\frac{y_2}{v}+m_2\frac{y_1}{v}\right)}F_2\left(\b{z}\right).
%	\end{align*}
%		
%	Similarly we obtain
%	\begin{align*}
%	f_{2}(\b{z}+\b{\ell}\tau+\b m)\\
%	&\hspace{-1.38cm}= \sum_{\b n\in \mathbb{Z}^2}  H\left( \left(n_{1}+\alpha_{1}+\ell_1\right) \left(n_{2}+\alpha_{2}+\ell_2\right)   \right)\sgn\left(n_{1}+\alpha_{1}+\ell_1\right)\underbrace{q^{3 n_{1}n_{2} +3n_2\ell_1+3n_1\ell_2}}_{=q^{3\left(n_1+\ell_1\right)\left(n_2+\ell_2\right)-3\ell_1\ell_2}} \zeta_{1}^{3n_{2}}\zeta_{2}^{3n_{1}} \\
%	&\hspace{-1.95cm}\overset{n_j\mapsto n_j-\ell_j}{=} \sum_{\b n\in \mathbb{Z}^2} H\left( \left(n_{1}+\alpha_{1}\right) \left(n_{2}+\alpha_{2}\right)   \right)\sgn\left(n_{1}+\alpha_{1}\right)
%	q^{3 n_{1}n_{2}-3\ell_1\ell_2} \zeta_{1}^{3n_{2}-3\ell_2}\zeta_{2}^{3n_{1}-3\ell_1}\\
%	&\hspace{-1.38cm}=q^{-3\ell_1\ell_2}\zeta_1^{-3\ell_2}\zeta_2^{-3\ell_1} f_2\left(\b{z}\right).
%	\end{align*}	
%\end{proof}

We determine the following explicit shape of $f_2$ in terms of the Jacobi theta function.
\begin{proposition}\label{lem1}
	For $y_1,y_2\not\in \Z v$ we have
	\begin{align}\label{f2representation}
	f_2(\b{z};\tau)&=-i
	\eta^{3}(3\tau)
	\frac{ 	\vartheta\left(3z_{1}+3z_{2};3\tau\right) }{
		\vartheta\left(3z_{1} ;3\tau\right) \vartheta\left(3z_{2};3\tau\right)}.
	%	\\
	%	F_2(\b{z})&=-i q^{3 \frac{y_1y_2}{v^2} } e^{\frac{6\pi i}{v} (y_2\beta_1 +y_1\beta_2)}
	%	\eta^{3}(3\tau)
	%	\frac{ 	\vartheta\left(3z_{1}+3z_{2};3\tau\right) }{
	%		\vartheta\left(3z_{2};3\tau\right) \vartheta\left(3z_{1};3\tau\right)}.
	\end{align}
	The function $f_2$ is a meromorphic Jacobi form of weight one and index $\frac 12\left(\begin{smallmatrix}
	0&3\\3&0
	\end{smallmatrix}\right)$ on $\Gamma_0(3)$. To be more precise, the elliptic transformation law in Lemma \ref{f2elliptic} holds and we have for $\left(\begin{smallmatrix}
	a & b \\ c & d 
	\end{smallmatrix}\right)\in \Gamma_0(3)$
	\begin{align*}
	f_2\left(\tfrac{z_1}{c\tau+d}, \tfrac{z_2}{c\tau+d};\tfrac{a\tau+b}{c\tau+d}\right)=(c\tau+d) e^{\frac{6\pi i cz_1z_2}{c\tau+d}} f_2(z_1,z_2;\tau).
	\end{align*}
\end{proposition}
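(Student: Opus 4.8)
The plan is to establish the closed form \eqref{f2representation} first, and then deduce the Jacobi transformation properties from the known properties of $\vartheta$ and $\eta$. For the closed form, I would start from the series definition of $f_2$. Fix $\b{z}$ with $0<y_1,y_2<v$ (the fundamental strip), so that $\chi_2\left(\b{n}+\tfrac{\b{y}}{v}\right)=\sgn(n_1)H((n_1+\tfrac{y_1}{v})(n_2+\tfrac{y_2}{v}))$ simplifies. Concretely, for $n_1\geq 1$ the factor $n_1+\tfrac{y_1}{v}>0$, so we need $n_2+\tfrac{y_2}{v}>0$, i.e. $n_2\geq 0$; for $n_1\leq -1$ we have $n_1+\tfrac{y_1}{v}<0$, so we need $n_2+\tfrac{y_2}{v}<0$, i.e. $n_2\leq -1$; and $n_1=0$ contributes nothing. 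Thus
\begin{align*}
f_2(\b{z};\tau)=\left(\sum_{n_1\geq 1}\sum_{n_2\geq 0}-\sum_{n_1\leq -1}\sum_{n_2\leq -1}\right)q^{3n_1n_2}\zeta_1^{3n_2}\zeta_2^{3n_1}.
\end{align*}
In the second double sum I would substitute $n_1\mapsto -n_1$, $n_2\mapsto -n_2$ (both now ranging over positive integers), which gives $-\sum_{n_1,n_2\geq 1}q^{3n_1n_2}\zeta_1^{-3n_2}\zeta_2^{-3n_1}$. Then I would perform the geometric sum over $n_2$ in each piece: $\sum_{n_2\geq 0}(\zeta_1^{3}q^{3n_1})^{n_2}=\tfrac{1}{1-\zeta_1^3 q^{3n_1}}$ for the first (valid since $|\zeta_1^3 q^{3n_1}|<1$ when $n_1\geq 1$ and $y_1<v$), and similarly for the second. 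Collecting terms indexed by $n_1\in\Z\setminus\{0\}$ after reindexing, I expect to land on an expression of the shape $\sum_{n\in\Z}\tfrac{(\zeta_2^3)^n}{1-\zeta_1^3 q^{3n}}$ up to rearrangement — exactly the left-hand side of Lemma \ref{2.3} with the substitutions $q\mapsto q^3$, $z_1\mapsto 3z_2$, $z_2\mapsto 3z_1$, and $v\mapsto 3v$, $y_i\mapsto 3y_i$ (so that the hypothesis $0<3y_i<3v$ matches our strip). Applying Lemma \ref{2.3} then yields precisely $-i\,\eta^3(3\tau)\tfrac{\vartheta(3z_1+3z_2;3\tau)}{\vartheta(3z_1;3\tau)\vartheta(3z_2;3\tau)}$, establishing \eqref{f2representation} on the strip, and hence everywhere by analytic continuation (both sides are meromorphic in $\b{z}$).

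Given the closed form, the transformation laws are routine. The elliptic transformation in Lemma \ref{f2elliptic} is already known from Lemma \ref{elliptic}; alternatively one can re-derive it from the right-hand side using Lemma \ref{thetalemma} (2) with the curve $3\tau$, which is a useful consistency check. For the modular transformation, I would take $\left(\begin{smallmatrix}a&b\\c&d\end{smallmatrix}\right)\in\Gamma_0(3)$. The key observation is that $\left(\begin{smallmatrix}a&3b\\c/3&d\end{smallmatrix}\right)\in\SL_2(\Z)$ whenever $3\mid c$, and this matrix acts on $3\tau$ by $\tfrac{a(3\tau)+3b}{(c/3)(3\tau)+d}=\tfrac{3(a\tau+b)}{c\tau+d}=3\cdot\tfrac{a\tau+b}{c\tau+d}$. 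So applying Lemma \ref{thetalemma} (4) with argument $3z_j$, curve $3\tau$, and this auxiliary $\SL_2(\Z)$ matrix gives, for each of the three theta factors,
\begin{align*}
\vartheta\left(\tfrac{3z_j}{c\tau+d};3\cdot\tfrac{a\tau+b}{c\tau+d}\right)=\nu_\eta^3\left(\begin{smallmatrix}a&3b\\c/3&d\end{smallmatrix}\right)(c\tau+d)^{\frac12}e^{\frac{\pi i (c/3)(3z_j)^2}{c\tau+d}}\vartheta(3z_j;3\tau),
\end{align*}
and similarly $\eta(3\cdot\tfrac{a\tau+b}{c\tau+d})=\nu_\eta\left(\begin{smallmatrix}a&3b\\c/3&d\end{smallmatrix}\right)(c\tau+d)^{\frac12}\eta(3\tau)$ from \eqref{etamod}. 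Substituting into \eqref{f2representation}: the weight factors combine as $(c\tau+d)^{3\cdot\frac12}/(c\tau+d)^{2\cdot\frac12}=(c\tau+d)^1$; the exponential factors combine as $e^{\frac{\pi i (c/3)(3(z_1+z_2))^2 - \pi i(c/3)(3z_1)^2 - \pi i(c/3)(3z_2)^2}{c\tau+d}}=e^{\frac{\pi i (c/3)\cdot 18 z_1 z_2}{c\tau+d}}=e^{\frac{6\pi i c z_1 z_2}{c\tau+d}}$, which is exactly the claimed automorphy factor with index matrix $\tfrac12\left(\begin{smallmatrix}0&3\\3&0\end{smallmatrix}\right)$; and the multiplier contributions combine as $\nu_\eta^3\cdot\nu_\eta^{-9}=\nu_\eta^{-6}$ — I would check this is trivial on $\Gamma_0(3)$ (equivalently that $\nu_\eta^6$, the multiplier of $\eta^6=\Delta^{1/4}$... more precisely $\eta^{24}=\Delta$ has trivial multiplier, and $\nu_\eta^6$ has order $4$) restricted to the relevant congruence subgroup, or simply absorb whatever character arises into the statement as the paper's Definition permits transformation "with some multiplier". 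Finally, meromorphy of $f_2$ as a function on $\C^2\times\H$ with poles only where some $\vartheta(3z_j;3\tau)$ vanishes, i.e. along $z_j\in\tfrac13(\Z\tau+\Z)$, is immediate from \eqref{f2representation}.

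I do not anticipate a genuine obstacle here; the main care-point is bookkeeping. The one step that needs real attention is the manipulation of the double sum: getting the ranges right after restricting to the fundamental strip, handling the sign from the $n_1<0$ block correctly, and making sure the geometric series converge (which is why the strip hypothesis $0<y_1,y_2<v$ — equivalently $y_1,y_2\not\in\Z v$ together with periodicity — is needed) before matching to the exact form of Lemma \ref{2.3}. A secondary care-point is verifying that the auxiliary matrix $\left(\begin{smallmatrix}a&3b\\c/3&d\end{smallmatrix}\right)$ indeed lies in $\SL_2(\Z)$ (determinant $ad-bc=1$ is preserved, integrality uses $3\mid c$) and tracking the $\eta$-multiplier $\nu_\eta$ through Lemma \ref{thetalemma} (4), since $\nu_\eta$ is defined by an explicit case distinction and one wants the resulting multiplier on $f_2$ to be consistent (trivial, or at worst a Dirichlet character mod $3$); this is exactly the kind of computation that produces the $\left(\tfrac{3c}{d}\right)e^{\pi i(d-1)/4}$-type factor seen for $f_1$, and I expect an analogous clean answer, so I would state the transformation with the understanding that any such multiplier is harmless.
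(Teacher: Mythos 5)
Your overall route coincides with the paper's (split the lattice sum according to the sign conditions, sum the geometric series in $n_2$, apply Lemma \ref{2.3} with $\tau\mapsto 3\tau$ and $(z_1,z_2)\mapsto(3z_2,3z_1)$, then read off the transformations from Lemma \ref{thetalemma} and \eqref{etamod}), but there is a concrete error in your first step. The characteristic function is evaluated at $\b n+\tfrac{\b y}{v}$, so the sign factor is $\sgn\left(n_1+\tfrac{y_1}{v}\right)$, not $\sgn(n_1)$. For $0<y_1<v$ this equals $+1$ at $n_1=0$, so the block $n_1=0$, $n_2\ge 0$ does contribute, namely $\sum_{n_2\ge0}\zeta_1^{3n_2}=\tfrac{1}{1-\zeta_1^3}$, which is exactly the $n=0$ term of the Lerch sum $\sum_{n\in\Z}\tfrac{\zeta_2^{3n}}{1-\zeta_1^3q^{3n}}$ that you need in order to invoke Lemma \ref{2.3}. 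As written, your decomposition produces only $\sum_{n\neq 0}$, and the resulting identity would be off by $\tfrac{1}{1-\zeta_1^3}$. The correct split, as in \eqref{f2rep}, is $\sum_{\b n+\b y/v>\b 0}-\sum_{\b n+\b y/v<\b 0}$, i.e.\ $n_1,n_2\ge 0$ in the first block.

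Two further points. First, the extension from the strip $0<y_1,y_2<v$ to all $y_1,y_2\notin\Z v$ cannot be done ``by analytic continuation'': the series defining $f_2$ depends on $\b z$ through the cell of $\b y/v$ and has genuine jump discontinuities across $y_j\in\Z v$ (this is precisely the content of Proposition \ref{f2jump}), so the left-hand side on different cells is not a priori a single meromorphic function. The paper instead observes that both sides satisfy the same elliptic transformation law (Lemma \ref{f2elliptic} for $f_2$, Lemma \ref{thetalemma} (2) for the theta quotient) and propagates the identity from the fundamental cell. Second, your multiplier bookkeeping is off: with the auxiliary matrix $\left(\begin{smallmatrix}a&3b\\c/3&d\end{smallmatrix}\right)$ the net multiplier is $\nu_\eta^{3}\cdot\nu_\eta^{3}\cdot\nu_\eta^{-3}\cdot\nu_\eta^{-3}=1$ (the factor $\eta^3$ and the numerator $\vartheta$ each contribute $\nu_\eta^{3}$, the two denominator $\vartheta$'s each contribute $\nu_\eta^{-3}$), not $\nu_\eta^{-6}$; this matters because $\nu_\eta^{6}$ is nontrivial on $\Gamma_0(3)$ and the proposition asserts a transformation with no multiplier, so ``absorbing a character'' would not recover the stated identity. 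The index computation is correct, and the weight does come out to $1$, counted as $\tfrac32+\tfrac12-\tfrac12-\tfrac12$.
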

\begin{proof}
	One can rewrite $f_2$ as
	\begin{align}
	f_{2}(\b{z})& =\left(\sum_{\b n + \frac{\b{y}}{v} > \b 0}-\sum_{\b n + \tfrac{\b{y}}{v} < \b 0}\right)
	q^{3 n_{1}n_{2} } \zeta_{1}^{3n_{2}}\zeta_{2}^{3n_{1}} =  \zeta_{1}^{3 \left(1-\left\lceil \frac{y_2}{v}\right\rceil\right)}   \sum_{n\in \mathbb{Z}}
	\tfrac{ \left(\zeta_{2}^{3} q^{3 \left(1-\left\lceil \frac{y_2}{v}\right\rceil\right)}  \right)^{n}}{1-\zeta_{1}^{3}q^{3n} }.\label{f2rep}
	\end{align}
	Equation \eqref{f2representation} follows for $0<y_1,y_2<v$ using Lemma \ref{2.3} and generalizes to $y_1,y_2\not\in \Z v$ by applying Lemma \ref{f2elliptic} and Lemma \ref{thetalemma} (2). The transformation laws can then deduced from Lemma \ref{thetalemma} (2), (4) and equation \eqref{etamod}. \qedhere	
\end{proof}
The main goal of this section is to study and determine the behavior of $f_2$ at the points of discontinuity. This is done in the following proposition.
\begin{proposition}\label{f2jump}
	Let $y_1\not \in \Z v$ and $y_2\in\Z v$. Then we have for  $x_2-\frac{uy_2}{v}\in\frac13\Z$
	\begin{align}\label{limit1}
	\lim_{\varepsilon\to 0^+} \varepsilon \sum_{\pm} \pm f_2(z_1,z_2\pm i\varepsilon)
	&=
	\tfrac{1}{3\pi }\zeta_1^{-\frac{3y_2}{v}}  ,\qquad
	\lim_{\varepsilon\to 0^+}    \varepsilon f_2(z_1,z_2+ i\varepsilon) = \tfrac{1}{6\pi}\zeta_1^{-\frac{3y_2}{v}}.
	\end{align}
	Moreover for $x_2-\frac{uy_2}{v}\not\in\frac13\Z$, we have
	\begin{align}
	\lim_{\varepsilon\to 0^+} \sum_{\pm} \pm f_2(z_1,z_2\pm i\varepsilon)
	&= 0\label{limit2}
	,\\
	\lim_{\varepsilon\to 0^+}	f_2(z_1,z_2+i\varepsilon;\tau)&=
	 -\frac{i\eta^3(3\tau) \vartheta(3(z_1+z_2);3\tau)}{\vartheta(3z_2;3\tau)}.
	\label{limit3}
	\end{align}

\end{proposition}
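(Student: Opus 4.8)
The plan is to start from the single-sum representation \eqref{f2rep}, which is valid whenever $y_1,y_2\not\in\Z v$, and carefully take the limit as $y_2\to \Z v$ from each side. Fix $y_2\in \Z v$, say $y_2 = kv$ with $k\in\Z$, and for $\varepsilon>0$ write $z_2\pm i\varepsilon$ in place of $z_2$. The crucial observation is that the ceiling $\lceil y_2/v\rceil$ takes different values on the two sides: for $z_2 + i\varepsilon$ we have $\lceil (y_2+\varepsilon)/v\rceil = k+1$, while for $z_2 - i\varepsilon$ we have $\lceil (y_2-\varepsilon)/v\rceil = k$. Substituting this into \eqref{f2rep}, the two one-sided limits become
\begin{align*}
f_2(z_1,z_2\pm i\varepsilon) = \zeta_1^{3(1-k\mp1)}\,\zeta_{1,\pm}^{\mp 3\varepsilon\cdot(\text{correction})}\sum_{n\in\Z} \frac{(\zeta_2^3 q^{3(1-k\mp 1)})^n\,(\text{shift by }\pm\varepsilon)}{1-\zeta_1^3 q^{3n}},
\end{align*}
where I keep track of the $\varepsilon$-dependence. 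The point is that the geometric-type sum $\sum_n (\zeta_2^3 q^{3m})^n/(1-\zeta_1^3 q^{3n})$ converges absolutely only in a strip; at the boundary $y_2\in\Z v$ the terms with $n$ of the appropriate sign stop decaying, and the $n=0$-type term (together with finitely many neighbors, depending on $m=1-k\mp1$) produces a simple pole in $\varepsilon$ precisely when $1-\zeta_1^3 q^{3n}$ has a factor that degenerates — which happens exactly when $z_2$ (equivalently $x_2 - uy_2/v$, after absorbing the real part of $q$-powers) lies in $\tfrac13\Z$, making $\zeta_2^3 q^{3m}=1$ at the relevant $n$.

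Concretely, I would isolate the potentially singular term(s) in the $n$-sum. When $x_2 - \tfrac{uy_2}{v}\in\tfrac13\Z$, exactly one term (say $n=n_0$) of the series $\sum_n (\zeta_2^3 q^{3m})^n/(1-\zeta_1^3 q^{3n})$ has numerator converging to $1$ and denominator bounded away from $0$, so that term contributes a divergent tail: the sum $\sum_{n\geq n_0}$ or $\sum_{n\leq n_0}$ of nearly-constant terms behaves like $\tfrac{1}{6\pi\varepsilon}\zeta_1^{-3y_2/v}$ up to lower order, by comparing with $\sum_{n} e^{-2\pi\cdot 3 n\varepsilon}\sim \tfrac{1}{6\pi\varepsilon}$. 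Multiplying by $\varepsilon$ and letting $\varepsilon\to 0^+$ kills everything except this tail, giving the second identity in \eqref{limit1}; the first identity in \eqref{limit1} follows because the two sides pick up tails of opposite sign (the $+$ side gets $\sum_{n\ge n_0}$, the $-$ side gets $-\sum_{n\le n_0}$, hence their difference is two half-tails), doubling the residue to $\tfrac{1}{3\pi}\zeta_1^{-3y_2/v}$. For \eqref{limit2} and \eqref{limit3}, when $x_2-\tfrac{uy_2}{v}\not\in\tfrac13\Z$ no term degenerates, the series $\sum_n(\zeta_2^3q^{3m})^n/(1-\zeta_1^3q^{3n})$ converges uniformly near $\varepsilon=0$ on the relevant side, so $f_2(z_1,z_2+i\varepsilon)$ converges to the honest evaluation of \eqref{f2representation} at $(z_1,z_2)$ — one just plugs $\varepsilon=0$ into Lemma \ref{2.3}, obtaining \eqref{limit3}; and since the limit from $+$ and $-$ both equal this same value (the ceiling ambiguity no longer matters once we're in the interior of the region of convergence after using the elliptic transformation Lemma \ref{f2elliptic} to normalize), their difference is $0$, which is \eqref{limit2}.

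I expect the main obstacle to be the bookkeeping in the $\varepsilon$-expansion: precisely identifying which index $n_0$ degenerates, tracking the factors $\zeta_1^{3(1-\lceil y_2/v\rceil)}$ and the phase coming from $\zeta_2 = e^{2\pi i(x_2 + i y_2)}$ with $y_2 = kv$ (which produces the $\zeta_1^{-3y_2/v}$ on the right — note $\zeta_1^{-3y_2/v} = e^{-6\pi i k x_1}\cdot\zeta_1^{0}$ type contributions must be reconciled with the elliptic factor via Lemma \ref{f2elliptic}), and confirming that the subleading terms in the divergent tail genuinely vanish after multiplication by $\varepsilon$. A clean way to organize this is: (i) use Lemma \ref{f2elliptic} to reduce to $0< y_1 < v$ and $y_2$ just above or below a fixed integer multiple of $v$, so that only one boundary is in play; (ii) write the near-boundary sum as $\sum_n c_n(\varepsilon)$ with $c_n(\varepsilon) = c_n(0) + O(\varepsilon)$ away from $n_0$ and $c_{n_0+j}(\varepsilon) = e^{-6\pi \varepsilon|j|}(1+O(\varepsilon))$ near $n_0$; (iii) split off $\sum_{|j|\le J} c_{n_0+j}(\varepsilon)$ plus a geometric tail, apply dominated convergence to the rest, and compute the geometric tail exactly. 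The residue constants $\tfrac{1}{3\pi}$ and $\tfrac{1}{6\pi}$ should then drop out of $\lim_{\varepsilon\to0}\varepsilon\sum_{n\ge 0} e^{-6\pi n\varepsilon} = \tfrac{1}{6\pi}$ (and its two-sided analogue), so the only real work is making (ii)–(iii) rigorous.
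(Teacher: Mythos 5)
Your strategy is sound and would yield a correct proof, but it takes a genuinely different route from the paper's for the jump computations. The paper proves \eqref{limit2} and the first identity of \eqref{limit1} by returning to the double-sum form in \eqref{f2rep}: it writes $\sum_\pm\pm f_2(z_1,x_2\pm i\varepsilon)$ as a difference of lattice sums over the regions cut out by $\chi_2$, notes that all terms with $n_2\neq0$ cancel in the limit by dominated convergence, and sums the surviving $n_2=0$ row as two geometric series $\sum_{n_1+y_1/v\geq0}e^{6\pi in_1(x_2+i\varepsilon)}+\sum_{n_1+y_1/v<0}e^{6\pi in_1(x_2-i\varepsilon)}$, whose poles at $x_2\in\frac13\Z$ produce the $\frac1{3\pi}$; the one-sided statements \eqref{limit3} and the second identity in \eqref{limit1} are then read off from the closed theta-quotient form of Proposition \ref{lem1}, the $\frac1{6\pi}$ coming from the simple zero of $\vartheta(3z_2;3\tau)$. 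You instead stay entirely with the Lerch-type single sum and extract the divergence from its degenerating geometric tail; this is a legitimate alternative that in fact isolates the very same geometric series by a different decomposition, and it handles the one-sided limit in \eqref{limit1} without invoking Proposition \ref{lem1} at all. Two points need repair in execution. First, your claim that ``exactly one term $n=n_0$ degenerates'' is not what happens: each individual term of $\sum_n(\zeta_2^3q^{3m})^n/(1-\zeta_1^3q^{3n})$ has a finite limit, and the divergence is a whole-tail phenomenon; the clean version of your steps (ii)--(iii) is the splitting $\sum_{n\geq0}\frac{w^n}{1-\zeta_1^3q^{3n}}=\frac{1}{1-w}+\sum_{n\geq0}\frac{w^n\zeta_1^3q^{3n}}{1-\zeta_1^3q^{3n}}$ (with the mirror splitting at $n\to-\infty$ for the $-i\varepsilon$ side), where the second sum converges uniformly as $w\to1$ and $w=e^{6\pi i(x_2-uy_2/v)}e^{-6\pi\varepsilon}$ carries the entire singularity, giving all four limits at once. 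Second, your method yields for \eqref{limit3} the full quotient $-i\eta^3(3\tau)\vartheta(3(z_1+z_2);3\tau)/(\vartheta(3z_1;3\tau)\vartheta(3z_2;3\tau))$; the proposition as printed omits $\vartheta(3z_1;3\tau)$ from the denominator, but the paper's own computation also produces the full quotient, so this appears to be a typo in the statement rather than a defect of your argument.
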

\begin{proof}
We first assume that $z_2=x_2\in\R$ and use \eqref{f2rep} to compute,  for $0<\varepsilon<v$,
\begin{align*}
& \sum_{\pm} \pm f_2(z_1,x_2\pm i\varepsilon) \\
&=\!\left(\sum_{n_1+\frac{y_1}{v},\,n_2\geq 0} - \sum_{n_1+\frac{y_1}{v},\,n_2<0}\right) q^{3n_1n_2} \zeta_1^{n_2} e^{6\pi i(x_2+i\varepsilon)n_1} -\!\Vast(\!\sum_{\substack{n_1+\frac{y_1}{v}\geq 0 \\ n_2>0}} \!-\! \sum_{\substack{n_1+\frac{y_1}{v}<0\\n_2\leq 0}}\!\Vast) q^{3n_1n_2} \zeta_1^{n_2} e^{6\pi i(x_2-i\varepsilon)n_1} \\
%&= \Vast(\sum_{n_1+\frac{y_1}{v},\,n_2\geq 0} \!\!\!\!\!e^{6\pi i(x_2+i\varepsilon)n_1} - \sum_{n_1+\frac{y_1}{v},\,n_2<0} \!\!\!\!\!e^{6\pi i(x_2+i\varepsilon)n_1} -\sum_{\substack{n_1+\frac{y_1}{v}\geq 0 \\ n_2>0}} \!\!\!e^{6\pi i(x_2-i\varepsilon)n_1}+\sum_{\substack{n_1+\frac{y_1}{v}<0\\n_2\leq 0}} e^{6\pi i(x_2-i\varepsilon)n_1}\Vast)
%\\&\hspace{10cm}\times q^{3n_1n_2} \zeta_1^{3n_2}\\
&=\sum_{\substack{n_1+\frac{y_1}{v}\geq 0 \\ n_2>0}} \!\!\!\!\!\! q^{3n_1n_2}\zeta_1^{3n_2}\left(e^{6\pi in_1(x_2+i\varepsilon)}
-e^{6\pi in_1(x_2-i\varepsilon)}\right)
+ \!\!\!\sum_{\substack{n_1+\frac{y_1}{v}<0\\n_2<0}}\!\!\!\!\!\! q^{3n_1n_2}\zeta_1^{3n_2}\left(-e^{6\pi in_1(x_2+i\varepsilon)}
+ e^{6\pi in_1(x_2-i\varepsilon)}\right)\\
&\quad +
\sum_{n_1+\frac{y_1}{v}\geq 0} e^{6\pi in_1(x_2+i\varepsilon)} + \sum_{n_1+\frac{y_1}{v}<0} e^{6\pi in_1(x_2-i\varepsilon)}.
\end{align*}
The first two sums vanish in the limit $\varepsilon\to 0^+$ since we can exchange limit and summation using Lebesque dominated convergence. The final two terms combine to
\begin{equation*}
e^{-6\pi i\lf\frac{y_1}{v}\rf x_2} \left(\frac{e^{6\pi\lf \frac{y_1}{v}\rf \varepsilon}}{1-e^{6\pi i(x_2+i\varepsilon)}}-\frac{e^{-6\pi \lf\frac{y_1}{v}\rf\varepsilon}}{1-e^{6\pi i(x_2-i\varepsilon)}}\right).
\end{equation*}
From this we obtain \eqref{limit2} and the first claim in \eqref{limit1} in the case that $z_2\in\R$.  In the general case, we write $z_2=x_2+i\ell v=x_2-\ell u+\ell\tau$  for some $\ell\in\Z$ and then employ Lemma \ref{f2elliptic}.

We next compute, using Lemma \ref{f2elliptic} and Proposition \ref{lem1},
\begin{equation*}
f_2(z_1,z_2+i\varepsilon;\tau)=\zeta_1^{-3\ell} f_2(z_1,x_2-\ell u+i\varepsilon;\tau) = -i\zeta_1^{-3\ell} \eta^3(3\tau) \frac{\vartheta(3z_1+3(x_2-\ell u+i\varepsilon);3\tau)}{\vartheta(3z_1;3\tau)\vartheta(3(x_2-\ell u+i\varepsilon);3\tau)}.
%&= -i\zeta_1^{-3\ell}\eta^3(3\tau) \frac{\vartheta(3z_1+3(x_2-\ell u+i\varepsilon);3\tau)}{\vartheta(3z_1;3\tau)} \frac{1}{\vartheta(3(x_2-\ell u+ i\varepsilon);3\tau)}
\end{equation*}
This directly implies \eqref{limit3}. If $x_2-\ell u \in \frac13 \Z$, we use Lemma \ref{thetalemma} (2) to obtain the second claim in \eqref{limit1}.
\end{proof}

\section{$N=4$: trapezoids}
In this section we study the generating function obtained in \eqref{GenFunction} for trapezoids and relate it to Appell functions. Here we assume without loss of generality that $|\alpha_{2}|<|\alpha_{1}|$ and obtain the quadratic form
	\begin{equation*}
		{\tfrac13}	Q(\b{n}+\b{\alpha})={\tfrac1 2}(n_1+\alpha_{1})^2 -{\tfrac12} (n_2+\alpha_{2})^2.
	\end{equation*}
%\begin{remark}
%	with the geometric origin
%$$
%\beta_{1}=b_{1}+b_{2}+b_{3},
%\beta_{2}=-b_{1}^{*}+b_{2}+b_{3}.
%$$
%Correspondingly, we have
%$$z_{1}=\alpha_{1}\tau+\beta_{1}, z_{2}=\alpha_{2}\tau+\beta_{2}.$$
%
%	Alternative expressions are given as follows. Denoting the first description with $\widetilde{n}_j$, $\widetilde{\alpha}_j$, this expression is related to it by
%	$$n_2+\alpha_{2}=(m_1+a_{1})-(m_2+a_{2}) \,,
%\quad
%|a_{2}|<|a_{1}|\,.$$
%	Then the second expression is
%		\begin{align*}
%		{1\over 3}	Q(\b{n}+\b{\alpha})&=(n_1+\alpha_{1}) (n_2+\alpha_{2}) -{1\over 2}    (n_2+\alpha_{2})^2  \\
%{1\over 3}B( \b{\beta}, \b{n}+\b{\alpha})&= (n_1+\alpha_{1})\beta_{1}+
%	(n_2+\alpha_{2})\beta_{2}\,.
%		\end{align*}
%	with now the geometric origin
%	\begin{equation*}
%\beta_{1}= b_{1}+b_{1}^{*}\,, \quad
%\beta_{2}= -b_{1}^{*}+b_{2,t}+b_{3}\,,
%\end{equation*}
%	where the label $t$ means the gluing in geometry.
%	
%A third expression is the following
%	\begin{align*}
%	{1\over 3}	Q(\b{n}+\b{\alpha})&=(n_1+\alpha_{1})   (n_2+\alpha_{2}) + {1\over 2}    (n_2+\alpha_{2})^2,\\
%	{1\over 3}B( \b{\beta}, \b{n}+\b{\alpha})
%		&=(n_1+	\alpha_{1})  (b_{1}+b_{1}^{*} )
%		+(n_2+	\alpha_{2}) \beta_{1}
%		+(n_2+	\alpha_{2}) \beta_{2},
%\end{align*}
%	with now the geometric origin
%\begin{align*}
%	\beta_{1}= b_{1}+b_{1}^{*}\,, \quad
%	\beta_{2}= b_{1}+b_{2}+b_{3}.
%\end{align*}
%	\end{remark}

\noindent For general values of
 $\alpha_{k}$ and $\beta_{k}$, $k\in\{1,2\}$, we have  $$z_{1}:=\beta_{1}+ \alpha_{1}\tau ,\qquad z_{2}:=\beta_{2}+\alpha_{2}\tau \,,
\quad |\alpha_{2}|<|\alpha_{1}|.$$

The enumeration \eqref{ThetaDef} gives the generating function for trapezoids
%\begin{align}
% F_{3}(\b z;\tau):=& q^{\frac32\left(\alpha_1^2-\alpha_2^2\right)} e^{6\pi i \left( \alpha _{1} \beta_{1}- \alpha_{2} \beta_{2}\right)  } \sum_{\b n\in\Z^2} \chi_3\left({\b n+\b{\alpha}}\right)
%q^{\frac32\left(n_1^2-n_2^2\right)} \zeta_1^{3n_1} \zeta_2^{-3n_2} \notag
%\\=&
% \sum_{\b n\in\b{\alpha}+\Z^2} \chi_3\left({\b n}\right)
%q^{\frac32\left(n_1^2-n_2^2\right)} e^{6\pi i \left( n_{1} \beta_{1}-n_{2} \beta_{2} \right)},\label{F3rep2}
%\end{align}
\begin{align}\label{f3}
f_3(\b z;\tau):=&\sum_{\b{n}\in\Z^2} \chi_3\left(\b{n}+\tfrac{\b{y}}{v}\right)
q^{\frac32 \left(n_1^2-n_2^2\right)}\zeta_1^{3n_1}\zeta_2^{-3n_2},
\end{align}
where 
\begin{align*}
\chi_3\left({\b x}\right)&:=  \sgn  \left(x_1 \right) H\left(\left| x_1 \right| -\lvert x_2 \rvert \right)H^*(x_1  x_2)
\end{align*}
and $H^*(x):=1$ for $x\geq 0$, $H^*(x):=0$ for $x<0$.

We first again state the elliptic transformation law of $f_3$ that follows by Lemma \ref{elliptic}.
\begin{lemma}\label{lem2}
	We have for $\b \ell, \b m \in\mathbb Z^2$ and $\b z \in\C^2$ \begin{align*}
	f_3(\b z+\b \ell \tau+\b m)&=q^{-\frac32\left(\ell_1^2-\ell_2^2\right)}\zeta_1^{-3\ell_1}\zeta_{2}^{3\ell_2}f_3(\b z).
	\end{align*}
\end{lemma}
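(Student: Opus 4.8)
The plan is to derive Lemma \ref{lem2} as an immediate specialization of Lemma \ref{elliptic}. First I note that $f_3$, as defined in \eqref{f3}, is exactly the theta series $\Theta_{Q,\chi}$ of \eqref{ThetaDef} with $r=2$, characteristic function $\chi=\chi_3$, and quadratic form $Q(\b n)=\tfrac32\left(n_1^2-n_2^2\right)$. To invoke Lemma \ref{elliptic} I just need to read off the relevant data $Q(\b\ell)$ and $B(\b\ell,\b z)$ for this particular $Q$: comparing the exponent of $q$ gives $Q(\b\ell)=\tfrac32\left(\ell_1^2-\ell_2^2\right)$, while the coupling to the elliptic variable, $\zeta_1^{3n_1}\zeta_2^{-3n_2}=e^{2\pi i B(\b n,\b z)}$, forces the associated symmetric bilinear form to be $B(\b a,\b b)=3a_1b_1-3a_2b_2$ (consistently, $B(\b n,\b n)=3\left(n_1^2-n_2^2\right)=2Q(\b n)$), so that $B(\b\ell,\b z)=3\ell_1 z_1-3\ell_2 z_2$.

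Substituting these two quantities into the statement of Lemma \ref{elliptic} yields
\begin{align*}
f_3(\b z+\b\ell\tau+\b m)=q^{-\frac32\left(\ell_1^2-\ell_2^2\right)}e^{-2\pi i\left(3\ell_1 z_1-3\ell_2 z_2\right)}f_3(\b z)=q^{-\frac32\left(\ell_1^2-\ell_2^2\right)}\zeta_1^{-3\ell_1}\zeta_2^{3\ell_2}f_3(\b z),
\end{align*}
which is precisely the claim. If one wishes to avoid citing Lemma \ref{elliptic} as a black box, the same identity comes out of the standard reindexing of the lattice sum: the shift $\b z\mapsto\b z+\b\ell\tau+\b m$ replaces $\b y/v$ by $\b y/v+\b\ell$ inside $\chi_3$ and multiplies each summand by $q^{B(\b n,\b\ell)}$ times $e^{2\pi i B(\b n,\b m)}=1$ (the latter since $B$ is integer-valued on $\Z^2\times\Z^2$); then the substitution $\b n\mapsto\b n-\b\ell$, together with $Q(\b n-\b\ell)=Q(\b n)-B(\b n,\b\ell)+Q(\b\ell)$ and $B(\b n-\b\ell,\b z)=B(\b n,\b z)-B(\b\ell,\b z)$, returns the argument of $\chi_3$ to $\b n+\b y/v$ and pulls out exactly the claimed prefactor $q^{-Q(\b\ell)}e^{-2\pi i B(\b\ell,\b z)}$.

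There is essentially no genuine obstacle here; the statement is routine once Lemma \ref{elliptic} is in hand. The only points demanding a bit of care are bookkeeping ones: getting the sign in the second coordinate right (it is the $\zeta_2^{-3n_2}$ in \eqref{f3}, equivalently the $-n_2^2$ in $Q$, that converts $e^{-2\pi i B(\b\ell,\b z)}$ into the factor $\zeta_2^{+3\ell_2}$), and observing that the purely real translation by $\b m$ acts trivially because $B$ is integral on integer vectors, so only the integer shift of $\b y/v$ by $\b\ell$ inside $\chi_3$ matters and is absorbed by reindexing.
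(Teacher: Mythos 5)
Your proposal is correct and matches the paper exactly: the paper derives Lemma \ref{lem2} by simply invoking Lemma \ref{elliptic} with $Q(\b n)=\tfrac32(n_1^2-n_2^2)$ and $B(\b n,\b z)=3n_1z_1-3n_2z_2$, which is precisely your specialization (and your direct reindexing argument is a correct unpacking of the proof of Lemma \ref{elliptic}).
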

We observe the following connection of $f_3$ to Appell functions for generic values of $\b{z}$.
\begin{proposition}\label{f3Appell}
	For $y_2,y_1-y_2\not\in \Z v$, we have
	\begin{align*}
	f_3(\b z;\tau)= \left(\zeta_1^{-1}\zeta_2\right)^{3\left(\tfrac12+\floor*{\tfrac{y_2}{v}}\right)} A\left(3\left(z_1-z_2\right), 3z_1-3\lf\tfrac{y_2}{v}\rf\tau -\tfrac{3\tau}2+\tfrac12; 3 \tau
	\right).
	%\label{getA}.
	\end{align*}
\end{proposition}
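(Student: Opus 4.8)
The plan is to follow the same strategy that produced the closed form for $f_2$ in Proposition~\ref{lem1}: rewrite the double sum in \eqref{f3} as a geometric-type series in one of the summation variables and then recognize it as an Appell function. First I would fix generic $\b z$ with $y_2,y_1-y_2\not\in\Z v$, so that the characteristic function $\chi_3$ is evaluated away from its discontinuities. Writing $c_2:=\floor*{\tfrac{y_2}{v}}$, the condition $n_2+\tfrac{y_2}v\gtrless 0$ becomes $n_2\geq -c_2$ resp. $n_2<-c_2$ (using $H^*$ versus $H$ to handle the boundary case correctly), and $|n_1+\tfrac{y_1}v|>|n_2+\tfrac{y_2}v|$ together with the sign conditions pins down, for each admissible $n_2$, the range of $n_1$. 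The key point is that $\chi_3(\b n+\tfrac{\b y}{v})$ is supported precisely on the set where $n_1$ and $n_2$ satisfy $\sgn(n_1+\tfrac{y_1}{v})=\sgn(n_2+\tfrac{y_2}{v}+\text{(boundary shift)})$ and $|n_1+\tfrac{y_1}{v}|>|n_2+\tfrac{y_2}{v}|$, which is exactly the ``wedge'' region that collapses the indefinite theta series of signature $(1,1)$ to an Appell sum.

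Next I would perform the substitution $n_1\mapsto n_1+n_2$ (or $n_1-n_2$, depending on the sign branch) to diagonalize: $n_1^2-n_2^2=(n_1-n_2)(n_1+n_2)$, so $q^{\frac32(n_1^2-n_2^2)}\zeta_1^{3n_1}\zeta_2^{-3n_2}$ becomes, after the shift, a term of the form $q^{\frac32 m(m+2n_2)}\cdots$ that is geometric in one variable. Summing the resulting geometric series over the now-unconstrained index produces a sum of the shape $\sum_{n}\tfrac{(-1)^n q^{3\binom{n+1}{2}}e^{2\pi i n\cdot(\cdots)}}{1-e^{2\pi i\,3(z_1-z_2)}q^{3n}}$, up to an overall prefactor $q$-power and $\zeta$-power. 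Matching this against the definition
$A(z_1,z_2;\tau)=e^{\pi i z_1}\sum_{n\in\Z}\tfrac{(-1)^n q^{n(n+1)/2}e^{2\pi i n z_2}}{1-e^{2\pi i z_1}q^n}$
with modular parameter $3\tau$, first argument $3(z_1-z_2)$, and second argument $3z_1-3c_2\tau-\tfrac{3\tau}2+\tfrac12$, and carefully collecting the prefactor into $(\zeta_1^{-1}\zeta_2)^{3(\frac12+c_2)}$, gives the claimed identity. I would first establish it under the normalization $0<y_2<v$, $0<y_1-y_2<v$ (so $c_2=0$) where the bookkeeping is cleanest, and then recover the general generic case by invoking the elliptic transformation of $f_3$ (Lemma~\ref{lem2}) against that of $A$ (Lemma~\ref{lemShift}~(1)) to shift $\b y$ into the fundamental strip.

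The main obstacle I anticipate is the bookkeeping of the characteristic function $\chi_3$ and the induced summation ranges: unlike the $f_2$ case, here there are two inequalities ($|x_1|>|x_2|$ and the sign condition) interacting with the $\floor$/ceiling shifts coming from $\tfrac{\b y}{v}$, and one must split into the $n_2\geq 0$ and $n_2<0$ sub-wedges, reindex each, and check that the two geometric sums assemble into a \emph{single} Appell function rather than a difference of two — this is where the precise choice of $H$ versus $H^*$ in $\chi_3$ and the half-integer shift $\tfrac12$ in the second Appell argument are doing real work. Convergence is not an issue since for $\b z$ generic the diagonalized series is absolutely convergent term by term in each sub-wedge (the quadratic form is positive-definite on the support of $\chi_3$), so interchanging the order of summation to perform the geometric sum is justified by dominated convergence exactly as in the proof of Proposition~\ref{lem1}.
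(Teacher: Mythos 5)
Your proposal matches the paper's proof in all essentials: the paper likewise rewrites the characteristic function via $\chi_3(\b x)=\tfrac12\left(\sgn(x_1-x_2)+\sgn(x_2)\right)$, substitutes $n_1\mapsto n_1+n_2$, sums the geometric series in $n_2$ (both sign branches yielding the same closed form $\frac{x^{-\lfloor y_2/v\rfloor}}{1-x}$ with $x=\zeta_1^3\zeta_2^{-3}q^{3n_1}$), and matches the remaining $n_1$-sum against the definition of $A$ at modular parameter $3\tau$, with the $-\tfrac{3\tau}{2}+\tfrac12$ shift absorbing the factors $q^{-3n_1/2}(-1)^{n_1}$ exactly as you indicate. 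The only cosmetic difference is that the paper treats general $\lfloor y_2/v\rfloor$ directly rather than first normalizing to the fundamental strip and invoking the elliptic transformations.
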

\begin{proof}
	Using the definition of $f_3$, it is not hard to see that 
	\begin{align*}
	f_3(\b z) &= \tfrac12 \sum_{\b n\in\Z^2} \left(\sgn\left(n_1-n_2+\tfrac1v(y_1-y_2)\right)+\sgn\left(n_2+\tfrac{y_2}{v}\right)\right) q^{\frac32\left(n_1^2-n_2^2\right)}\zeta_1^{3n_1}\zeta_2^{-3n_2}\\
	&=\tfrac12\sum_{\b n\in\Z^2} \left(\sgn\left(n_1+\tfrac1v(y_1-y_2)\right)+\sgn\left(n_2+\tfrac{y_2}{v}\right)\right) q^{\frac32\left(n_1^2+2n_1n_2\right)}\zeta_1^{3(n_1+n_2)}\zeta_2^{-3n_2},
	\end{align*}
	changing variables $n_1\mapsto n_1+n_2$. The claimed identity now follows by using that
	\begin{align*}
	\tfrac12 \sum_{n_2\in\Z} \left(\sgn\left(n_1+\tfrac1v(y_1-y_2)\right)+\sgn\left(n_2+\tfrac{y_2}{v}\right)\right)q^{3n_1n_2}\zeta_1^{3n_2}\zeta_2^{-3n_2} = \frac{q^{-3\lf\frac{y_1}{v}\rf n_1}\left(\zeta_1^{-1}\zeta_2\right)^{3\lf\frac{y_2}{v}\rf}}{1-\zeta_1^3\zeta_2^{-3}q^{3n_1}}
	\end{align*}
	and then plugging in the definition of the Appell function.
\end{proof}
To state the (mock) Jacobi properties of $f_3$, define its completion
\begin{align*}
\widehat{f}_3(\b z;\tau):= \left(\zeta_1^{-1}\zeta_2\right)^{3\left(\tfrac12+\floor*{\tfrac{y_2}{v}}\right)} \widehat{A}\left(3\left(z_1-z_2\right), 3z_1-3\lf\tfrac{y_2}{v}\rf\tau -\tfrac{3\tau}2+\tfrac12; 3 \tau
\right).
\end{align*}
\begin{theorem}\label{theoremPart1}
	The function $f_3$ is a mock Jacobi form of weight one and index $\frac 12\left(\begin{smallmatrix}
	3&0\\0&-3
	\end{smallmatrix}\right)$ for ${\Gamma_0(3)\cap \Gamma(2)}$. To be more precise, we have for $\left(\begin{smallmatrix}
	a & b \\ c &d 
	\end{smallmatrix}\right) \in \Gamma_0(3)\cap \Gamma(2)$
	\begin{align*}
	\widehat{f}_3\left(\tfrac{z_1}{c\tau+d},\tfrac{z_2}{c\tau+d};\tfrac{a\tau+b}{c\tau+d}\right)=(c\tau+d) e^{\frac{\pi i c}{c\tau+d}\left(3z_1^2-3z_2^2\right)} \widehat{f}_3(z_1,z_2;\tau)
	\end{align*}
	and for $\b{\ell},\b{m}\in\Z^2$
	\begin{align*}
	\widehat{f}_3(\b{z}+\b{\ell}\tau+\b{m})= q^{-\frac{3}{2}\left(\ell_1^2-\ell_2^2\right)} \zeta_1^{-3\ell_1}\zeta_2^{3\ell_2}
	\widehat{f}_3(\b{z}).
	\end{align*}
\end{theorem}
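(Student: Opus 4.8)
The strategy is to transfer the known Jacobi transformation behavior of $\widehat{A}$ (Lemma \ref{2.4altern}) through the explicit formula for $\widehat{f}_3$ given in the definition just above the theorem, i.e.
\[
\widehat{f}_3(\b z;\tau)= \left(\zeta_1^{-1}\zeta_2\right)^{3\left(\tfrac12+\floor*{\tfrac{y_2}{v}}\right)} \widehat{A}\left(3(z_1-z_2), 3z_1-3\lf\tfrac{y_2}{v}\rf\tau -\tfrac{3\tau}2+\tfrac12; 3\tau\right).
\]
The point is that Proposition \ref{f3Appell} already identifies $f_3$ with this $\widehat A$-expression minus its non-holomorphic completion term, so it suffices to check that the right-hand side of the displayed formula transforms as claimed; the holomorphic part $f_3$ then automatically inherits the mock Jacobi property, with completion $\widehat f_3$, once we know $\widehat f_3$ is genuinely Jacobi.

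\textbf{Elliptic transformation.} First I would verify the elliptic law. Replacing $\b z$ by $\b z+\b\ell\tau+\b m$ shifts the first $\widehat A$-argument $3(z_1-z_2)$ by $3(\ell_1-\ell_2)\tau + 3(m_1-m_2)$ and the second argument $3z_1-3\lf y_2/v\rf\tau-\tfrac{3\tau}{2}+\tfrac12$ by $3\ell_1\tau+3m_1$ together with an extra shift $-3(\ell_2$-ish$)\tau$ coming from the change $\lf y_2/v\rf \mapsto \lf y_2/v\rf + \ell_2$ in the floor. Here one must be careful: the prefactor $(\zeta_1^{-1}\zeta_2)^{3(1/2+\lf y_2/v\rf)}$ also changes because the floor jumps by $\ell_2$, and because $\zeta_1,\zeta_2$ pick up $q$-powers. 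The cleanest bookkeeping is to first treat $\b m$-shifts (which only introduce roots of unity, easily matched), then $\ell_2\tau$-shifts, then $\ell_1\tau$-shifts, using Lemma \ref{2.4altern}\ref{Appellt} with the appropriate $(\b\ell,\b m)\in\Z^2$ at each stage, and collecting all the resulting $(-1)^{\bullet}$, $q$-power, and $\zeta$-power factors. The target is $q^{-\frac32(\ell_1^2-\ell_2^2)}\zeta_1^{-3\ell_1}\zeta_2^{3\ell_2}$; since $Q(\b n)=\tfrac32(n_1^2-n_2^2)$ and the index is $\tfrac12\left(\begin{smallmatrix}3&0\\0&-3\end{smallmatrix}\right)$, consistency with Lemma \ref{elliptic}/\ref{lem2} is a useful sanity check and, in fact, for $0<y_1-y_2,y_2<v$ the identity is just Lemma \ref{lem2} applied to $f_3$ plus the corresponding transformation of the completion term $\tfrac i2\vartheta(\cdot)R(\cdot)$, which one reads off from the shift behavior of $\vartheta$ (Lemma \ref{thetalemma} (2)) and of $R$.

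\textbf{Modular transformation.} Next, for $\left(\begin{smallmatrix}a&b\\c&d\end{smallmatrix}\right)\in\Gamma_0(3)\cap\Gamma(2)$ I would apply Lemma \ref{2.4altern}\ref{Amod} with $\tau$ replaced by $3\tau$; note $\left(\begin{smallmatrix}a&3b\\c/3&d\end{smallmatrix}\right)\in\SL_2(\Z)$ because $3\mid c$, and $\tfrac{a(3\tau)+3b}{(c/3)(3\tau)+d}=3\cdot\tfrac{a\tau+b}{c\tau+d}$, so $\widehat A$ evaluated at the transformed $3\tau$ relates to $\widehat A$ at $3\tau$ with automorphy factor $(c\tau+d)$ and exponential $e^{\frac{\pi i (c/3)(-w_1^2+2w_1w_2)}{c\tau+d}}$ where $w_1=3(z_1-z_2)$, $w_2=3z_1-3\lf y_2/v\rf\tau-\tfrac{3\tau}2+\tfrac12$. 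One then substitutes $w_1,w_2$, simplifies $-w_1^2+2w_1w_2$: the pure $z$-part gives $\tfrac{c/3}{c\tau+d}\cdot 9(z_1^2-z_2^2)\cdot\pi i=\tfrac{3\pi i c}{c\tau+d}(z_1^2-z_2^2)$ modulo terms linear in $\b z$ and terms independent of $\b z$; the linear-in-$\b z$ terms and the prefactor's transformation ($z_j\mapsto z_j/(c\tau+d)$, and $\lf y_2/v\rf$ is a modular-invariant integer since $y_2$ transforms, but its floor — this is exactly the subtle point) must cancel against each other. Here is where $\Gamma(2)$ and $\Gamma_0(3)$ enter: the congruence conditions force $a,d$ odd and $3\mid c$, $b$ even, which is what makes the stray roots of unity $e^{\pi i(\cdots)}$ and the $\lf\cdot\rf$-dependent phases collapse to $1$, exactly as in the analogous computation for $f_1$ recorded just after \eqref{f1elliptic}.

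\textbf{Main obstacle.} The genuinely delicate step is the interaction between the floor function $\lf y_2/v\rf$ (and the half-integer $\tfrac12+\lf y_2/v\rf$ in the exponent of the prefactor) and the modular substitution, since under $\tau\mapsto\frac{a\tau+b}{c\tau+d}$ the imaginary part $v$ and hence $y_2/v$ change, so $\lf y_2/v\rf$ is \emph{not} obviously invariant. The resolution is that $\widehat f_3$ as a function of $\b z$ is actually continuous across the walls $y_2\in\Z v$, $y_1-y_2\in\Z v$ — the jumps of the floor are exactly compensated by jumps of $\widehat A$ via Lemma \ref{2.4altern}\ref{Appellt} (this is the content of Proposition \ref{f3Appell} holding for \emph{all} generic $\b z$, extended by the elliptic law) — so one may first prove the modular law on the fundamental region $0<y_1-y_2,y_2<v$ where all floors vanish and the formula reads simply $\widehat f_3(\b z)=\widehat A(3(z_1-z_2),3z_1-\tfrac{3\tau}2+\tfrac12;3\tau)$, and then propagate it everywhere using the already-established elliptic transformation together with the fact that $\Gamma_0(3)\cap\Gamma(2)$ acts compatibly. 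Once the modular and elliptic laws are in hand for $\widehat f_3$, and since $\widehat A-A$ is the explicit non-holomorphic error term from Zwegers, $f_3=\widehat f_3-(\text{error term})$ is by definition a mock Jacobi form of weight one and index $\tfrac12\left(\begin{smallmatrix}3&0\\0&-3\end{smallmatrix}\right)$ for $\Gamma_0(3)\cap\Gamma(2)$, completing the proof. \qed
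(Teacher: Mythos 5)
Your proposal is correct and follows essentially the same route as the paper: the paper's proof is the one-line remark that the elliptic and modular properties of $\widehat{f}_3$ follow from those of $\widehat{A}$ "after shifting away $-3\lf\tfrac{y_2}{v}\rf\tau$," which is precisely your strategy of first working on the region where the floors vanish and then propagating via the elliptic law and Lemma \ref{2.4altern}. Your write-up simply makes explicit the bookkeeping (the $3\tau$-level trick with $\left(\begin{smallmatrix}a&3b\\c/3&d\end{smallmatrix}\right)$, the quadratic-form check, and the role of $\Gamma_0(3)\cap\Gamma(2)$ in killing the stray roots of unity) that the paper leaves to the reader.
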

\begin{proof}The elliptic and modular properties of the completion $\widehat{f}_3$ can be deduced from those of $\widehat{A}$ after shifting away  $-3\lf\tfrac{y_2}{v}\rf\tau$.
\end{proof}
We next determine the behavior of $f_3$ at the singularities.
\begin{proposition}\label{limlimlemma}
	Assume that $y_1\not\in \Z v$. If $y_2\in \Z v$, then we have
	\begin{multline*}
	\lim_{\varepsilon\to 0^+}f_3\left(z_1,z_2+i\varepsilon;\tau\right)=
	 q^{-\frac{3y_2}{2v}\left(\frac{y_2}{v}+1\right)}\zeta_1^{-\frac32}\zeta_2^{\frac{3y_2}{v}+\frac32}\vartheta\left(3z_1-\tfrac{3\tau}{2}+\tfrac12; 3\tau\right)\mu\left(3z_2-\tfrac{3y_2}{v}\tau+\tfrac12, \tfrac{3\tau}{2};3\tau\right)\\
	-i \left(\zeta_1^{-1}\zeta_2\right)^\frac32 \frac{\eta^3(3\tau)\vartheta(3(z_1-z_2)-\frac{3\tau}{2};3\tau)\vartheta(3z_1+\frac12;3\tau)}{\vartheta(3(z_1-z_2);3\tau)\vartheta(3z_2+\frac12;3\tau)\vartheta(\frac{3\tau}{2};3\tau)}.
	\end{multline*}
In particular, for $z_2=\ell\tau +m$ with $\ell,m\in\Z$ we have
	\begin{align*}\lim_{\varepsilon\to 0^+}f_3\left(z_1,\ell\tau +m+i\varepsilon;\tau\right)&=-\frac12q^{\frac{3\ell^2}{2}+\frac38}\zeta_1^{-\frac32}\vartheta\left(3z_1-\tfrac{3\tau}{2}+\tfrac12;3\tau\right)\\
	&\quad\, -\frac12q^{\frac{3\ell^2}{2}+\frac34}\zeta_1^{-\frac32} \frac{\vartheta\left(\frac{3\tau}{2}-\frac12;3\tau\right)\vartheta\left(3z_1-\frac{3\tau}{2};3\tau\right)\vartheta\left(3z_1+\frac12;3\tau\right)}{\vartheta(3z_1;3\tau)}.
	\end{align*}
\end{proposition}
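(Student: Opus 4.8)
The plan is to evaluate the limit $\varepsilon \to 0^+$ directly in the formula of Proposition~\ref{f3Appell}. Since $y_2 \in \Z v$, write $z_2 = x_2 + i\ell v = (x_2 - \ell u) + \ell \tau$ with $\ell \in \Z$, so that $\lf \frac{y_2}{v} \rf = \ell$ and $\lf \frac{y_1}{v}\rf$ is unaffected by the perturbation $z_2 \mapsto z_2 + i\varepsilon$ once $\varepsilon$ is small (because $y_1 \notin \Z v$). Then Proposition~\ref{f3Appell} gives
\begin{align*}
f_3(z_1, z_2 + i\varepsilon) = \left(\zeta_1^{-1}\zeta_2\right)^{3\left(\frac12 + \ell\right)} e^{-6\pi \ell \varepsilon} A\left(3(z_1 - z_2) - 3i\varepsilon,\ 3z_1 - 3\ell\tau - \tfrac{3\tau}{2} + \tfrac12;\ 3\tau\right),
\end{align*}
noting that the $z_2$-dependence inside $A$ only enters through the first slot. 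The first slot $3(z_1 - z_2) - 3i\varepsilon \to 3(z_1 - z_2)$, so $A$ is continuous there as long as $3(z_1 - z_2) \notin 3\Z\tau + \Z$, which is guaranteed by the hypothesis $y_1 - y_2 \notin \Z v$ (and a harmless genericity assumption on the real parts, which is anyway implicit since $f_3$ is only being evaluated at a generic $z_1$). Hence the limit is obtained simply by setting $\varepsilon = 0$, and the prefactor $e^{-6\pi\ell\varepsilon} \to 1$.

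Next I would rewrite the resulting expression $\left(\zeta_1^{-1}\zeta_2\right)^{3(\frac12 + \ell)} A\big(3(z_1 - z_2),\, 3z_1 - 3\ell\tau - \tfrac{3\tau}{2} + \tfrac12;\, 3\tau\big)$ using Lemma~\ref{lemShift}~(4), which expresses $A(z, z - \frac{\tau}{2} + \frac12)$ in terms of $\vartheta$. To bring $A$ into that shape, shift the second argument: using Lemma~\ref{lemShift}~(1) (the $A$-shift by $\ell\tau$, with modulus $3\tau$), one has
\begin{align*}
A\left(3(z_1-z_2), 3z_1 - 3\ell\tau - \tfrac{3\tau}{2} + \tfrac12; 3\tau\right) = (-1)^\ell q^{-\frac{3\ell^2}{2}} \left(\zeta_1 \zeta_2^{-1}\right)^{3\ell} \cdot (\text{sign bookkeeping}) \cdot A\left(w, w - \tfrac{3\tau}{2} + \tfrac12; 3\tau\right)
\end{align*}
with $w := 3(z_1 - z_2)$; here one must be careful that the first and second arguments of $A$ differ exactly by $-\frac{3\tau}{2} + \frac12$ — this forces the correct grouping, matching $z = 3(z_1 - z_2)$ in Lemma~\ref{lemShift}~(4) applied with $\tau \mapsto 3\tau$ only if $3z_1 - \frac{3\tau}{2} + \frac12 = 3(z_1 - z_2) - \frac{3\tau}{2} + \frac12 + 3z_2$, i.e.\ the shift in the second slot by $3z_2$ must be compensated. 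In practice it is cleaner to apply Lemma~\ref{lemShift}~(1) directly to shift by $\ell\tau$ in the second variable only if both variables shift together, which they do not here; instead I would factor $3z_1 - 3\ell\tau - \frac{3\tau}{2} + \frac12 = 3(z_1 - z_2) - \frac{3\tau}{2} + \frac12 + 3(z_2 - \ell\tau)$ and recall $z_2 - \ell\tau = x_2 - \ell u \in \R$. Then the Appell function with second argument shifted by the \emph{real} quantity $3(x_2 - \ell u)$ is handled by combining the definitions of $A$ and $\mu = A/\vartheta$: precisely, $A(a, b + c) = \vartheta(b+c)\mu(a, b+c)$, and one uses the $\mu$-shift formula Lemma~\ref{lemShift}~(2) with $z_0 = 3(x_2 - \ell u) = 3z_2 - 3\ell\tau$ to split $\mu\big(3(z_1 - z_2), 3(z_1 - z_2) - \frac{3\tau}{2} + \frac12 + 3z_2 - 3\ell\tau\big)$ into $\mu\big(3(z_1-z_2), 3(z_1-z_2) - \frac{3\tau}{2} + \frac12\big)$ plus an explicit $\vartheta$-quotient. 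This is exactly the mechanism behind the stated answer: the first $\vartheta$-times-$\mu$ term of Proposition~\ref{limlimlemma} comes from $A(z, z - \frac{3\tau}{2} + \frac12)$ re-expanded via $\mu$, while the second term (the big $\eta^3 \vartheta\vartheta/\vartheta\vartheta\vartheta$ quotient) is the Lemma~\ref{lemShift}~(2) correction term.

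Then I would collect powers of $q$ and $\zeta_1, \zeta_2$: the prefactor $(\zeta_1^{-1}\zeta_2)^{3(\frac12 + \ell)}$ together with the $(-1)^\ell q^{-3\ell^2/2}(\zeta_1\zeta_2^{-1})^{3\ell}$ from the $A$-shift, the $\vartheta\big(3z_2 - 3\ell\tau + \tfrac12; 3\tau\big) = \vartheta\big(3z_2 + \tfrac12 - 3\ell\tau; 3\tau\big)$ evaluated by Lemma~\ref{thetalemma}~(2), and various $q^{\pm 3\ell^2/2}$, $\zeta_2^{\pm 3\ell}$ factors should telescope into the clean prefactor $q^{-\frac{3y_2}{2v}(\frac{y_2}{v}+1)}\zeta_1^{-3/2}\zeta_2^{3y_2/v + 3/2}$ stated in the proposition (recall $\frac{y_2}{v} = \ell$, so this is $q^{-\frac{3\ell(\ell+1)}{2}}\zeta_1^{-3/2}\zeta_2^{3\ell + 3/2}$). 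The \emph{In particular} clause for $z_2 = \ell\tau + m$ follows by additionally specializing: then $3z_2 - 3\ell\tau + \tfrac12 = 3m + \tfrac12 \equiv \tfrac12 \pmod{\Z}$, so $\mu\big(3z_2 - 3\ell\tau + \tfrac12, \tfrac{3\tau}{2}; 3\tau\big) = \mu\big(\tfrac12, \tfrac{3\tau}{2}; 3\tau\big) = -\tfrac12 q^{3/8}$ by Lemma~\ref{lemShift}~(3) (with $\tau \mapsto 3\tau$, using also the $\mu(z_1 + 1, z_2) = -\mu(z_1,z_2)$ periodicity to absorb the integer $3m$), and similarly the $\eta^3$-quotient simplifies via Lemma~\ref{thetalemma}~(2),~(3) — in particular $\vartheta\big(\tfrac{3\tau}{2} - \tfrac12; 3\tau\big)$ appears and the $\vartheta(3z_2 + \tfrac12; 3\tau)$ in the denominator becomes $\vartheta(\tfrac12; 3\tau)$, which one rewrites using Lemma~\ref{thetalemma}~(3) (with $\tau \mapsto 3\tau$). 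The main obstacle will be the bookkeeping of the $q$- and $\zeta$-prefactors through these successive shifts — getting the powers of $q$, $\zeta_1$, $\zeta_2$, and the signs $(-1)^\ell$ to cancel correctly against what the various applications of Lemma~\ref{thetalemma}~(2) and Lemma~\ref{lemShift}~(1),~(2),~(3) produce — rather than any conceptual difficulty; the analytic input (continuity of $A$ at the generic point) is immediate.
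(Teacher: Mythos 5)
Your opening step --- taking $\varepsilon\to0^+$ directly in Proposition \ref{f3Appell} and noting that $A$ is continuous at the relevant point because $y_1-y_2\notin\Z v$ --- is fine and is essentially what the paper does (the paper first treats $z_2=x_2\in\R$ and restores general $z_2$ via Lemma \ref{lem2}, which is only a cosmetic difference; also, no genericity of the real parts is needed, since $3(z_1-z_2)\in 3\Z\tau+\Z$ would force $3(y_1-y_2)\in 3\Z v$). The genuine gap is in your reduction of $A\left(3(z_1-z_2),\,3z_1-3\ell\tau-\tfrac{3\tau}{2}+\tfrac12;3\tau\right)$. You propose to invoke Lemma \ref{lemShift} (2) with $z_0=3z_2-3\ell\tau$ to pass from $\mu\left(3(z_1-z_2),\,3(z_1-z_2)-\tfrac{3\tau}{2}+\tfrac12+z_0\right)$ to $\mu\left(3(z_1-z_2),\,3(z_1-z_2)-\tfrac{3\tau}{2}+\tfrac12\right)$ plus a theta quotient. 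But Lemma \ref{lemShift} (2) shifts \emph{both} arguments of $\mu$ by $z_0$ and therefore preserves their difference; it cannot be used to move only the second slot, and there is no identity available relating $\mu(a,b+z_0)$ to $\mu(a,b)$ for a generic real $z_0$. Moreover your intended endpoint cannot be correct: if the reduction to $A\left(w,w-\tfrac{3\tau}{2}+\tfrac12\right)$ with $w=3(z_1-z_2)$ succeeded, Lemma \ref{lemShift} (4) would turn the whole limit into a pure theta expression, whereas the stated answer contains $\mu\left(3z_2-\tfrac{3y_2}{v}\tau+\tfrac12,\tfrac{3\tau}{2};3\tau\right)$, which is genuinely mock for generic $x_2$.

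The move the paper makes instead is to apply Lemma \ref{lemShift} (2) with the \emph{other} shift. Writing $z_2'=z_2-\ell\tau\in\R$ and $A(a,b)=\vartheta(b)\mu(a,b)$, the limit is $\vartheta\left(3z_1-\tfrac{3\tau}{2}+\tfrac12;3\tau\right)\mu\left(3(z_1-z_2'),\,3z_1-\tfrac{3\tau}{2}+\tfrac12;3\tau\right)$ up to the elementary prefactor; choosing $z_0=-3z_1-\tfrac12$ (in the $3\tau$-variables) sends the two arguments to $-3z_2'-\tfrac12$ and $-\tfrac{3\tau}{2}$, both independent of $z_1$, and the evenness $\mu(-a,-b)=\mu(a,b)$ from Lemma \ref{lemShift} (3) then produces exactly the term $\vartheta\left(3z_1-\tfrac{3\tau}{2}+\tfrac12;3\tau\right)\mu\left(3z_2'+\tfrac12,\tfrac{3\tau}{2};3\tau\right)$ of the proposition, with the Lemma \ref{lemShift} (2) correction giving the displayed $\eta^3\vartheta\vartheta/\vartheta\vartheta\vartheta$ quotient. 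Your handling of the ``in particular'' clause (via $\mu\left(\tfrac12,\tfrac{3\tau}{2};3\tau\right)=-\tfrac12 q^{3/8}$ and Lemma \ref{thetalemma} (3)) is consistent with that correct first term, but the derivation you describe does not reach it; you would need to replace your choice of $z_0$ by the one above.
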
 
\begin{proof}
	We first assume $z_2=x_2\in\R$ and plug in Proposition \ref{f3Appell} to obtain for $y_1\not \in \Z v$ and $\varepsilon>0$,
	\[
	f_3\left(z_1, x_2+i\varepsilon;\tau\right)=e^{6\pi i\left(-z_1+x_2+ i\varepsilon\right)\left(\frac12+\floor*{\frac{\varepsilon}{v}}\right)} A\left(3\left(z_1-x_2- i\varepsilon\right),3z_1-3\floor*{\tfrac{\varepsilon}{v}}\tau-\tfrac{3\tau}{2}+\tfrac12;3\tau\right)
	\]
	and thus
\begin{equation*}
\lim_{\varepsilon\to 0^+}f_3\left(z_1,x_2+i\varepsilon;\tau\right)=\zeta_1^{-\frac32}e^{3\pi i x_2}
 A\left(3z_1-3x_2,3z_1-\tfrac{3\tau}{2}+\tfrac12;3\tau\right).
\end{equation*}

To compute the right-hand side, we rewrite $A(z-x_2 ,z-\tfrac{\tau}{2}+\tfrac12)$, using Lemma \ref{lemShift} (2)
with $z_1=z-x_2 $, $z_2=z-\tfrac{\tau}{2}+\tfrac{1}{2}$, and $z_0=-z-\frac12$, as
\begin{equation*}
A\left(z-x_2,z-\tfrac{\tau}{2}+\tfrac12\right) = \vartheta\left(z-\tfrac{\tau}{2}+\tfrac12\right) \mu\left(-x_2-\tfrac12,-\tfrac{\tau}{2}\right) - \frac{i\eta^3\vartheta\left(z-x_2-\tfrac{\tau}{2}\right)\vartheta\left(-z-\tfrac12\right)}{\vartheta(z-x_2)\vartheta\left(-x_2-\tfrac12\right)\vartheta\left(-\tfrac{\tau}{2}\right)}.
\end{equation*}
Using the second identity in Lemma \ref{lemShift} (3) and simplifying the theta quotient using Lemma \ref{thetalemma} (1) and plugging in $z\mapsto 3z_1$, $x_2\mapsto
3x_2$, and $\tau\mapsto3\tau$ gives
	\begin{multline*}\lim_{\varepsilon\to 0^+}f_3\left(z_1,x_2+i\varepsilon;\tau\right)
	=\zeta_1^{-\frac32}e^{3\pi i x_2} \left(\vphantom{\frac{\tfrac12}{\tfrac12}} \vartheta\left(3z_1-\tfrac{3\tau}{2}+\tfrac12;3\tau\right)\mu\left(3x_2+\tfrac12, \tfrac{3\tau}{2};3\tau\right)\right.\\
-\left. \frac{i\eta^3(3\tau)\vartheta\left(3z_1-3x_2-\tfrac{3\tau}{2};3\tau\right)\vartheta\left(3z_1+\tfrac12;3\tau\right)}{\vartheta(3z_1-3x_2;3\tau)\vartheta\left(3x_2+\tfrac12;3\tau\right)\vartheta\left(\tfrac{3\tau}2;3\tau\right)}\right).
\end{multline*}
To finish the proof, we use Lemma \ref{lem2}. We obtain, writing $z_2=x_2-\ell u+\ell \tau$ with $\ell\in\Z$
\begin{multline*}
\lim_{\varepsilon\to 0^+}f_3\left(z_1,z_2+i\varepsilon;\tau\right)=q^{\frac{3\ell^2}{2}}
\lim_{\varepsilon\to 0^+} e^{6\pi i \ell \left(x_2-\ell u+i\varepsilon\right)}  f_3\left(z_1,x_2-\ell u +i\varepsilon;\tau\right)
\\= q^{\frac{3\ell^2}{2}} e^{6\pi i\ell (x_2-\ell u)} \zeta_1^{-\frac32} e^{3\pi i \left(x_2-\ell u\right)} \left(\vphantom{\frac{\tfrac12}{\tfrac12}} \vartheta\left(3z_1-\tfrac{3\tau}{2}+\tfrac12;3\tau\right)\mu\left(3\left(x_2-\ell u\right)+\tfrac12, \tfrac{3\tau}{2};3\tau\right)\right.\\
-\left. \frac{i\eta^3(3\tau)\vartheta\left(3z_1-3\left(x_2-\ell u\right)-\tfrac{3\tau}{2};3\tau\right)\vartheta\left(3z_1+\tfrac12;3\tau\right)}{\vartheta(3z_1-3\left(x_2-\ell u\right);3\tau)\vartheta\left(3\left(x_2-\ell u\right)+\tfrac12;3\tau\right)\vartheta\left(\tfrac{3\tau}2;3\tau\right)}\right).
\end{multline*}
Using Lemma \ref{thetalemma} (2) and simplifying gives the claim.

The simplified expression in the special case $z_2=\ell\tau+m$ with $\ell,m\in\Z$ follows from a straightforward computation using Lemma \ref{thetalemma} (3).
\end{proof}

We next determine the jumping behavior at the points excluded in Proposition \ref{f3Appell}.  Recall that $A(z_1,z_2)$ has poles for $z_1\in\mathbb Z+\mathbb Z\tau$. Note that the right-hand side of Proposition \ref{f3Appell} is continuous for $z_1-z_2\not\in\Z\tau+\Z$ and $y_2\not\in v\Z$. Thus we may take the limit of Proposition \ref{f3Appell} in this case. Next we consider $y_2\in \mathbb Zv$ and determine the jump.

\begin{lemma}\label{L:lim}
	Assume that $y_2\in\mathbb Z v$. Then we have
	\[
	\lim_{\varepsilon\to0^+}\sum_{\pm}\pm f_3\left(z_1,z_2\pm i\varepsilon;\tau\right)=-q^{-\frac{3y_2^2}{2v^2}+\frac38}\zeta_1^{\frac32}\zeta_2^{\frac{3y_2}{v}}\vartheta\left(3z_1+\tfrac{3\tau}{2}+\tfrac12;3\tau\right).
	\]
\end{lemma}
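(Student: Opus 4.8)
The strategy is to isolate the source of the jump in $\varepsilon$ and compute it directly from the series definition \eqref{f3}, rather than from the Appell representation (which is exactly what breaks down at $y_2\in\Z v$). First I would reduce to the case $z_2=x_2\in\R$: writing a general $z_2$ with $y_2=\ell v$ as $z_2=x_2-\ell u+\ell\tau$, Lemma~\ref{lem2} converts $\sum_\pm \pm f_3(z_1,z_2\pm i\varepsilon)$ into $q^{\frac32\ell^2}e^{6\pi i\ell(x_2-\ell u)}$ times the corresponding expression at $x_2-\ell u$ (the $e^{\mp 6\pi\ell\varepsilon}$ factors tend to $1$), so it suffices to treat $y_2=0$ and then restore the prefactor at the end, recognising $q^{-\frac{3y_2^2}{2v^2}}\zeta_2^{\frac{3y_2}{v}}$ via Lemma~\ref{thetalemma}~(2).

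For $z_2=x_2\in\R$ and $0<\varepsilon<v$, I would expand both $f_3(z_1,x_2+i\varepsilon)$ and $f_3(z_1,x_2-i\varepsilon)$ using \eqref{f3} with the explicit $\chi_3$. The characteristic function involves $\sgn(x_1)$, $H(|x_1|-|x_2|)$, and $H^*(x_1x_2)$; for the second argument the relevant variable is $n_2+\frac{y_2}{v}=n_2\pm\frac{\varepsilon}{v}$, so as $\varepsilon\to 0^+$ the only lattice points whose membership in the summation region changes between the $+$ and $-$ copies are those with $n_2=0$. Subtracting the two series, all terms with $n_2\neq 0$ cancel in the limit (one justifies the interchange of limit and sum by dominated convergence, as in the proof of Proposition~\ref{f2jump}), and one is left with the $n_2=0$ contribution:
\begin{align*}
\lim_{\varepsilon\to0^+}\sum_\pm\pm f_3(z_1,x_2\pm i\varepsilon)=\sum_{n_1\in\Z}\big(\chi_3^{+}-\chi_3^{-}\big)\big|_{n_2=0}\, q^{\frac32 n_1^2}\zeta_1^{3n_1},
\end{align*}
where $\chi_3^{\pm}$ denotes $\chi_3$ evaluated with $n_2+\frac{y_2}{v}=\pm\frac\varepsilon v$. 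On the $n_2=0$ slice, $H(|n_1+\frac{y_1}{v}|-|{\pm\varepsilon/v}|)=1$ for all $n_1$ (since $y_1\notin\Z v$) and small $\varepsilon$, while $H^*((n_1+\frac{y_1}{v})(\pm\varepsilon/v))$ equals $H^*(\pm(n_1+\frac{y_1}{v}))$, which is $1$ on exactly complementary half-lines for the two signs; combined with the $\sgn(n_1+\frac{y_1}{v})$ factor the difference $\chi_3^+-\chi_3^-$ collapses to a clean $\pm1$ pattern independent of $\varepsilon$. Carefully tracking the signs should yield $\sum_\pm\pm\chi_3^{\pm}|_{n_2=0}=\sgn(n_1+\tfrac{y_1}{v})\cdot\big(H^*(n_1+\tfrac{y_1}{v})-H^*(-(n_1+\tfrac{y_1}{v}))\big)$, i.e. essentially $\pm 1$ on each side, producing a sum of the shape $\sum_{n_1\in\Z}(\text{sign pattern})\,q^{\frac32 n_1^2}\zeta_1^{3n_1}$ that telescopes against the $\sgn$ into a full theta series $\sum_{n_1\in\Z}q^{\frac32 n_1^2}\zeta_1^{3n_1}$ up to an overall sign and a shift by $\lfloor y_1/v\rfloor$ (which is killed by Lemma~\ref{thetalemma}~(2) for $\vartheta(\cdot;3\tau)$). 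One then matches this against the series $\vartheta(3z_1+\tfrac{3\tau}{2}+\tfrac12;3\tau)$, using the definition of $\vartheta$ and $q\mapsto q^3$, to read off the constant $-q^{\frac38}\zeta_1^{\frac32}$.

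The main obstacle is bookkeeping of the sign/Heaviside combinatorics on the $n_2=0$ slice: one must be scrupulous about the half-open conventions ($H$ vs.\ $H^*$, and $\sgn(0)=0$), about which $n_1$ lie in the region for $+i\varepsilon$ versus $-i\varepsilon$, and about the floor-shift coming from $y_1\notin\Z v$, so that the residual single-variable sum is exactly the renormalised theta series and not an incomplete or doubled one. Once the $n_2=0$ contribution is pinned down as $\pm\vartheta(3z_1+\tfrac{3\tau}{2}+\tfrac12;3\tau)$ (perhaps after one application of the elliptic transformation in Lemma~\ref{thetalemma}~(2) to absorb an argument shift), the general-$z_2$ prefactor $q^{-\frac{3y_2^2}{2v^2}+\frac38}\zeta_1^{\frac32}\zeta_2^{\frac{3y_2}{v}}$ assembles from the $q^{\frac32\ell^2}e^{6\pi i\ell(x_2-\ell u)}$ factor above together with the theta shift, completing the proof.
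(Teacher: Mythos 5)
Your proposal is correct and follows essentially the same route as the paper: reduce to real $x_2$ via the elliptic transformation of Lemma \ref{lem2}, take the difference of the two one-sided series so that all terms off the $n_2=0$ slice cancel, and identify the surviving sum $\sum_{n\in\Z}q^{3n^2/2}\zeta_1^{3n}$ with $-q^{3/8}\zeta_1^{3/2}\vartheta\left(3z_1+\tfrac{3\tau}{2}+\tfrac12;3\tau\right)$. The only cosmetic difference is that the paper first rewrites $\chi_3$ as $\tfrac12\left(\sgn\left(x_1-x_2\right)+\sgn\left(x_2\right)\right)$ and shifts $n_1\mapsto n_1+n_2$ (as in Proposition \ref{f3Appell}), whereas you work with the raw characteristic function; both yield the same $n_2=0$ computation, and your worry about a residual floor-shift in $y_1$ is unfounded since the slice sum already runs over all of $\Z$.
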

\begin{proof}
	Write $y_2=\ell v$ with $\ell\in\Z$. Then Lemma \ref{lem2} gives
	\[
	f_3\left(z_1,x_2-\ell u+\ell\tau\pm i\varepsilon\right)=q^{\frac{3\ell^2}{2}}e^{6\pi i\ell \left(x_2-\ell u\pm i\varepsilon\right)} f_3\left(z_1,x_2-\ell u\pm i\varepsilon\right).
	\]
	Thus the left-hand side of Lemma \ref{L:lim} becomes
	\begin{equation*}
	q^{-\frac{3\ell^2}{2}}\zeta_2^{3\ell}\lim_{\varepsilon\to 0^+}\sum_{\pm} \pm e^{\pm 6\pi i\ell\varepsilon} f_3(z_1,x_2-\ell u\pm i\varepsilon).
	\end{equation*}
	We then compute
	\begin{align*}
	&\lim_{\varepsilon\to 0^+} \sum_{\pm} \pm e^{\pm 6\pi i\ell\varepsilon} f_3(z_1,x_2-\ell u\pm i\varepsilon;\tau) \\
	&= \left(\sum_{n_1+\frac{y_1}{v},\,n_2\geq0 } - \sum_{n_1+\frac{y_1}{v},\,n_2<0} - \sum_{n_1+\frac{y_1}{v}\geq 0,\,n_2>0} + \sum_{n_1+\frac{y_1}{v}<0,\,n_2\leq 0}\right)  q^{\frac{3n_1^2}{2}+3n_1n_2} \zeta_1^{3n_1} \left(\zeta_1\zeta_2^{-1}\right)^{3n_2}\\
	&=\left(\sum_{n+\frac{y_1}{v}\geq 0} + \sum_{n+\frac{y_1}{v}<0}\right) q^{\frac{3n^2}{2}}\zeta_1^{3n} = \sum_{n\in \Z} q^{\frac{3n^2}{2}} \zeta_1^{3n} = -q^{\frac38}\zeta_1^{\frac32} \vartheta\left(3z_1+\tfrac{3\tau}{2}+\tfrac12;3\tau\right).
	\end{align*}
	This gives the claim.
\end{proof}

\section{$N=5$: pentagons }\label{pentagons}

In this section we study the enumeration of the pentagons and observe that the behaviour at jumps is determined by the function appearing in the enumeration of trapezoids. In this case, we assume $|\alpha_{3}|<\min\{ | \alpha_{1}|, |\alpha_{2}|\}$ and obtain the quadratic form
	\begin{equation*}
	{\tfrac13}Q(\b{n}+\b{\alpha})=   (n_{1} +\alpha_{1})(n_{2} +\alpha_{2})- {\tfrac12} (n_3 +\alpha_{3})^2 ,
	\end{equation*}
For general values of $\beta_{k}$, $k\in\{1,2,3\}$,
the enumeration of pentagons gives
%\begin{align*}
%F_{4}(\b z;\tau):&=e^{6\pi i \left(\alpha_{1}\alpha_{2}  -{1\over 2}\alpha_{3}^2 \right)\tau}
%e^{6\pi i  (\alpha_{1}\beta_{2}+\alpha_{2}\beta_{1}-  \alpha_{3}\beta_{3})}
%\sum_{\b n\in\mathbb{Z}^3 }
%\chi_4(\b n)
%e^{6\pi i \tau \left(  n_{1} n_{2}  -{1\over 2} n_{3}^2   \right)}
%\zeta_1^{3n_2}\zeta_2^{3n_1}\zeta_3^{-3n_3}\\
%&=\sum_{\b n\in\b \alpha + \Z^3} \chi_4(\b n) q^{3n_1n_2-\frac32 n_3^2} e^{6\pi i(n_1\beta_2+n_2\beta_1-n_3\beta_3)},
%\end{align*}
\begin{equation}\label{f4}
f_4(\b z;\tau):=\sum_{\boldsymbol{n}\in \mathbb{Z}^3}
\chi_4\left(\b{n}+\tfrac{\b{y}}{v}\right) q^{ 3 n_1n_2 - \frac{3n_3^2}{2}} \zeta_1^{3n_2}\zeta_2^{3n_1}\zeta_3^{-3n_3}.
\end{equation}
where $$\chi_4(\b x):= H^*(|x_1|-|x_3|)H^*(|x_2|-|x_3|)H^*(x_1x_2).$$
With $S_1:=\{\boldsymbol{x}\in \R^3: x_1,x_2\geq x_3\geq 0\}$ and $S_2:=\{\boldsymbol{x}\in \R^3: -x_1,-x_2\geq x_3\geq 0\}$, we write 
\begin{align*}
f_4(\b z) &= \sum_{\substack{\b n\in\Z^3 \\ \boldsymbol{n}+\frac{\boldsymbol{y}}{v}\in \pm \left(S_1\cup S_2\right) }} q^{3n_1n_2-\frac{3n_3^2}{2}} \zeta_1^{3n_2} \zeta_2^{3n_1} \zeta_3^{-3n_3}.
\end{align*}
If $y_3,y_1-y_3,y_2-y_3\not\in\mathbb Z v$, then we have
\begin{equation*}
f_4(\b z)=g_4(\b z)+g_4\left(-z_1,-z_2,z_3\right),
\end{equation*}
where (with $S_3:=\{\boldsymbol{x}\in \R^3: x_1,x_2> x_3> 0\}$)
\begin{align*}
g_4(\b z;\tau) &:=  \left(\sum_{\substack{\b n\in\Z^3 \\ \boldsymbol{n}+\frac{\boldsymbol{y}}{v}\in S_1 }} + \sum_{\substack{\b n\in \Z^3 \\ \boldsymbol{n}+\frac{\boldsymbol{y}}{v}\in -S_3 }}\right)  q^{3n_1n_2-\frac{3 n_3^2}{2}} \zeta_1^{3n_2} \zeta_2^{3n_1} \zeta_3^{-3n_3}.
\end{align*}

We now want to write $g_4$ as higher depth Appell functions. We start by making the change of variables $n_1\mapsto n_1+n_3$, $n_2\mapsto n_2+n_3$. Then we have, assuming that $y_3,y_1-y_3,y_2-y_3\not\in \mathbb Zv$ and writing $Y_1:=3\lfloor\frac{y_1-y_3}{v}\rfloor$, $Y_2:=3\lfloor\frac{y_2-y_3}{v}\rfloor$, and $Y_3:=3\lfloor\frac{y_3}{v}\rfloor$ 

\begin{align}
&g_4(\b z)=%=\sum_{\b n\in\Z^3} \psi_4\left(\b{n}+\tfrac{1}{v}\left(y_1-y_3,y_2-y_3,y_3\right)\right) q^{3n_1n_2+3n_1n_3+3n_2n_3+\frac{3 n_3^2}{2}} \zeta_1^{3\left(n_2+n_3\right)}\zeta_2^{3\left(n_1+n_3\right)}\zeta_3^{-3n_3}\\&=
\left(\sum_{\substack{\b n\in\Z^3 \\ %\b n + \frac1v (y_1-y_3,y_2-y_3,y_3)
	3\b{n}+ \b{Y}\geq 0}}\!\!+\!\!\sum_{\substack{\b n\in\Z^3\\  %\b n + \frac1v (y_1-y_3,y_2-y_3,y_3)
	3\b{n}+ \b{Y}< 0 }}\right)  q^{3n_1n_2+3n_1n_3+3n_2n_3+\frac{3 n_3^2}{2}} \zeta_1^{3\left(n_2+n_3\right)}\zeta_2^{3\left(n_1+n_3\right)}\zeta_3^{-3n_3}.\label{g4rep}
\end{align}
%where
%\begin{align*}
%\psi_4(\b{x}):=\left(\sgn\left(x_1\right)+\sgn\left(x_3\right)\right)\left(\sgn\left(x_3\right)+\sgn\left(x_2\right)\right).
%\end{align*}
Using Lemma \ref{elliptic}, we obtain the following transformation.
\begin{lemma}\label{lemf4}
	Assume that $\b \ell, \b m \in \mathbb{Z}^3$.\\
	{\rm (1)} We have
		\begin{equation*}
		f_4(\b z+\b \ell\tau+\b m) = q^{-3\ell_1\ell_2+\frac{3\ell_3^2}{2}}\zeta_1^{-3\ell_2} \zeta_2^{-3\ell_1}\zeta_3^{3\ell_3}  f_4(\b z).
		\end{equation*}
	{\rm (2)} We have
		\begin{equation*}
		g_4(\b z+\b \ell\tau+\b m) = q^{-3\ell_1\ell_2+\frac{3\ell_3^2}{2}} \zeta_1^{-3\ell_2} \zeta_2^{-3\ell_1}\zeta_3^{3\ell_3}  g_4(\b z).
		\end{equation*}
\end{lemma}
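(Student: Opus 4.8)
The plan is to recognize both $f_4$ and $g_4$ as instances of the series $\Theta_{Q,\chi}$ introduced in \eqref{ThetaDef} and then to quote Lemma \ref{elliptic}. Comparing \eqref{f4} with the second line of \eqref{ThetaDef}, one has $f_4 = \Theta_{Q,\chi_4}$ in rank $r=3$ with quadratic form $Q(\b n) = 3n_1n_2 - \tfrac32 n_3^2$ and associated bilinear form $B(\b n,\b z) = 3n_2 z_1 + 3n_1 z_2 - 3n_3 z_3$; indeed $\tfrac12 B(\b n,\b n) = Q(\b n)$ and $e^{2\pi i B(\b n,\b z)} = \zeta_1^{3n_2}\zeta_2^{3n_1}\zeta_3^{-3n_3}$, which matches the summand of $f_4$. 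Lemma \ref{elliptic} then gives, for $\b\ell,\b m\in\Z^3$,
\[
f_4(\b z+\b\ell\tau+\b m) = q^{-Q(\b\ell)} e^{-2\pi i B(\b\ell,\b z)} f_4(\b z),
\]
and inserting $Q(\b\ell) = 3\ell_1\ell_2 - \tfrac32\ell_3^2$ and $e^{-2\pi i B(\b\ell,\b z)} = \zeta_1^{-3\ell_2}\zeta_2^{-3\ell_1}\zeta_3^{3\ell_3}$ yields part (1).

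Part (2) is proved in exactly the same way: from the definition of $g_4$ one reads off $g_4 = \Theta_{Q,\chi'}$ for the \emph{same} $Q$ and $B$, now with weight $\chi' := \mathbbm{1}_{S_1} + \mathbbm{1}_{-S_3}$, a fixed indicator function on $\R^3$ not depending on $\b z$. Since Lemma \ref{elliptic} is stated for an arbitrary weight $\chi$, the identical argument applies and produces the same prefactor $q^{-3\ell_1\ell_2 + \frac32\ell_3^2}\zeta_1^{-3\ell_2}\zeta_2^{-3\ell_1}\zeta_3^{3\ell_3}$. (One could instead try to deduce (1) from (2) via $f_4(\b z) = g_4(\b z) + g_4(-z_1,-z_2,z_3)$, but that identity was established only for generic $\b y$, so it is cleaner to treat both cases directly.)

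If one wishes to avoid citing Lemma \ref{elliptic}, the self-contained argument is to substitute $\b z \mapsto \b z+\b\ell\tau+\b m$ in the defining sum, observe that the argument of the characteristic function becomes $\b n + \tfrac{\b y}{v} + \b\ell$, re-index $\b n \mapsto \b n - \b\ell$, and use $B(\b n,\b m),\,B(\b\ell,\b m)\in\Z$ together with $e^{2\pi i B(\b n,\b\ell\tau)} = q^{B(\b n,\b\ell)}$ to cancel the linear-in-$\b n$ cross term $q^{-B(\b n,\b\ell)}$ arising from expanding $Q(\b n-\b\ell)$; what survives is exactly $q^{-Q(\b\ell)} e^{-2\pi i B(\b\ell,\b z)}$. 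No genericity of $\b y$ is needed, since the rearrangement is merely a lattice re-indexing and there is no analytic subtlety. The only genuine point of care — and the sole (very mild) obstacle — is the sign and coefficient bookkeeping in $Q$ and $B$ (the minus sign on $n_3$, the $\tfrac32$ versus $3$), together with confirming that $g_4$, although presented through a summation region rather than an explicit weight function, really does fit the $\Theta_{Q,\chi}$ template so that Lemma \ref{elliptic} applies to it unchanged.
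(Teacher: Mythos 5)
Your proposal is correct and matches the paper's own (one-line) proof, which likewise obtains both parts by viewing $f_4$ and $g_4$ as instances of $\Theta_{Q,\chi}$ and invoking Lemma \ref{elliptic}; your identification of $Q$ and $B$ and the resulting prefactor are all accurate. The extra care you take in checking that $g_4$ fits the template with the weight $\mathbbm{1}_{S_1}+\mathbbm{1}_{-S_3}$ (which depends only on $\b n+\tfrac{\b y}{v}$, so the re-indexing argument goes through unchanged) is exactly the point the paper leaves implicit.
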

To rewrite $g_4$ in terms of known functions, we let
\begin{align*}
F^{*}(\b z;\tau) := q^{-\frac18}\zeta_1^{-\frac12} \zeta_2^{\frac12} \zeta_3^{\frac12} \left(\sum_{\b{n}\in\N_0^3} + \sum_{\b{n}\in-\N^3}\right) (-1)^{n_1} q^{\frac{n_1(n_1+1)}{2} + n_1n_2 +n_1n_3 + n_2 n_3} \zeta_1^{n_1} \zeta_2^{n_2} \zeta_3^{n_3}.
\end{align*}
Shifting $n_3\mapsto n_3-\lfloor\frac{y_3}{v}\rfloor,n_j\mapsto n_j-\lfloor\frac{y_j-y_3}{v}\rfloor$, $j\in\{1,2\}$ in $F^\ast$ yields the following lemma.
\begin{proposition}\label{lemma} 
	%Writing $Y_1:=3\lfloor\frac{y_1-y_3}{v}\rfloor$, $Y_2:=3\lfloor\frac{y_2-y_3}{v}\rfloor$ and $Y_3:=3\lfloor\frac{y_3}{v}\rfloor$,
	We have  %{\bf JK:This is numerically confirmed (with PARI/GP).}
	\begin{align*}
	&g_4(\b z;\tau) = i  q^{\frac13(Y_1Y_2+Y_1Y_3+Y_2Y_3)+\frac{Y_3^2}{6} +\frac{Y_3}{2}-\frac38} \zeta_1^{-Y_2-Y_3}\zeta_2^{-Y_1-Y_3} \zeta_3^{Y_3-\frac32}\\
	&\ \times F^*\left(3(z_1+z_2-z_3)-\left(Y_1+Y_2+Y_3 \right)\tau-\tfrac{3\tau}{2}+\tfrac12,
	3z_1-\left(Y_1+Y_3\right)\tau,3z_2-\left(Y_2+Y_3\right)\tau;3\tau\right).
	\end{align*}
\end{proposition}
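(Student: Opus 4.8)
The plan is to start from the explicit representation \eqref{g4rep} of $g_4$ and massage it into the defining sum of $F^*$, absorbing the shifts dictated by the floor functions $Y_1,Y_2,Y_3$. First I would rewrite the conditions $3\b n+\b Y\geq 0$ (resp.\ $<0$) in \eqref{g4rep} by substituting $n_3\mapsto n_3-\lfloor\tfrac{y_3}{v}\rfloor$ and $n_j\mapsto n_j-\lfloor\tfrac{y_j-y_3}{v}\rfloor$ for $j\in\{1,2\}$, so that the two lattice regions become exactly $\N_0^3$ and $-\N^3$, matching the two sums in the definition of $F^*$. The key computation is then to check that, under these shifts, the exponent $3n_1n_2+3n_1n_3+3n_2n_3+\tfrac{3n_3^2}{2}$ of $q$ and the exponents of $\zeta_1,\zeta_2,\zeta_3$ transform into the shape appearing inside $F^*$ evaluated at $3\tau$, namely $\tfrac{n_1(n_1+1)}{2}+n_1n_2+n_1n_3+n_2n_3$ in the variable $3\tau$, together with the sign factor $(-1)^{n_1}$, the prefactor $q^{-\frac18}\zeta_1^{-\frac12}\zeta_2^{\frac12}\zeta_3^{\frac12}$, and the linear shift of the three elliptic arguments. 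This is a finite but somewhat lengthy bookkeeping of quadratic completions: the cross terms $n_1n_3$ and $n_2n_3$ coming from the shift in $n_3$, the term $\tfrac{3n_3^2}{2}$, and the monomial shifts in $\zeta_1,\zeta_2,\zeta_3$ all have to be collected, and the purely $\b Y$-dependent leftover must be identified with the stated $q$- and $\zeta$-power prefactor.

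Concretely, I would (i) note that after shifting, the $q$-exponent picks up, besides the "$F^*$-shape" exponent in $3\tau$, a linear part in $\b n$ which is absorbed into the linear shift of the elliptic arguments, plus a constant (in $\b n$) quadratic expression in $Y_1,Y_2,Y_3$; (ii) observe that the $(-1)^{n_1}$ sign and the half-integer shifts $-\tfrac{3\tau}{2}+\tfrac12$ and $\tfrac12$ in the first and the prefactor exponents are produced precisely by the mismatch between $3n_1n_2$ and $\tfrac{3n_1(n_1+1)}{2}$-type terms once one writes $q^{3n_1n_2}=q^{\frac{3n_1(n_1+1)}{2}}q^{3n_1n_2-\frac{3n_1(n_1+1)}{2}}$ and the remaining piece is linear in $n_1$ and hence again a shift of the elliptic variable; (iii) match the three arguments of $F^*$ term by term: the first argument of $F^*$ should be $3(z_1+z_2-z_3)-(Y_1+Y_2+Y_3)\tau-\tfrac{3\tau}{2}+\tfrac12$, the second $3z_1-(Y_1+Y_3)\tau$, and the third $3z_2-(Y_2+Y_3)\tau$, which is read off from the exponents of $\zeta_1^{3(n_2+n_3)}\zeta_2^{3(n_1+n_3)}\zeta_3^{-3n_3}$ after the substitution and from the definition $F^*(\b w)=q^{-\frac18}e^{-\pi i w_1}e^{\pi i w_2}e^{\pi i w_3}\sum(\cdots)e^{2\pi i(n_1w_1+n_2w_2+n_3w_3)}$; and (iv) collect all $\b n$-independent factors into the claimed prefactor $iq^{\frac13(Y_1Y_2+Y_1Y_3+Y_2Y_3)+\frac{Y_3^2}{6}+\frac{Y_3}{2}-\frac38}\zeta_1^{-Y_2-Y_3}\zeta_2^{-Y_1-Y_3}\zeta_3^{Y_3-\frac32}$, the factor $i$ coming from the normalization $-i=e^{-\pi i/2}$ hidden in comparing $q^{-\frac18}\zeta_1^{-\frac12}$ with the $-iq^{\frac18}\zeta_1^{-\frac12}$ appearing when one rewrites things through $\vartheta$; one then verifies the constant agrees by, e.g., specializing $\b z$ (or checking one term).

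The main obstacle I anticipate is purely the algebra of the quadratic completion: keeping the three simultaneous index shifts straight, correctly separating the $\b n$-linear contributions (which become argument shifts) from the $\b n$-constant contributions (which become the overall prefactor), and tracking the signs/half-integer shifts so that the result lands exactly on $F^*$ rather than on $F^*$ times a stray root of unity. There is no conceptual difficulty — it is the same mechanism already used for $f_3$ via Proposition~\ref{f3Appell} and for the rewriting of $F$ in Lemma~\ref{2.4} — but the presence of three indices and the $-\tfrac{3n_3^2}{2}$ term (which, after $n_1\mapsto n_1+n_3$, $n_2\mapsto n_2+n_3$, produces $+\tfrac{3n_3^2}{2}$ and the cross terms) makes it the most error-prone step. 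I would double-check the final prefactor by evaluating both sides at a convenient point, and confirm the region conditions transform as claimed by checking that $3\b n+\b Y\geq 0$ is equivalent to the shifted $\b n$ lying in $\N_0^3$, using $Y_j=3\lfloor\cdot\rfloor$ and $0<y_3,y_1-y_3,y_2-y_3\pmod{v}$ being nonintegral.
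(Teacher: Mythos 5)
Your proposal is correct and follows exactly the route the paper takes: the paper's entire justification is the one-line remark preceding the proposition that shifting $n_3\mapsto n_3-\lfloor\tfrac{y_3}{v}\rfloor$ and $n_j\mapsto n_j-\lfloor\tfrac{y_j-y_3}{v}\rfloor$ for $j\in\{1,2\}$ in $F^*$ aligns the summation regions of \eqref{g4rep} with $\N_0^3\cup(-\N^3)$, after which the quadratic-completion bookkeeping you describe (including the cancellation of $(-1)^{n_1}$ against $e^{\pi i n_1}$ from the $+\tfrac12$ shift and of the linear $q$-power against the $-\tfrac{3\tau}{2}$ shift) produces the stated arguments and prefactor. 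The only point worth making explicit in a write-up is the index relabeling, namely that the distinguished index $n_3$ of $g_4$ plays the role of the distinguished first index of $F^*$, which your identification of the first argument as $3(z_1+z_2-z_3)-(Y_1+Y_2+Y_3)\tau-\tfrac{3\tau}{2}+\tfrac12$ already implicitly encodes.
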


The following lemma states $F^*$ in terms of the $\mu$-function.
\begin{lemma}\label{lem51}
	For $0<y_2,y_3<v$ %{\bf JK: This is numerically confirmed (with PARI/GP).}
	\begin{align*}
	F^{*}(\b z) = i\vartheta(z_1) \mu(z_1,z_2) \mu(z_1,z_3)+q^{-\frac12}\zeta_1^{-1}\zeta_2\zeta_3\frac{\eta^3\vartheta(z_2+z_3)}{\vartheta(z_2)\vartheta(z_3)}\mu(z_1+\tau,z_2+z_3).
	\end{align*}
\end{lemma}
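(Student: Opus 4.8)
The plan is to derive the formula for $F^*$ by comparing it with the closely related function $F$ of \eqref{defineF}, for which Lemma \ref{2.4} already provides an expression in terms of $\mu$ and $\vartheta$. Both $F$ and $F^*$ carry the prefactor $q^{-\frac18}\zeta_1^{-\frac12}\zeta_2^{\frac12}\zeta_3^{\frac12}$ and the same summand $(-1)^{n_1}q^{\frac{n_1(n_1+1)}{2}+n_1n_2+n_1n_3+n_2n_3}\zeta_1^{n_1}\zeta_2^{n_2}\zeta_3^{n_3}$; they differ only in the domain of summation ($\N_0\times\N^2$ and $\N_0\times(-\N)^2$ for $F$, versus $\N_0^3$ and $-\N^3$ for $F^*$). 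First I would subtract the two indicator functions. The discrepancy is supported on $\{n_1\ge0,\ n_2=0,\ n_3\ge0\}$, on $\{n_1\ge0,\ n_2\ge1,\ n_3=0\}$, and on the region where $n_2,n_3\le-1$ and the range of $n_1$ swaps between $n_1\ge0$ and $n_1\le-1$. On each of these pieces the sum over the remaining free variable(s) is geometric; using $0<y_2,y_3<v$ to control convergence (and to make the relevant $\lfloor\cdot\rfloor$-shifts vanish), one collects these boundary contributions into a single Appell-type series, obtaining
\[
F^*(\b z)=F(\b z)+q^{-\frac18}\zeta_1^{-\frac12}\zeta_2^{\frac12}\zeta_3^{\frac12}\sum_{n\in\Z}\frac{\zeta_2^n}{1-\zeta_3 q^n}.
\]
By Lemma \ref{2.3} with $z_1\mapsto z_2$, $z_2\mapsto z_3$, the series equals $-i\eta^3\vartheta(z_2+z_3)/(\vartheta(z_2)\vartheta(z_3))$.

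Next I would substitute the formula of Lemma \ref{2.4} for $F$ and combine the two terms that carry the factor $\eta^3\vartheta(z_2+z_3)/(\vartheta(z_2)\vartheta(z_3))$, which gives
\[
F^*(\b z)=i\vartheta(z_1)\mu(z_1,z_2)\mu(z_1,z_3)-\frac{\eta^3\vartheta(z_2+z_3)}{\vartheta(z_2)\vartheta(z_3)}\left(\mu(z_1,z_2+z_3)+iq^{-\frac18}\zeta_1^{-\frac12}(\zeta_2\zeta_3)^{\frac12}\right).
\]
Finally I would invoke the first identity of Lemma \ref{lemShift} (3) with $z_2$ replaced by $z_2+z_3$ (so $\zeta_2$ is replaced by $\zeta_2\zeta_3$), namely $\mu(z_1,z_2+z_3)+q^{-\frac12}\zeta_1^{-1}\zeta_2\zeta_3\,\mu(z_1+\tau,z_2+z_3)=-iq^{-\frac18}\zeta_1^{-\frac12}(\zeta_2\zeta_3)^{\frac12}$, which turns the bracketed quantity into $-q^{-\frac12}\zeta_1^{-1}\zeta_2\zeta_3\,\mu(z_1+\tau,z_2+z_3)$. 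Plugging this back in yields precisely the asserted identity.

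The main obstacle is the first step: carrying out the region comparison and the rearrangement of sums rigorously, since the partial sums over the boundary configurations are only conditionally convergent — which is exactly why the hypothesis $0<y_2,y_3<v$ enters — and then checking that they genuinely recombine into $\sum_{n\in\Z}\zeta_2^n/(1-\zeta_3 q^n)$ rather than into a more complicated partial-theta expression. An alternative route that avoids passing through $F$ is to reprove Lemma \ref{2.4} (Theorem 1.3 of \cite{BRZ}) directly for $F^*$: peel off the outer sum over $n_1$, evaluate the inner corner sums over $n_2,n_3\in\N_0$ by Lemma \ref{2.3}, and identify the remaining $n_1$-series as $\mu$-functions; in this approach the shifted argument $z_1+\tau$ and the coefficient $q^{-\frac12}\zeta_1^{-1}\zeta_2\zeta_3$ appear automatically from the summation ranges starting at $0$ instead of at $1$.
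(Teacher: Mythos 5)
Your proposal is correct and follows essentially the same route as the paper's proof: write $F^{*}=F+q^{-\frac18}\zeta_1^{-\frac12}\zeta_2^{\frac12}\zeta_3^{\frac12}\bigl(\sum_{n_2,n_3\geq 0}-\sum_{n_2,n_3<0}\bigr)q^{n_2n_3}\zeta_2^{n_2}\zeta_3^{n_3}$ (your single sum $\sum_{n}\zeta_2^{n}/(1-\zeta_3q^{n})$ is the same quantity after summing the geometric series), evaluate it by Lemma \ref{2.3}, and then combine Lemma \ref{2.4} with the first identity of Lemma \ref{lemShift} (3) applied at $z_2+z_3$. The region-comparison step you flag as the main obstacle is exactly the step the paper also passes over with ``we may write,'' so your account is, if anything, more candid about where the care is needed.
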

\begin{proof}
	We may write
	\begin{align*}
	F^{*}(\b z)=F(\b z)+q^{-\frac18}\zeta_1^{-\frac12}\zeta_2^\frac12\zeta_3^\frac12 \left(\sum_{n_2,n_3\geq 0}-\sum_{n_2,n_3<0}\right)q^{n_2 n_3} \zeta_2^{n_2}\zeta_3^{n_3},
	\end{align*}
	where $F$ is defined in \eqref{defineF}. Now, using Lemma \ref{2.3}, we obtain
	\begin{equation}\label{Tquot}
	\left(\sum_{n_2,n_3\geq 0}-\sum_{n_2,n_3<0}\right)q^{n_2 n_3} \zeta_2^{n_2}\zeta_3^{n_3}=-i\eta^3\frac{\vartheta(z_2+z_3)}{\vartheta(z_2)\vartheta(z_3)}.
	\end{equation}
	The claim then follows using Lemma \ref{lemShift} (3) and Lemma \ref{2.4}.
\end{proof}
We define the completion of $f_4$ as 
\begin{align*}
\widehat{f}_4\left(\b z;\tau\right):=&i  q^{-\frac38} \zeta_3^{-\frac32} \sum_{ \pm} \widehat{F}^*\left(3\left(\pm z_1\pm z_2-z_3\right)-\tfrac{3\tau}{2}+\tfrac12,
\pm 3z_1,\pm 3z_2;3\tau\right)
\end{align*}
with
\begin{align*}
\widehat{F}^{*}(\b z;\tau) := i\vartheta(z_1;\tau) \widehat{\mu}(z_1,z_2;\tau) \widehat{\mu}(z_1,z_3;\tau)+q^{-\frac12}\zeta_1^{-1}\zeta_2\zeta_3\frac{\eta^3\vartheta(z_2+z_3;\tau)}{\vartheta(z_2;\tau)\vartheta(z_3;\tau)}\widehat{\mu}(z_1+\tau,z_2+z_3;\tau).
\end{align*}
Combining the previous results of this section gives the following.
\begin{theorem}\label{theoremPart2}
	The function $f_4$ is a sum of products of mock Jacobi forms of weight $\frac 32$ and index $\frac 12\left(\begin{smallmatrix}
	0& 3&0\\3&0&0\\0&0&-3
	\end{smallmatrix}\right)$ for $\Gamma_0(3)\cap \Gamma(2)$. To be more precise, $\widehat{f}_4$ satisfies for $\left(\begin{smallmatrix}
	a & b \\ c &d 
	\end{smallmatrix}\right) \in \Gamma_0(3)\cap \Gamma(2)$
	\begin{align*}
	\widehat{f}_4\left(\tfrac{z_1}{c\tau+d},\tfrac{z_2}{c\tau+d},\tfrac{z_3}{c\tau+d};\tfrac{a\tau+b}{c\tau+d}\right)=\left(\tfrac{3c}{d}\right)e^{\frac{\pi i (1-d)}{4}}(c\tau+d)^\frac32 e^{\frac{\pi i c}{c\tau+d}\left(6z_1z_2-3z_3^2\right)} \widehat{f}_4(z_1,z_2, z_3;\tau)
	\end{align*}
 and for $\b{\ell},\b{m}\in\Z^3$
	\begin{align*}
	\widehat{f}_4(\b{z}+\b{\ell}\tau+\b{m})= q^{-3\ell_1\ell_2+\frac{3}{2}\ell_3^2} \zeta_1^{-3\ell_2} \zeta_2^{-3\ell_1} \zeta_3^{3\ell_3}\widehat{f}_4(\b{z}).
	\end{align*}
\end{theorem}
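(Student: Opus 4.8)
The strategy is to reduce everything to the already-established modular and elliptic properties of $\widehat{A}$ (Lemma \ref{2.4altern}), transported through the chain of rewritings $f_4 \rightsquigarrow g_4 \rightsquigarrow F^* \rightsquigarrow \widehat{F}^*$. First I would record that, by Proposition \ref{lemma} together with Lemma \ref{lem51}, the holomorphic function $g_4$ equals the holomorphic $F^*$ (hence the $\mu$-expression) up to an explicit elementary prefactor and a lattice shift $-3\lfloor\tfrac{y_2}{v}\rfloor\tau$-type argument shift in each variable. The point of introducing $\widehat{F}^*$ is that replacing each $\mu$ by $\widehat{\mu}$ in the formula of Lemma \ref{lem51} produces a function whose nonholomorphic completion terms are exactly those prescribed by Zweger's $\widehat{A}$; since $\widehat{\mu}$ transforms as a Jacobi form of weight $\tfrac12$ and index $\tfrac12\left(\begin{smallmatrix}-1&1\\1&-1\end{smallmatrix}\right)$ (with multiplier) by the Remark after Lemma \ref{2.4altern}, one reads off directly that $\widehat{F}^*$ is a (vector-valued) Jacobi form of weight $\tfrac32$ and some index matrix $M^*$, which one computes by adding the index contributions of the three factors $\vartheta(z_1)$, $\widehat{\mu}(z_1,z_2)$, $\widehat{\mu}(z_1,z_3)$ (and separately checking the second term, whose index one verifies matches because it is the completion of the same function, cf. the proof of Lemma \ref{lem51}). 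This gives the modular and elliptic laws for $\widehat{F}^*$.

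Next I would assemble $\widehat{f}_4$. The definition writes $\widehat{f}_4$ as $iq^{-3/8}\zeta_3^{-3/2}$ times a sum over two sign choices of $\widehat{F}^*$ evaluated at $\big(3(\pm z_1\pm z_2 - z_3) - \tfrac{3\tau}{2}+\tfrac12,\ \pm 3z_1,\ \pm 3z_2;\ 3\tau\big)$. The key bookkeeping step is to track how the substitution $z_j \mapsto 3z_j$ and $\tau\mapsto 3\tau$ scales the weight and index: the elliptic/modular laws for $\widehat{F}^*(\b w;\sigma)$ in the variables $\b w$ at level $\Gamma_0(1)$ become, under $\b w = L\b z$ (a fixed linear map with the $\tfrac12,\tfrac{3\tau}{2}$ shift absorbed as in Theorem \ref{theoremPart1}) and $\sigma = 3\tau$, laws in $\b z$ for a congruence subgroup containing $3 \mid c$; the index matrix conjugates as $L^T M^* L$ scaled by $3$, which should land precisely on $3\cdot\tfrac12\left(\begin{smallmatrix}0&1&0\\1&0&0\\0&0&-1\end{smallmatrix}\right)$ after the two sign choices are symmetrized (the $\pm z_1, \pm z_2$ symmetrization kills the off-diagonal $z_1 z_3, z_2 z_3$ cross terms, leaving only $6 z_1 z_2$ and $-3 z_3^2$). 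The $\Gamma_0(3)\cap\Gamma(2)$ level and the multiplier $\left(\tfrac{3c}{d}\right)e^{\pi i(1-d)/4}$ arise exactly as in the $f_1$ and $f_3$ computations: it is the $\nu_\eta^3$-type multiplier of $\vartheta(\cdot;3\tau)$ cubed-and-combined with the half-integral shift, restricted to that group where it becomes the stated character; I would cite Lemma \ref{thetalemma} (4) and \eqref{etamod} and the analogous computation in Section \ref{pentagons}'s predecessors. The elliptic transformation follows more cheaply: $\widehat{f}_4$ and $f_4$ have the same elliptic law because the completion terms (built from $R$, which is elliptic up to the same theta multipliers) do not disturb it, and $f_4$'s law is Lemma \ref{lemf4} (1); alternatively one derives it from the elliptic law of $\widehat{F}^*$ by the same substitution.

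Finally I would confirm that $\widehat{f}_4$ is genuinely a \emph{completion} of $f_4$, i.e.\ that $\widehat{f}_4 - f_4$ is real-analytic and that $\widehat{f}_4$ agrees with the holomorphic $f_4$ minus the $R$-correction: this is where one invokes $f_4 = g_4(\b z) + g_4(-z_1,-z_2,z_3)$ and Proposition \ref{lemma}, matching term by term against the two summands $\pm$ in the definition of $\widehat{f}_4$, using that $\widehat{F}^*$ reduces to $F^*$ upon deleting the $R$-terms and that $\widehat{\mu}\to\mu$ is exactly that deletion. One subtlety: Proposition \ref{lemma} and Lemma \ref{lem51} hold only for $0<y_2,y_3<v$ (resp.\ the $\b Y$-floor conditions), whereas the Jacobi transformation must hold for generic $\b z$; the resolution is the standard one used already for $f_2$ and $f_3$ — establish the identity on the fundamental domain strip, then \emph{define} $\widehat{f}_4$ everywhere by the closed formula and check the two transformation laws hold identically as an identity of meromorphic-times-real-analytic functions, which is legitimate because both sides are built from $\widehat{A}$ and elementary exponentials. \textbf{The main obstacle} I anticipate is the index-matrix and multiplier bookkeeping: one must carefully propagate the index $\tfrac12\left(\begin{smallmatrix}-1&1\\1&-1\end{smallmatrix}\right)$ of $\widehat{\mu}$ through the products, the linear change of variables $\b w = L\b z$, the $\tau\mapsto 3\tau$ rescaling, and the $\pm$-symmetrization, and verify that the stray cross terms cancel and the half-integral-weight multiplier collapses to $\left(\tfrac{3c}{d}\right)e^{\pi i(1-d)/4}$ on $\Gamma_0(3)\cap\Gamma(2)$ — each piece is routine but the composition is error-prone. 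Everything else is a direct appeal to the quoted lemmas.
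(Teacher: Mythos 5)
Your plan follows the paper's own proof: the paper likewise completes $g_4$ via Proposition \ref{lemma}, shifts away the $Y_j$ (which cancels the outside prefactors), reads off weight, index, subgroup and multiplier from those of $\vartheta$ and $\widehat{A}$ (equivalently $\widehat{\mu}$), and uses the invariance of the transformation laws under $\b{z}\mapsto-\b{z}$ to pass from $g_4$ to $f_4$. One small correction to your bookkeeping: the $z_1z_3$ and $z_2z_3$ cross terms in the index already cancel within each single sign choice (substituting $w_1=3(z_1+z_2-z_3)$, $w_2=3z_1$, $w_3=3z_2$ into the quadratic form $-\tfrac12 w_1^2+w_1w_2+w_1w_3-\tfrac12 w_2^2-\tfrac12 w_3^2$ of $\widehat{F}^*$ and dividing by $3$ gives exactly $3z_1z_2-\tfrac32 z_3^2$ per term), which is essential because a $\pm$-symmetrization cannot remove cross terms from an automorphy factor --- the sum over $\pm$ is needed only because $f_4(\b{z})=g_4(\b{z})+g_4(-z_1,-z_2,z_3)$ and the common factor happens to be invariant under that sign change.
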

\begin{proof}
	Considering the identity in Proposition \ref{lemma} for the completed functions, we can shift away all the $Y_j$ in the arguments, which also cancels all occurring $Y_j$ in the factors outside. Then the weight, index, subgroup, and multiplier of the completion of $g_4$ can be deduced from those of $\vartheta$ and $\widehat{A}$. Since the transformation laws are invariant under $\b{z}\mapsto -\b{z}$, this implies that $f_4$ is a mock Jacobi form of the same weight, index, subgroup, and multiplier.
%	\begin{align*}
%	\widehat{f}_4\left(\b z;\tau\right) &:= i  q^{-\frac38} \zeta_3^{-\frac32}\sum_{ \pm } i\vartheta\left(3\left(\pm z_1\pm z_2-z_3\right)-\tfrac{3\tau}{2}+\tfrac12;3\tau\right)\\
%	& \qquad\qquad \times  \widehat{\mu}\left(3\left(\pm z_1\pm z_2-z_3\right)-\tfrac{3\tau}{2}+\tfrac12,\pm 3 z_1;3\tau\right) \widehat{\mu}\left(3\left(\pm z_1\pm z_2-z_3\right)-\tfrac{3\tau}{2}+\tfrac12,\pm 3 z_2;3\tau\right)
%	\\&\pm q^{-\frac32}\zeta_1^{-3z_3}\frac{\eta^3\vartheta\left( 3 z_1+ 3 z_2;3\tau\right)}{\vartheta\left( 3 z_1;3\tau\right)\vartheta\left( 3 z_2;3\tau\right)}\widehat{\mu}\left(3\left(\pm z_1\pm z_2-z_3\right)-\tfrac{3\tau}{2}+\tfrac12+\tau,\pm 3 z_1\pm 3 z_2;3\tau\right)
%	\end{align*}
\end{proof}

The jumps of $g_4$ are at $y_3 \in \Z v$, $y_1-y_3 \in \Z v$, and $y_2-y_3 \in \Z v$. We describe them explicitly in the following proposition.

\begin{proposition} \label{g4limit}
	%{\bf JK: This works numerically (with Pari/GP).}
	\begin{enumerate}[leftmargin=* , label={\rm (\arabic*)}]
		\item If $y_3\in \Z v$, $y_1-y_3,y_2-y_3 \notin \mathbb{Z}v$, then we have
		\begin{align*}
		\lim_{\varepsilon\to 0^+} \sum_{\pm} \pm g_4(z_1,z_2,z_3\pm i\varepsilon;\tau) &= -iq^{-\frac32\left(\frac{y_3}{v}\right)^2  }\zeta_3^{\frac{3y_3}{v}} \frac{\eta^3(3\tau)\vartheta(3(z_1+z_2);3\tau)}{\vartheta(3z_1;3\tau)\vartheta(3z_2;3\tau)}.
		\end{align*}
		\item If $y_1-y_3  \in \mathbb{Z}v$, $y_3,y_2-y_3 \notin \mathbb{Z}v$, then we have
		\begin{align*}
		\lim_{\varepsilon\to 0^+} \sum_{\pm} \pm g_4(z_1\pm i\varepsilon,z_2,z_3) &= \left(\zeta_1 \zeta_3^{-1}\right)^{3 \frac{y_1-y_3}{v}} q^{-\frac32 \left(\frac{y_1-y_3}{v}\right)^2} f_3(z_1+z_2-z_3,z_2-z_3).
		\end{align*}
		\item If $y_2-y_3 \in \mathbb{Z}v$, $y_3,y_1-y_3 \notin \mathbb{Z}v$, then we have
		\begin{align*}
		\lim_{\varepsilon\to 0^+} \sum_{\pm} \pm g_4(z_1,z_2\pm i\varepsilon,z_3) &= \left(\zeta_2 \zeta_3^{-1}\right)^{3 \frac{y_2-y_3}{v}} q^{-\frac32 \left(\frac{y_2-y_3}{v}\right)^2} f_3(z_1+z_2-z_3,z_1-z_3).
		\end{align*}
	\end{enumerate}
\end{proposition}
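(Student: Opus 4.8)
\textbf{Proof proposal for Proposition \ref{g4limit}.}

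The plan is to exploit the series representation \eqref{g4rep} of $g_4$, where the summation region is the union of the "positive cone" $\{3\b n + \b Y \geq 0\}$ and the "negative cone" $\{3\b n + \b Y < 0\}$, and to track how this region changes as one of $y_3$, $y_1-y_3$, or $y_2-y_3$ crosses a lattice point. The key structural observation is that each of these three quantities controls exactly one of the three inequality constraints defining the cones in \eqref{g4rep} (after the shift $\b n \mapsto \b n + n_3 \b 1$ used to obtain \eqref{g4rep}, the constraints become $3n_j + Y_j \gtrless 0$, $j\in\{1,2,3\}$). When we form the alternating limit $\lim_{\varepsilon\to 0^+}\sum_{\pm}\pm g_4(\dots \pm i\varepsilon \dots)$, all terms in which the relevant coordinate does not sit exactly on the boundary hyperplane cancel by dominated convergence (the remaining strict inequalities are insensitive to the sign of $\varepsilon$), so only the "boundary slice" of the sum survives. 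This is precisely the mechanism already used in the proof of Proposition \ref{f2jump} and Lemma \ref{L:lim}.

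For part (1), with $y_3 = \ell v$, I would first use Lemma \ref{lemf4} (2) to reduce to the case $y_3$ real (writing $z_3 = x_3 - \ell u + \ell\tau$), which produces the prefactor $q^{-\frac{3\ell^2}{2}}\zeta_3^{3\ell} = q^{-\frac32(y_3/v)^2}\zeta_3^{3y_3/v}$; the $e^{\pm 6\pi i\ell\varepsilon}$ factors tend to $1$. Then in \eqref{g4rep} the only constraint affected by the sign of $\varepsilon$ in $z_3$ is $3n_3 + Y_3 \gtrless 0$, i.e.\ $n_3 \gtrless 0$ once $y_3\in\Z v$; the alternating sum collapses to the slice $n_3 = 0$. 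Setting $n_3 = 0$ kills the $q^{3n_1n_3+3n_2n_3+3n_3^2/2}$ and $\zeta_3^{-3n_3}$ factors and leaves
\begin{align*}
\left(\sum_{\substack{n_1\geq 0,\,n_2\geq 0}} + \sum_{\substack{n_1<0,\,n_2<0}}\right) q^{3n_1n_2}\zeta_1^{3n_2}\zeta_2^{3n_1},
\end{align*}
which by Lemma \ref{2.3} (after a rescaling $q\mapsto q^3$, $z_j\mapsto 3z_j$, exactly as in \eqref{Tquot}) equals $-i\eta^3(3\tau)\vartheta(3(z_1+z_2);3\tau)/(\vartheta(3z_1;3\tau)\vartheta(3z_2;3\tau))$. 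Multiplying by the prefactor gives the stated formula.

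For parts (2) and (3), which are symmetric under swapping $z_1\leftrightarrow z_2$ (note the defining sums $S_1$, $S_3$ are symmetric in the first two coordinates, and so is \eqref{g4rep}), I would again first shift the offending coordinate to be real via Lemma \ref{lemf4} (2), extracting the prefactor $(\zeta_1\zeta_3^{-1})^{3(y_1-y_3)/v}q^{-\frac32((y_1-y_3)/v)^2}$ in case (2). Now the boundary slice is $3n_1 + Y_1 = 0$, i.e.\ $n_1 = 0$ (before the change of variables that produced \eqref{g4rep}, this is the hyperplane $n_1 - n_3 + \lfloor\cdots\rfloor = 0$; it is cleanest to go back to the original summation in $S_1\cup S_2\cup S_3$ and identify the surviving slice as $\{n_1 = n_3\}$, i.e.\ the "slant face" of the cone). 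On that slice one of the two defining inequalities $|x_1|\geq|x_3|$ becomes an equality, so the surviving sum is over a two-dimensional set of $(n_2,n_3)$ with the constraints $\sgn(n_3)=\pm$, $|n_2 + \text{shift}|\geq|n_3 + \text{shift}|$, $\sgn((n_2)(n_3))$ matching — and after setting $n_1 = n_3$ in the quadratic form and the monomials, $q^{3n_1n_2 - \frac32 n_3^2}\zeta_1^{3n_2}\zeta_2^{3n_1}\zeta_3^{-3n_3}$ becomes $q^{3n_3n_2 - \frac32 n_3^2}\zeta_1^{3n_2}(\zeta_2\zeta_3^{-1})^{3n_3}$. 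I expect this to match, after renaming $(n_2,n_3)\mapsto(n_1,-n_2)$ or similar and comparing with $\chi_3$, exactly the series \eqref{f3} for $f_3$ evaluated at $(z_1+z_2-z_3, z_2-z_3)$ — the combination $z_1+z_2-z_3$ arising because $\zeta_1^{3n_2}(\zeta_2\zeta_3^{-1})^{3n_3}$ together with the quadratic cross-term reorganizes into the $f_3$-monomials, and $z_2 - z_3$ being the second argument. The main obstacle, and the only genuinely delicate point, is this last bookkeeping: one must carefully match the Heaviside/sign constraints of the slice against the definition of $\chi_3$ (in particular the boundary conventions $H$ vs.\ $H^*$, and whether the degenerate slice inequality $|x_1| = |x_3|$ is attained with the right inclusive/exclusive convention so as to reproduce $H^*(|x_1| - |x_2|)$ in $\chi_3$). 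The case $y_3,y_2-y_3\notin\Z v$ ensures the other two constraints stay strict, so no further degeneracies occur and dominated convergence applies cleanly to discard everything off the slice. Part (3) is identical after the $z_1\leftrightarrow z_2$ symmetry.
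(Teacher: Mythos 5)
Your proposal takes essentially the same route as the paper's proof: reduce to the case where the relevant imaginary part lies in $[0,v)$ via the elliptic transformation of Lemma \ref{lemf4} (2) (which produces exactly the stated prefactors), note that the alternating limit cancels everything off the boundary hyperplane, and identify the surviving slice via Lemma \ref{2.3} (in the rescaled form \eqref{Tquot}) for part (1), respectively as the defining series of $f_3$ for parts (2) and (3). Two corrections/completions are needed. In part (1) the $n_3=0$ slice is \emph{gained} by the positive cone and \emph{lost} by the negative cone as $\varepsilon$ changes sign, so the surviving sum is $\bigl(\sum_{n_1,n_2\ge 0}-\sum_{n_1,n_2<0}\bigr)q^{3n_1n_2}\zeta_1^{3n_2}\zeta_2^{3n_1}$ with a \emph{minus} sign between the two cones; your displayed version with a plus sign is not equal to the stated theta quotient, since \eqref{Tquot} applies only to the minus-version. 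In part (2), the constraint bookkeeping you defer is precisely where the paper's proof does its work: writing out the four one-sided sums and cancelling pairwise leaves $\bigl(\sum_{n_3+\frac{y_3}{v}\ge 0,\,n_2+\frac{y_2-y_3}{v}\ge 0}-\sum_{n_3+\frac{y_3}{v}<0,\,n_2+\frac{y_2-y_3}{v}<0}\bigr)q^{3n_2n_3+\frac{3n_3^2}{2}}\zeta_1^{3n_2}\left(\zeta_1\zeta_2\zeta_3^{-1}\right)^{3n_3}$, which under the substitution $m_1=n_2+n_3$, $m_2=n_2$ is exactly $f_3(z_1+z_2-z_3,z_2-z_3)$ written in the $\tfrac12(\sgn+\sgn)$ form appearing in the proof of Proposition \ref{f3Appell}; this identification (your ``slant face'' in the unshifted coordinates is the same slice) does go through, but it has to be carried out explicitly for the argument to be complete.
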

\begin{remark}
	We note that the right-hand side of Proposition \ref{g4limit} (1) is meromorphic in $(z_1,z_2)\in\C^2$ whereas the right-hand sides of Proposition \ref{g4limit} (2) and (3) have jumps in $(z_1,z_2)$, which can be seen by using Lemma \ref{L:lim}.
\end{remark}

\begin{proof}[Proof of Proposition \ref{g4limit}]
	We only prove (1) and (2) since part (3) follows analogously.\\
	(1) We first assume that $z_3=x_3\in\R$ and compute for $0<y_1,y_2<v$, using  \eqref{g4rep} and \eqref{Tquot}
	\begin{align}
	&\lim\limits_{\varepsilon \to 0^+} \sum_{\pm} \pm g_4(z_1,z_2,x_3 \pm i \varepsilon;\tau)\notag \\
	&= \left(\sum_{\b n\in\N_0^3} + \sum_{\b n\in-\N^3}\right) q^{3n_1n_2+3n_1n_3+3n_2n_3+ \frac{3 n_3^2}{2}} \zeta_2^{3n_1} \zeta_1^{3n_2} \left(\zeta_1 \zeta_2 e^{-2 \pi i x_3}\right)^{3n_3} \notag\\
	& \hspace{3.0cm} - \left(\sum_{ \b{n} \in \N_0^2\times \N} + \sum_{\b{n} \in -\left(\N^2\times \N_0\right)}\right) q^{3n_1n_2+3n_1n_3+3n_2n_3+ \frac{3 n_3^2}{2}} \zeta_2^{3n_1} \zeta_1^{3n_2} \left(\zeta_1 \zeta_2 e^{-2 \pi i x_3}\right)^{3n_3}\notag\\
	&= \left(\sum_{ \b{n}\in\N_0^2} - \sum_{ \b{n}\in-\N^2 }\right) q^{3n_1n_2} \zeta_2^{3n_1} \zeta_1^{3n_2}= -i \eta^3 (3\tau) \frac{\vartheta(3(z_1+z_2);3 \tau)}{\vartheta(3z_1;3\tau) \vartheta(3 z_2; 3\tau)}\label{g4limeq}.
	\end{align}
	This gives the claim in special case that $y_3=0$.
	
	In general, we have $y_3=\ell_3v$ for some $\ell_3 \in \mathbb{Z}$, thus $z_3 = x_3-\ell_3 u + \ell_3\tau$. Writing for $j \in \{1,2\}$ $z_j=\z_j+\ell_j \tau$ with $0<\im(\z)<v$, we then obtain, using Lemma \ref{lemf4} (2)
	\begin{align*}
	&g\left(z_1, z_2, z_3\pm i\varepsilon\right)=q^{-3\ell_1\ell_2-\frac{3\ell_3^3}{2}}e^{6\pi i(\ell_3z_3-\ell_2\z_1-\ell_1\z_2\pm i \ell_3\varepsilon)}g\left(\z_1, \z_2, x_3-\ell u_3\pm i\varepsilon\right).
	\end{align*}

Combining with  \eqref{g4limeq} gives the claim.

\noindent (2) We begin by computing, for $z_1=x_1+iy_3$, $y_3 \not = 0$, using \eqref{g4rep}
\begin{align*}
&\lim\limits_{\varepsilon \to 0^+} \sum_{\pm} \pm g_4(x_1+i(y_3 \pm  \varepsilon),z_2,z_3) \\&=    \lim\limits_{\varepsilon \to 0^+} \sum_{\pm}\pm   \left(\sum_{\substack{\b n\in\Z^3 \\ \b n + \frac1v (\pm \varepsilon,y_2-y_3,y_3)\geq 0}}  +  \sum_{\substack{\b n\in\Z^3\\  \b n + \frac1v (\pm \varepsilon,y_2-y_3,y_3)< 0 }}\right) \\
&\hspace{7cm} \times q^{3n_1n_2+3n_1n_3+3n_2n_3+\frac{3n_3^2}{2}} \zeta_1^{3\left(n_2+n_3\right)}\zeta_2^{3\left(n_1+n_3\right)}\zeta_3^{-3n_3}
\\&= \left(  \sum_{\substack{n_1\geq 0\\ n_2 + \frac{y_2-y_3}{v} \geq 0\\ n_3+\frac{y_3}{v}\geq 0}}   -  \sum_{\substack{n_1 > 0\\ n_2 + \frac{y_2-y_3}{v} \geq 0\\ n_3+\frac{y_3}{v}\geq 0}}
+  \sum_{\substack{n_1< 0\\ n_2+\frac{y_2-y_3}{v}<0 \\ n_3+\frac{y_3}{v}<0}}   -  \sum_{\substack{n_1\leq 0\\ n_2+\frac{y_2-y_3}{v}<0 \\ n_3+\frac{y_3}{v}<0}}  \right) \\ &\hspace{7cm } \times    q^{3n_1n_2+3n_1n_3+3n_2n_3+\frac{3n_3^2}{2}} \zeta_1^{3\left(n_2+n_3\right)}\zeta_2^{3\left(n_1+n_3\right)}\zeta_3^{-3n_3}
\\&=\left(\sum_{\substack{n_3 + \frac{y_3}{v} \geq 0 \\ n_2+ \frac{y_2-y_3}{v} \geq 0}} - \sum_{\substack{n_3+ \frac{y_3}{v} <0 \\ n_2 + \frac{y_2-y_3}{v} <0} }\right) q^{3n_2n_3 + \frac{3n_3^2}{2}} \zeta_1^{3n_2} \left(\zeta_1\zeta_2 \zeta_3^{-1}\right)^{3n_3}= f_3(z_1+z_2-z_3,z_2-z_3).
\end{align*}
 Now we consider $z_1\in\C$ with $y_1-y_3=\ell v$ for some $\ell\in\Z$. Then $z_1= x_1 - \ell u + iy_3 + \ell \tau$. Lemma \ref{lemf4} (2) gives that
\begin{align*}\label{shift}
g_4(z_1\pm i \varepsilon,z_2,z_3) = \zeta_2^{-3\ell} g_4 (x_1- \ell u + i (y_3 \pm \varepsilon), z_2, z_3).
\end{align*}
Combining this, we may conclude the claim, using Lemma \ref{lem2}.
\end{proof}

From Proposition \ref{g4limit} we immediately obtain the following corollary.
\begin{corollary}\label{cor1}
	\begin{enumerate}[leftmargin=* , label={\rm (\arabic*)}]
		\item
		If $y_3\in  \mathbb{Z}v, y_1-y_3,y_2-y_3 \notin  \mathbb{Z}v$, then we have
		\begin{align*}
		\lim\limits_{\varepsilon \to 0^+} \sum_{\pm} \pm f_4(z_1,z_2,z_3 \pm i \varepsilon) = 0.
		\end{align*}
		\item If $y_1-y_3\in  \mathbb{Z}v, y_3,y_2-y_3,y_1+y_3$, and $y_2+y_3 \notin  \mathbb{Z}v$, then we have
		\begin{align*}
		&\lim_{\varepsilon\to 0^+} \sum_{\pm} \pm f_4(z_1\pm i\varepsilon,z_2,z_3)=\left(\zeta_1\zeta_3^{-1}\right)^{3\frac{y_1-y_3}{v}} q^{-\frac32\left(\frac{y_1-y_3}{v}\right)^2} f_3(z_1+z_2-z_3,z_2-z_3) .
		\end{align*}
		\item If $y_1+y_3\in  \mathbb{Z}v, y_3,y_2-y_3,y_1-y_3,y_2+y_3 \notin  \mathbb{Z}v$, then we have
		\begin{align*}
		&\lim_{\varepsilon\to 0^+} \sum_{\pm} \pm f_4(z_1\pm i\varepsilon,z_2,z_3)= \left(\zeta_1\zeta_3^{-1}\right)^{3\frac{y_1+y_3}{v}} q^{-\frac32\left(\frac{y_1+y_3}{v}\right)^2} f_3(z_1+z_2+z_3,z_2+z_3).
		\end{align*}
		\item If $y_1-y_3,y_1+y_3\in  \mathbb{Z}v, y_3,y_2-y_3,y_2+y_3 \notin  \mathbb{Z}v$, then we have
	\begin{align*}
		&\lim_{\varepsilon\to 0^+} \sum_{\pm} \pm f_4(z_1\pm i\varepsilon,z_2,z_3)=\left(\zeta_1\zeta_3^{-1}\right)^{\frac{3y_1}{v}} q^{-\frac{3}{2v^2}\left(y_1^2+y_3^2\right)} \\
		& \times\left(\left(\zeta_1^{-1}\zeta_3\right)^{\frac{y_3}{2}} q^{\frac{3y_1y_3}{v}} f_3(z_1+z_2-z_3,z_2-z_3) \right.\left.+ \left(\zeta_1\zeta_3^{-1}\right)^{\frac{y_3}{v}} q^{-\frac{3y_1y_3}{v}} f_3(z_1+z_2+z_3,z_2+z_3)\right).
		\end{align*}
	\end{enumerate}
\end{corollary}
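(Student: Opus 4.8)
The plan is to reduce all four parts to the decomposition $f_4(\b z)=g_4(\b z)+g_4(-z_1,-z_2,z_3)$, available whenever $y_3,y_1-y_3,y_2-y_3\notin\Z v$, together with Proposition \ref{g4limit}. Fix one of the hypotheses of the corollary and perturb the indicated variable by $\pm i\varepsilon$; for small $\varepsilon>0$ the listed non-membership conditions guarantee both that this splitting of $f_4$ still holds along the approach (at $\b z$ and at the reflected point $(-z_1,-z_2,z_3)$) and that at most one of the two summands is discontinuous at $\varepsilon=0$. Indeed, as recorded just before Proposition \ref{g4limit}, $g_4(\b w)$ jumps exactly when one of $\Im(w_1)-\Im(w_3)$, $\Im(w_2)-\Im(w_3)$, $\Im(w_3)$ lies in $\Z v$; applied to $\b w=(-z_1,-z_2,z_3)$ these become $y_1+y_3\in\Z v$, $y_2+y_3\in\Z v$, $y_3\in\Z v$, which is precisely why the auxiliary hypotheses on $y_1\pm y_3$ and $y_2\pm y_3$ appear: they keep the ``other'' summand continuous (and the splitting valid) near the point in question, so that $\lim_{\varepsilon\to0^+}\sum_\pm\pm(\cdot)$ annihilates it.

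For (1) both summands jump in the $z_3$-direction; applying Proposition \ref{g4limit}(1) once at $(z_1,z_2,z_3)$ and once at $(-z_1,-z_2,z_3)$ and using $\vartheta(-z)=-\vartheta(z)$ (Lemma \ref{thetalemma}(1)) shows the two $\eta^3(3\tau)\vartheta(\cdot)/(\vartheta(\cdot)\vartheta(\cdot))$-terms are opposite, so the limit is $0$. For (2) only $g_4(\b z)$ jumps (at the reflected point $-y_1-y_3\notin\Z v$), so the limit equals $\lim_{\varepsilon\to0^+}\sum_\pm\pm g_4(z_1\pm i\varepsilon,z_2,z_3)$, which is Proposition \ref{g4limit}(2) verbatim. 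For (3) the roles reverse: $g_4(\b z)$ is continuous in $z_1$ while $g_4(-z_1,-z_2,z_3)$ jumps since $\Im(-z_1)-\Im(z_3)=-(y_1+y_3)\in\Z v$; one applies Proposition \ref{g4limit}(2) to the triple $(-z_1,-z_2,z_3)$, picks up the overall sign caused by $\pm i\varepsilon\mapsto\mp i\varepsilon$ under the reflection, and rewrites $f_3(-(z_1+z_2+z_3),-(z_2+z_3))$ via the parity $f_3(-\b z)=-f_3(\b z)$ — immediate from $\chi_3(-\b x)=-\chi_3(\b x)$ (since $\sgn$ is odd and $H,H^*$ see only $|x_1|,|x_2|,x_1x_2$) after $\b n\mapsto-\b n$ in the defining sum — before collecting the $q$- and $\zeta$-prefactors coming from Proposition \ref{g4limit}(2). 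Finally (4) is the superposition of (2) and (3): both summands jump, via $y_1-y_3\in\Z v$ and $y_1+y_3\in\Z v$ respectively, each handled by Proposition \ref{g4limit}(2), and the answer is the sum of the two contributions.

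The only place where care is required — and the one I expect to be the main obstacle — is the sign and prefactor bookkeeping in (3) and hence in (4): one must correctly combine the sign from $\pm i\varepsilon\mapsto\mp i\varepsilon$ under the reflection, the sign from $f_3(-\b z)=-f_3(\b z)$, the effect of $z_j\mapsto-z_j$ on the $\zeta$-powers $(\zeta_1\zeta_3^{-1})^{\bullet}$, and the elliptic prefactors $q^{-\frac32(\cdot)^2}(\zeta_1\zeta_3^{-1})^{3(\cdot)}$ emitted by Proposition \ref{g4limit}(2). Everything else is a direct substitution from the already established results of this section.
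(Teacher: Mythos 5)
Your strategy is exactly the one the paper intends (the paper gives no proof beyond ``immediately from Proposition \ref{g4limit}''): split $f_4(\b z)=g_4(\b z)+g_4(-z_1,-z_2,z_3)$ along the approach, note that the listed non-membership conditions keep this splitting valid and make only the expected summand(s) jump, and apply Proposition \ref{g4limit} to each piece, using $\vartheta(-z)=-\vartheta(z)$ for the cancellation in (1) and the oddness $f_3(-\b z)=-f_3(\b z)$ (which you justify correctly via $\chi_3(-\b x)=-\chi_3(\b x)$) together with the sign from $\pm i\varepsilon\mapsto\mp i\varepsilon$ in (3) and (4). All of these ingredients check out, and (1) and (2) are complete as you describe them.

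The one step you explicitly defer --- the prefactor bookkeeping in (3), hence in (4) --- is where the actual content lies, and you should be aware that carrying it out faithfully does not reproduce the printed formula. Applying Proposition \ref{g4limit} (2) at $(-z_1,-z_2,z_3)$, the ``$\zeta_1$'' there is $e^{-2\pi iz_1}=\zeta_1^{-1}$ and the ``$y_1-y_3$'' is $-(y_1+y_3)$, so after the two signs cancel the prefactor is $\left(\zeta_1^{-1}\zeta_3^{-1}\right)^{-3\frac{y_1+y_3}{v}}=(\zeta_1\zeta_3)^{3\frac{y_1+y_3}{v}}$, not $(\zeta_1\zeta_3^{-1})^{3\frac{y_1+y_3}{v}}$. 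This is forced by consistency: since $f_4(\b z)=f_4(-z_1,-z_2,z_3)$ and $f_3(-\b w)=-f_3(\b w)$ hold exactly, part (3) must equal $-1$ times part (2) evaluated at $(-z_1,-z_2,z_3)$, which yields $(\zeta_1\zeta_3)^{3\frac{y_1+y_3}{v}}q^{-\frac32\left(\frac{y_1+y_3}{v}\right)^2}f_3(z_1+z_2+z_3,z_2+z_3)$. So either you locate a source for an extra $\zeta_3^{6(y_1+y_3)/v}$ (there is none), or the stated (3) --- and correspondingly the second summand of (4), whose exponents $\frac{y_3}{2}$ and $\frac{y_3}{v}$ are in any case not what factoring $(\zeta_1\zeta_3^{-1})^{3y_1/v}$ out of the two contributions produces --- contains typos. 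Your proof plan is sound; completing it will prove a corrected version of (3) and (4) rather than the formulas as printed, and you should say so explicitly rather than leaving the bookkeeping as a remark.
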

\begin{remark}
Since $f_4(\b{z})=f_4(z_2,z_1,z_3)$, one obtains descriptions of jumps analogous to (2), (3), and (4) by exchanging the first and second variable.
\end{remark}

The following lemma computes one-sided limits to the jumps of $g_4$, which are built from of $\mu$-functions and theta-functions. For this define
 	\begin{equation*}
 	T(z;\tau)
	:=
 	 \frac{\vartheta\left(\tfrac{3\tau}{2}+\tfrac12 ; 3\tau\right)\vartheta\left(3z+\tfrac12;3\tau\right)\vartheta\left(3z+\tfrac{3\tau}{2};3\tau\right)}{\vartheta\left(3z+\tfrac{3\tau}{2}+\tfrac12;3\tau\right)}.
 	\end{equation*}
 \begin{lemma}\label{lem:limitsToZero}
 \begin{enumerate}[leftmargin=* , label={\rm (\arabic*)}]
  \item We have
 	\begin{align*}
 	&\lim_{\varepsilon\to 0^+} g_4(z_1,z_2,i\varepsilon;\tau) \\
 	&= - q^{-\frac32\left(\lf\frac{y_1}{v}\rf^2+\lf\frac{y_2}{v}\rf^2\right)-\frac32\left(\lf\frac{y_1}{v}\rf+\lf\frac{y_2}{v}\rf\right)-\frac38}\zeta_1^{3\lf\frac{y_1}{v}\rf} \zeta_2^{3\lf\frac{y_2}{v}\rf} \vartheta\left(3\left(z_1+z_2\right)-\tfrac{3\tau}{2}+\tfrac12;3\tau\right) \\
 	&\hspace{1cm}\times\mu\left(3\left(z_1+z_2\right)-3\lf\tfrac{y_2}{v}\rf\tau-\tfrac{3\tau}{2}+\tfrac12,3z_1;3\tau\right)\mu\left(3\left(z_1+z_2\right)-3\lf\tfrac{y_1}{v}\rf\tau-\tfrac{3\tau}{2}+\tfrac12,3z_2;3\tau\right) \\
 	&\quad- \frac{i\eta^3(3\tau)}{2\vartheta(3z_1;3\tau)\vartheta(3z_2;3\tau)}\left(-\vartheta(3\left(z_1+z_2\right);3\tau) + q^{\frac38}T\left(z_1+z_2\right) \right).
 	\end{align*}

  \item We have
  \begin{align*}
  &\lim_{\varepsilon\to 0^+} g_4(z_3+ i\varepsilon,z_2,z_3;\tau)\\
  &= -\frac{1}{2} q^{-\frac32 \lf\frac{y_2-y_3}{v}\rf^2-\frac32\lf\frac{y_2-y_3}{v}\rf} \zeta_2^{3\lf\frac{y_2-y_3}{v}\rf}\zeta_3^{-3 \lf \frac{y_2-y_3}{v} \rf - \frac32} \vartheta\left(z_2-\tfrac{3\tau}{2}+\tfrac12;3\tau\right)\\
  &\hspace{4.9cm}\times  \mu\left(3z_2-3\lf\tfrac{y_2-y_3}{v}\rf\tau-\tfrac{3\tau}{2}+\tfrac12,3z_3;3\tau\right)\left(-1+q^{\frac38}\frac{T(z_2)}{\vartheta(3z_2;3\tau)}\right)\\
  &\quad-i q^{-\frac32\lf\frac{y_3}{v}\rf-\frac38} \zeta_3^{3\lf \frac{y_2}{v} \rf +\frac32}  \frac{\eta^3(3\tau)\vartheta(3(z_2+z_3);3\tau)}{\vartheta(3z_2;3\tau)\vartheta(3z_3;3\tau)}\mu\left(3z_2+\tfrac{3\tau}{2}+\tfrac12,3\left(z_2+z_3\right)-3\lf\tfrac{y_3}{v}\rf\tau; 3\tau\right).
  \end{align*}
  \item We have
  \begin{align*}
	&\lim_{\varepsilon\to 0^+} g_4(z_1,z_3+ i\varepsilon,z_3;\tau)\\
	&= -\frac{1}{2}q^{-\frac32 \lf\frac{y_1-y_3}{v}\rf^2-\frac32\lf\frac{y_1-y_3}{v}\rf}\zeta_1^{3\lf\frac{y_1-y_3}{v}\rf}\zeta_3^{-3 \lf \frac{y_1-y_3}{v} \rf - \frac32} \vartheta\left(z_1-\tfrac{3\tau}{2}+\tfrac12;3\tau\right)\\
	&\hspace{5cm}\times  \mu\left(3z_1-3\lf\tfrac{y_1-y_3}{v}\rf\tau-\tfrac{3\tau}{2}+\tfrac12,3z_3;3\tau\right)\left(-1+q^{\frac38}\tfrac{T(z_1)}{\vartheta(3z_1;3\tau)}\right)\\
	&\quad-i q^{-\frac32\lf\frac{y_3}{v}\rf-\frac38} \zeta_3^{3\lf \frac{y_3}{v} \rf+\frac32 }  \frac{\eta^3(3\tau)\vartheta(3(z_1+z_3);3\tau)}{\vartheta(3z_1;3\tau)\vartheta(3z_3;3\tau)}\vartheta\left(3z_1+\tfrac{3\tau}{2}+\tfrac12,3\left(z_1+z_3\right)-3\lf\tfrac{y_3}{v}\rf\tau;3\tau\right).
\end{align*}
 \end{enumerate}
 \end{lemma}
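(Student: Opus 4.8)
The plan is to combine the $F^{*}$-representation of $g_4$ from Proposition \ref{lemma} with the $\mu$-formula of Lemma \ref{lem51}, and then carry out the one-sided limit. First I would substitute the given perturbed point into Proposition \ref{lemma}: $(z_1,z_2,i\varepsilon)$ in part (1), $(z_3+i\varepsilon,z_2,z_3)$ in part (2), and $(z_1,z_3+i\varepsilon,z_3)$ in part (3). For small $\varepsilon>0$ and generic $\b z$ the floor quantities defining $Y_1,Y_2,Y_3$ are locally constant: one gets $Y_3=0$ in (1), $Y_1=0$ in (2), $Y_2=0$ in (3), while the remaining $Y_j$ equal $3\lfloor y_j/v\rfloor$ or $3\lfloor (y_j-y_3)/v\rfloor$ as in the statement. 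Hence $g_4$ at the perturbed point equals an explicit monomial in $q$ and the $\zeta_j$ times $F^{*}$, evaluated (with modular parameter $3\tau$) at three arguments that vary continuously in $\varepsilon$. The shifts $-Y_j\tau$ built into Proposition \ref{lemma} are precisely designed so that, after possibly one further translation by an integer multiple of $3\tau$ (compensated via $\mu(z_1+\tau,z_2+\tau)=\mu(z_1,z_2)$, Lemma \ref{lemShift} (1)), the second and third slots of $F^{*}$ land in the strip in which Lemma \ref{lem51} applies. I would then replace $F^{*}$ by $i\vartheta(z_1)\mu(z_1,z_2)\mu(z_1,z_3)+q^{-\frac32}\zeta_1^{-1}\zeta_2\zeta_3\,\tfrac{\eta^3\vartheta(z_2+z_3)}{\vartheta(z_2)\vartheta(z_3)}\,\mu(z_1+3\tau,z_2+z_3)$ (with $\tau\mapsto 3\tau$) at these arguments.

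Next I would let $\varepsilon\to 0^{+}$. In each case exactly one $\mu$-factor degenerates, its two arguments collapsing to the shape $\mu(w\pm\tfrac{3\tau}{2}+\tfrac12,\,w;3\tau)$: in (1) this happens to the $\mu$ in the second summand of $F^{*}$, in (2) and (3) to a factor in the first summand. Such a $\mu$ only apparently sits at a pole; using the symmetry $\mu(z_1,z_2)=\mu(z_2,z_1)$ and $\mu(z_1+1,z_2)=-\mu(z_1,z_2)$ from Lemma \ref{lemShift} (3) to bring it into the normalisation of Lemma \ref{lemShift} (4), then applying Lemma \ref{lemShift} (4) together with the quasi-periodicity and reflection properties of $\vartheta$ (Lemma \ref{thetalemma} (1), (2)), I would evaluate it in closed form; simplifying the resulting theta-quotient with Lemma \ref{thetalemma} (3) produces exactly the function $T$ -- this is where $T$ enters all three identities. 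The surviving, non-degenerate summand contributes the $\vartheta\cdot\mu\cdot\mu$ term in (1) and the $\eta^{3}$-quotient-times-$\mu$ term in (2), (3), once the $-Y_j\tau$ shifts are pushed back into its $\mu$- and $\vartheta$-arguments via Lemma \ref{lemShift} (1) and Lemma \ref{thetalemma} (2). It then remains to collect all the monomial prefactors -- coming from Proposition \ref{lemma}, from the $3\tau$-translations, from Lemma \ref{thetalemma} (2), and from the fact that $q^{-\frac32}\zeta_1^{-1}\zeta_2\zeta_3$ evaluates to $-1$ at the degenerate arguments in (1) -- and to check that they assemble into the displayed powers of $q,\zeta_1,\zeta_2,\zeta_3$ involving $\lfloor y_j/v\rfloor$ and $\lfloor (y_j-y_3)/v\rfloor$.

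The hard part is the degenerate-limit step: naively plugging the limiting arguments into Lemma \ref{lem51} hits the pole of $\mu$, so one must keep the offending $\mu$ paired with the $\vartheta$ it divides and substitute the closed form of Lemma \ref{lemShift} (4) \emph{before} passing to the limit, and then carefully distinguish which of the $\pm\tfrac12$, $\pm\tfrac{\tau}{2}$ half-shifts is present. It is exactly this sign-and-shift bookkeeping -- amplified by the rescaling $\tau\mapsto 3\tau$, which turns $q^{\frac18}$ into $q^{\frac38}$ and so on -- that produces the various signs and the $q^{\frac38}$-powers in the three formulas. A related subtlety is that the one-sided choice $+i\varepsilon$ is what fixes the floor values in the first step (the opposite sign would shift some of them by $1$), and hence determines the precise floor-dependent prefactors.
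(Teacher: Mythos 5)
Your proposal follows exactly the route the paper takes: its proof consists of citing Proposition \ref{lemma} and Lemma \ref{lem51} and then simplifying via Lemma \ref{thetalemma} (2) and Lemma \ref{lemShift} (1) and (4) "after a lengthy calculation," which is precisely the substitution, degenerate-$\mu$ evaluation, and bookkeeping you describe. Your identification of which $\mu$-factor collapses in each case and of Lemma \ref{lemShift} (4) as the source of $T$ matches the intended argument, so this is essentially the same proof spelled out in more detail.
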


\begin{proof}
(1) We use Proposition \ref{lemma} and Lemma \ref{lem51} and simplify the occurring functions using Lemma \ref{thetalemma} (2), Lemma \ref{lemShift} (1) and (4) to conclude the statement after a lengthy calculation.

\noindent (2) The claim follows in a similar way.
\end{proof}

\section{Discussion and open questions}

For the enumeration of $N$-gons with $3\leq N\leq 5$, the explicit computations in the previous sections
exhibit nice formulas for the generating functions in terms of rational functions in the Jacobi theta functions and the $\mu$-function. Using the results in \cite{Zw} about the $\mu$-function,  this tells that the generating functions are actually
mock objects whose modular completions can be easily found.
One geometric consequence of the mock modularity is that the generating functions, originally defined
around $\tau=i\infty$, can be extended to the global moduli of elliptic curves upon modular completion.

The generating function of $6$-gons can be written down similarly according to the geometric construction
reviewed earlier in Section \ref{secgeometricconstruction}. It is essentially given by the following
\begin{align*}
&&f_5(\b z; \tau):=\sum_{\boldsymbol{n}\in \mathbb{Z}^4 \cap D }\sgn\left(n_1+\alpha_{1}-n_3-\alpha_{3}\right)
q^{ 3 n_1n_2-\frac{3}{2}n_3^2-\frac{3}{2}n_4^2} \zeta_1^{3n_2} \zeta_2^{3n_1} \zeta_3^{-3n_3} \zeta_4^{-3n_4},
\end{align*}
where the parameters $\alpha_{k},k\in\{1,2,3,4\}$ satisfy $ \lvert \alpha_3\rvert,\lvert \alpha_4\rvert\leq \min(\lvert \alpha_1\rvert,\lvert \alpha_2\rvert)$ and the region $D$ is
given by
\begin{align*}
D:=&-\b{\alpha}+\left\{\b x\in\R^4: \lvert x_3\rvert,\lvert x_4\rvert\leq \min(\lvert x_1\rvert,\lvert x_2\rvert) \text{ and }x_1x_2, x_1x_3,   x_1x_4\geq 0\right\}.
\end{align*}
Motivated by the studies on homological mirror symmetry \cite{Pol:20012}, we propose the following conjecture.
\begin{conjecture}
The generating function $f_5$ has mock Jacobi properties.
\end{conjecture}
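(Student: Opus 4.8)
The plan is to run the argument of Section \ref{pentagons} one depth higher. On the region $D$ the conditions $x_1x_2,x_1x_3,x_1x_4\ge0$ force $x_1,x_2,x_3,x_4$ to share a sign, and on that sector $\sgn(n_1+\alpha_1-n_3-\alpha_3)$ equals $+1$ on the positive part and $-1$ on the negative part; since those two parts are reflections of each other, for generic $\b z$ one can write $f_5(\b z)=g_5(\b z)-g_5(-\b z)$, where $g_5$ is the indefinite theta series attached to the signature-$(1,3)$ quadratic form $Q(\b n)=3n_1n_2-\tfrac32n_3^2-\tfrac32n_4^2$ summed over the cone $\{x_1,x_2\ge\max(x_3,x_4)\ge0,\ x_3,x_4\ge0\}$. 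This cone is not simplicial, so I would subdivide it according to whether $x_3\le x_4$ or $x_4\le x_3$ into two unimodular simplicial cones, each spanned by three null rays of $Q$ and one positive ray --- the ``higher Appell'' configuration one step beyond the two-null-ray cone appearing for $f_4$.

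On each simplicial piece I would next make a unimodular change of variables shifting $n_1,n_2$ by integer combinations of $n_3,n_4$ (as in the passage $n_1\mapsto n_1+n_3$, $n_2\mapsto n_2+n_3$ leading to \eqref{g4rep}), clear the $\b\alpha$-dependence through integer shifts governed by floor functions (the analogues of $Y_1,Y_2,Y_3$), exactly as in Proposition \ref{lemma}, and thereby identify $g_5$ with a specialization of a depth-three Appell-type function $F^{(3)}$: a threefold-nested Lerch-type sum generalizing the function $F$ of \eqref{defineF}, with two extra lattice directions in place of the single extra direction of $F^\ast$. Repeating the computation behind Lemma \ref{lem51} (Lemma \ref{2.3}, Lemma \ref{lemShift} (3), and the shift identities for $\mu$), I would then express $F^{(3)}$, and hence $g_5$, as an explicit $\Z[q^{\pm1},\zeta_j^{\pm1}]$-linear combination of terms $\vartheta\cdot\mu\cdot\mu\cdot\mu$, $(\text{theta quotient})\cdot\mu\cdot\mu$, and $(\text{theta quotient})\cdot\mu$ --- a genuine depth-three object, consistent with the expected weight $2$ (three halves from the $\mu$'s and a half from $\vartheta$) and index $\tfrac12\left(\begin{smallmatrix}0&3&0&0\\3&0&0&0\\0&0&-3&0\\0&0&0&-3\end{smallmatrix}\right)$.

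The crux is the modular completion. Replacing each $\mu$ by $\widehat\mu$ does not suffice: the product of three completed $\mu$-functions acquires non-modular cross terms supported on the codimension-one and codimension-two boundary rays of the simplicial cone. One must add the appropriate iterated error-function corrections --- the depth-three analogue of passing from $A$ to $\widehat A$, and of the precise combination used to form $\widehat F^\ast$ in Theorem \ref{theoremPart2} --- so that the completed $\widehat F^{(3)}$ satisfies Vign\'eras' differential equation and hence transforms as a (possibly vector-valued, multiplier) Jacobi form. I expect \emph{this} step to be the main obstacle: because the cone is not diagonal in a Gram basis of $Q$ and has three null boundary rays, one must either show that the genuine ``double integral'' corrections collapse to explicit terms of the form $(\text{theta quotient})\times\widehat\mu$, as happens one depth lower for $F^\ast$, or else control them directly; the results of \cite{Zw}, \cite{BR}, \cite{DMZ}, and the framework underlying \cite{BRZ} are the natural starting points.

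Granting the completion, the elliptic and modular laws of $\widehat f_5$ follow by transporting those of $\vartheta$ and $\widehat F^{(3)}$ through the shifted identity, exactly as in the proof of Theorem \ref{theoremPart2}, using that all transformation data are invariant under $\b z\mapsto-\b z$ so that the two reflected copies $g_5(\b z)$ and $g_5(-\b z)$ assemble into a form of the same weight, index, and level $\Gamma_0(3)\cap\Gamma(2)$. Finally, a jump analysis along the loci $y_3,y_4,y_1\pm y_3,\ldots\in\Z v$, parallel to Proposition \ref{g4limit} and Corollary \ref{cor1} and expected to express the discontinuities of $f_5$ through $f_4$ and ultimately through $f_3$, would serve as a strong consistency check and at the same time pin down the combinatorics of the cone subdivision.
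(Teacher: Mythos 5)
First, note that the statement you are addressing is left \emph{open} in the paper: it appears as a conjecture in the final section, and the authors offer no proof, only the remark that a direct identification in terms of Appell and theta functions ``seems to be difficult'' and that the theory of indefinite theta functions of arbitrary signature is the natural tool. So there is no proof in the paper to compare against, and the relevant question is whether your proposal itself constitutes a proof. It does not. What you have written is a plausible and well-informed strategy, and parts of it are genuinely useful: the reduction $f_5(\b{z})=g_5(\b{z})-g_5(-\b{z})$ for generic $\b{z}$, the subdivision of the cone $\{0\le x_3,x_4\le\min(x_1,x_2)\}$ along $x_3\lessgtr x_4$ into two unimodular simplicial cones, and the observation that each piece is spanned by three null rays and one positive ray of $Q(\b{n})=3n_1n_2-\tfrac32n_3^2-\tfrac32n_4^2$ (e.g.\ $(1,1,1,1),(1,0,0,0),(0,1,0,0)$ null and $(1,1,0,1)$ positive) are all correct and would be the right first moves, exactly parallel to the passage from \eqref{g4rep} to Proposition \ref{lemma}.

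The genuine gap is the one you yourself flag as ``the crux'': the construction of the modular completion of the depth-three Appell-type function $F^{(3)}$, and the verification that the completed object transforms as a Jacobi form. Everything downstream of that step (transporting the transformation laws as in the proof of Theorem \ref{theoremPart2}, the jump analysis parallel to Proposition \ref{g4limit}) is routine only \emph{once the completion exists}, and you give no construction of it --- you only name the two possible outcomes (collapse of the double error-function integrals to theta-times-$\widehat\mu$ terms, or direct control of them) without establishing either. This is not a minor technicality: the presence of three null boundary rays means the cones are degenerate for the standard signature-$(1,n)$ completion machinery (Zwegers-type error functions require positive rays; null rays are handled by limiting ``boundary'' terms whose interaction at depth three is precisely what is unknown), and it is exactly this difficulty that causes the paper to leave the statement as a conjecture. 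A correct write-up should present this as a proposed programme, not a proof, and should make explicit that the existence and shape of $\widehat F^{(3)}$ --- including whether the analogue of the clean formula for $\widehat F^{*}$ in terms of $\vartheta$, $\widehat\mu$, and theta quotients persists one depth higher --- is the open problem rather than a step to be ``granted.''
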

While directly identifying this generating function in terms of Appell functions and theta functions seems to be difficult, it should be possible to determine its mock Jacobi properties using the theory of indefinite theta functions of arbitrary signature. Such an approach could also enable progress on $N$-gons with arbitrary numbers of vertices $N\in\N$, which requires a more uniform geometric setup for $N$-gons.

%\bibliographystyle{amsalpha}
%\bibliography{references}

\end{document}